\newcommand{\ignore}[1]{}
\renewcommand{\Re}{\operatorname{Re}}
\renewcommand{\Im}{\operatorname{Im}}
\newcommand{\Orb}{\operatorname{Orb}}
\newcommand{\sing}{\operatorname{sing}}
\newcommand{\abs}[1]{\left\lvert {#1} \right\rvert}
\newcommand{\C}{{\mathbb{C}}}
\newcommand{\R}{{\mathbb{R}}}
\newcommand{\sF}{{\mathcal{F}}}
\newcommand{\sO}{{\mathcal{O}}}
\newtheorem{thm}{Theorem}[section]
\newtheorem*{thmnonum}{Theorem}
\newtheorem{prop}[thm]{Proposition}
\newtheorem{cor}[thm]{Corollary}
\newtheorem{lemma}[thm]{Lemma}
\theoremstyle{definition}
\newtheorem{defn}[thm]{Definition}
\theoremstyle{remark}
\author{Ji\v{r}\'{\i} Lebl}
\thanks{The author was in part supported by NSF grant DMS 0900885.}
\address{Department of Mathematics, University of California
at San Diego, La Jolla, CA 92093-0112, USA}
\curraddr{Department of Mathematics, University of Wisconsin, 
Madison, WI 53706, USA}
\email{jlebl@math.ucsd.edu}
\date{May 11, 2012}
\title{Singular set of a Levi-flat hypersurface is Levi-flat}
\begin{document}

%\doublespace

\begin{abstract}
We study the singular set of a singular Levi-flat real-analytic
hypersurface.  We prove that the singular set of such a hypersurface
is Levi-flat in the appropriate sense.
We also show that if the singular set is small enough, then
the Levi-foliation extends to a singular codimension one holomorphic
foliation of a neighborhood of the hypersurface.
\end{abstract}

\maketitle

%\enlargethispage{\baselineskip}

%%%%%%%%%%%%%%%%%%%%%%%%%%%%%%%%%%%%%%%%%%%%%%%%%%%%%%%%%%%%%%%%%%%%%%%%%%%%

\section{Introduction} \label{section:intro}

A real smooth hypersurface $H$ in a complex manifold
is said to be \emph{Levi-flat} if the Levi-form vanishes
identically, or in other words
if it is pseudoconvex from both sides.
Levi-flat hypersurfaces occur naturally,
for example as invariant sets of holomorphic foliations.
A real-analytic nonsingular Levi-flat hypersurface is
locally biholomorphic to a hypersurface of the form
$\{ \Im z_1 = 0 \}$, and is therefore foliated by
complex hypersurfaces (called the \emph{Levi-foliation}).
The definition of Levi-flat can be naturally
extended to CR submanifolds of higher codimensions by requiring that
the Levi-form vanishes identically.  
A real-analytic CR manifold is 
\emph{Levi-flat} in this sense
if in suitable local coordinates we can write its defining equations
as $\Im z_1 = \cdots = \Im z_j = 0$ and
$z_{j+1} = \cdots = z_{k} = 0$ for some $j$ and $k$ (where we interpret
$j=0$ and $j=k$ in the obvious sense).  With this terminology we
consider complex manifolds to be Levi-flat.

In this article, we consider singular Levi-flat real-analytic
subvarieties.
Local questions about singular Levi-flat
hypersurfaces have been previously studied
by Bedford~\cite{Bedford:flat}, Burns and Gong~\cite{burnsgong:flat},
Fern\'andez-P\'erez~\cite{FernPer:norm},
and the author~\cites{Lebl:lfnm, Lebl:thesis}.
Real-algebraic singular Levi-flat hypersurfaces in complex
projective space when written in homogeneous coordinates
are real-algebraic Levi-flat complex cones and hence their classification
is a local question as well,
see \cite{Lebl:projlf}.
A natural and well studied question is how to divide the projective space
into pseudoconvex domains.  A well known theorem of Lins
Neto~\cite{linsneto:note} says that a Levi-flat hypersurface in projective
space is necessarily singular in dimension 3 and higher.  We therefore
need to understand the singular set of Levi-flat hypersurfaces.
See the books~\cites{BER:book, Krantz:book, DAngelo:CR}
for the basic language and background.

Let $U \subset \C^N$ be an open subset and let $H \subset U$
be a (closed) real-analytic subvariety of real dimension $2N-1$.
For simplicity,
we use the term \emph{real-hypervariety} for $H$.
Let $H^*$ be the set of points of $H$ near which $H$ is a nonsingular
real-analytic hypersurface.
We say $H$ is \emph{Levi-flat} if $H^*$ is Levi-flat.
Let $H_s$ be the set of singular points of $H$, points near which
$H$ is not a real-analytic submanifold (of any dimension).  The relative topological
closure $\overline{H^*} \cap U$
is a semianalytic set (a set defined by equalities and inequalities,
see~\cite{BM:semisub}),
and is the natural
object to study.  The singular set $(\overline{H^*} \cap U)_s$ is defined as
above.  It is easy to see that $(\overline{H^*} \cap U)_s \subset H_s$.
If $H = \overline{H^*} \cap U$, then $H_s = (\overline{H^*} \cap U)_s$.
Our main result is the following theorem.  We show that the
result is optimal given the hypothesis.

\newcommand{\mainthmbody}{
Let $U \subset \C^N$ be an open set and let
$H \subset U$ be a (closed)
Levi-flat real-hypervariety.
Then the singular set
$(\overline{H^*} \cap U)_s$ is Levi-flat near points where it is a CR
real-analytic submanifold.

Furthermore, if $(\overline{H^*} \cap U)_s$ is a generic submanifold, then
$(\overline{H^*} \cap U)_s$ is a
generic Levi-flat submanifold of dimension $2N-2$.
}
\begin{thm} \label{mainthm}
\mainthmbody
\end{thm}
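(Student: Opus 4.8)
The plan is to reduce to a purely local statement and then exploit the complexification of a defining function together with the theory of Segre varieties, in the spirit of Bedford~\cite{Bedford:flat}, for which Levi-flatness of $H^*$ admits a clean reformulation. Since the assertion is local and biholomorphically invariant, I fix a point $p \in S := (\overline{H^*}\cap U)_s$ near which $S$ is a real-analytic CR submanifold, and choose a minimal real-analytic defining function $r$ for $M := \overline{H^*}\cap U$ on a small polydisc about $p$. Complexifying, I write $r(z) = R(z,\bar z)$ with $R(z,\zeta)$ holomorphic near $(p,\bar p)$ in $\C^N\times\C^N$ and satisfying the reality condition $R(z,\zeta) = \overline{R(\bar\zeta,\bar z)}$. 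The associated Segre varieties $\Sigma_w = \{\, z : R(z,\bar w)=0 \,\}$ are germs of complex hypervarieties depending holomorphically on $w$, and crucially they are defined for \emph{every} $w$, smooth point or not; this is the feature that lets me work across the singular locus.

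First I would record what Levi-flatness buys. On $H^*$ the Levi-foliation is by complex hypersurfaces, and for a real-analytic Levi-flat hypersurface the leaf through $w \in H^*$ is precisely the germ of $\Sigma_w$ at $w$; in particular $\Sigma_w \subset M$ for every $w \in H^*$. Since the $\Sigma_w$ vary holomorphically and $M$ is closed, letting $w \to q$ shows that every $q \in M$ — including every $q \in S$ — lies on a complex-analytic germ $\Sigma_q \subset M$ with $q \in \Sigma_q$. Thus near $p$ the set $M$ is a union of germs of complex hypervarieties, and $S$ is exactly the locus where this family of leaves degenerates: where the $\Sigma_q$ become singular, reducible, or mutually tangent.

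Next I would turn the degeneration of the Segre family into a foliation of $S$. The goal is to produce, through each CR point $q\in S$ near $p$, a complex submanifold $W_q \subset S$ with $\dim_\C W_q = \dim_{CR} S$ and $T_q W_q = T^{1,0}_q S$; the existence of such $W_q$ through every point is exactly the vanishing of the Levi form of $S$, i.e.\ Levi-flatness. These $W_q$ are to be extracted from the degeneracy locus of the Segre family — where $\Sigma_q$ splits or becomes singular, the appropriate component of its branch locus is a complex manifold of the correct dimension — and I must show that as $q$ ranges over $S$ these complex manifolds lie inside $S$ and assemble into its Levi-foliation. To control this I would pass to the complexification $\sM = \{R = 0\}\subset\C^N\times\C^N$, with its two projections and the antiholomorphic involution $\sigma(z,\zeta)=(\bar\zeta,\bar z)$, whose fixed locus recovers $M$ and whose real points cut out $S$ together with its candidate leaves; the degeneracy of the Segre map is read off from where the projection fails to be a submersion.

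The generic case is then dimension bookkeeping once Levi-flatness is in hand: $S$ has real codimension at least $2$, and if it is a generic CR submanifold its CR dimension is $N-2$, so the complex leaves just produced have complex dimension $N-2$, forcing $\dim_\R S = 2N-2$; being generic and Levi-flat, $S$ is then locally $\{\Im z_1 = \Im z_2 = 0\}$ by the definition of Levi-flat CR manifold. The hard part will be the third step: the Segre varieties genuinely degenerate over $S$, and I must rule out that the limiting complex leaves collapse to lower dimension or escape from $S$, and show that they fit together into a bona fide complex foliation. This is where the analysis of $\sM$, of the involution $\sigma$, and of the fibers of the two projections does the real work, and it is the technical heart of the argument.
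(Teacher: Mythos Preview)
Your proposal has a genuine gap at the decisive step. You correctly set up the Segre machinery and observe that through every $q\in M$ (including singular points) there passes a complex hypervariety contained in $M$; but this gives you complex leaves \emph{inside $M$}, not inside $S$. The whole difficulty of the theorem is to produce, through each CR point of $S$, a complex submanifold \emph{contained in $S$} of the correct dimension. Your sentence ``these $W_q$ are to be extracted from the degeneracy locus of the Segre family'' names the problem but does not solve it: you give no mechanism showing that this degeneracy locus lies in $S$, that it is a manifold, that it has complex dimension equal to $\dim_{CR}S$, or that it is tangent to $T^{1,0}_qS$. Moreover, your limiting argument ``letting $w\to q$ shows $\Sigma_q\subset M$'' ignores Segre-degenerate singularities, where $\Sigma_q=(\C^N,q)$; the paper spends real effort (Propositions~\ref{prop:degennm2}, \ref{prop:defclosed}, Lemma~\ref{dimnleaf:lemma}) controlling this. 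Finally, your ``dimension bookkeeping'' in the generic case is circular: you assert $\dim_{CR}S=N-2$, but for a generic submanifold this equals $N-\operatorname{codim}_\R S$, so you are presupposing $\operatorname{codim}_\R S=2$, which is exactly the conclusion.

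The paper's route is quite different from what you sketch. It splits into two cases. When $E=S$ is \emph{generic}, the argument (Lemma~\ref{lemma:genericsing}) picks a transversal curve $\gamma$ in $H^*$, applies Weierstrass preparation to $r(z,\overline{\gamma(t)})$ in $t$, and builds a discriminant-type function $\varphi(z,\bar z)=i^m\prod_{j,k}(e_j(z)-\overline{e_k(z)})$; this forces $H$, near a generic point of $E$, to sit inside a finite union of nonsingular Levi-flat hypersurfaces $\{\Im h_k=0\}$, and then an elementary intersection lemma (Lemma~\ref{lemma:interleviflat}) gives that $E$ is generic Levi-flat of dimension $2N-2$. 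When $E$ is \emph{not generic}, the paper shows (via CR-orbit arguments, Lemma~\ref{lemma:orbinsigma}) that one can build a generalized Hartogs figure inside a leaf $\Sigma'_p(H)$ avoiding $E$, then extends the Levi-foliation of $H^*$ to a singular holomorphic foliation $\sF$ of a full neighborhood using the envelope of meromorphy (Lemmas~\ref{lemma:extendfol}, \ref{lemma:folextendH}); finally one checks (Lemma~\ref{lemma:folandsing}) that $E$ coincides with $\sing(\sF)\cap\overline{H^*}^{\mathit{rel}}$, and intersecting with a leaf shows $\Orb_p(E)$ is complex. None of this is visible in your outline, and the foliation-extension step in particular is not something the Segre-family picture alone will give you.
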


A \emph{generic} real submanifold $M \subset \C^N$ is a submanifold 
with real defining equations $r_1(z,\bar{z}) = \cdots = r_k(z,\bar{z}) = 0$
such that
$\partial r_1, \ldots, \partial r_k$ are linearly independent.
Here
$\partial r = \sum \frac{\partial r}{\partial z_k} dz_k$ refers to the part
of the differential in the holomorphic variables.  In particular,
a generic submanifold is not contained in any proper complex variety.

The theorem is optimal in the sense that given simply the hypothesis that
$H^*$ is Levi-flat we cannot conclude that $H_s$ is Levi-flat; there could
be a lower dimensional component of $H$ which need not be Levi-flat.

Since a
semianalytic set is always contained in a real subvariety
of the same dimension,
the result also classifies singularities of
semianalytic Levi-flat hypersurfaces.

Burns and Gong~\cite{burnsgong:flat} construct many examples where the
singularity is a complex variety.  For example,
$\{ z \in \C^N : \Im (z_1^2+\cdots+z_k^2) = 0 \}$ is a Levi-flat
real-hypervariety with $\C^{N-k}$ as the singular set.

On the other hand, $\{ z : (\Im z_1)(\Im z_2) = 0 \}$
is a Levi-flat real-hypervariety with a generic Levi-flat singular set
$\{ z : \Im z_1 = \Im z_2 = 0 \}$.
It is possible to construct an irreducible
Levi-flat real-hypervariety with a generic singular set.
See Brunella~\cite{Brunella:lf} for an example.

We only study $(\overline{H^*} \cap U)_s$ near points where it is a 
real-analytic CR submanifold (a real-analytic submanifold is CR on an open
dense set).  It is possible that $(\overline{H^*} \cap U)_s$ is not a CR
submanifold.  For example the Levi-flat real-hypervariety
$\{ z : \bigl(\Re (z_2-z_1^2)\bigr)\bigl(\Im z_2\bigr) = 0 \}$
is a union of two
nonsingular Levi-flat hypersurfaces whose intersection is not a
CR submanifold at the origin.

As with
all real-analytic varieties, the singular set $H_s$
is not necessarily equal to
$H \setminus H^*$ even if $H$ is irreducible.  See~\cite{Lebl:projlf}
and~\cite{Brunella:lf} for examples of such Whitney-umbrella-type Levi-flat
hypervarieties.  The ``umbrella handle'' in those examples is
also generic Levi-flat or complex analytic.  The methods used in this
present article only give information on $\overline{H^*} \cap U$.
It is not known if an ``umbrella handle'' of an irreducible $H$ is
necessarily Levi-flat.
Note that while we know
that ${(\overline{H^*} \cap U)}_s \subset H_s$, the inclusion
could be proper even if the singular set is contained in
$\overline{H^*}$ as
points of $H_s$ may in fact be intersections of nonsingular points
of $\overline{H^*} \cap U$ with 
$H \setminus \overline{H^*}$.

Burns and Gong~\cite{burnsgong:flat}, and the
author~\cites{Lebl:lfnm,Lebl:projlf} also studied Levi-flat
real-hypervarieties defined
by $\Im f = 0$ for a holomorphic or a meromorphic function $f$.
Such Levi-flat real-hypervarieties have a complex analytic singular set,
but it turns out that not every Levi-flat hypersurface can be obtained
this way.
If a meromorphic function defined on a neighborhood of $H$
is constant on the leaves
of $H^*$, then 
the Levi-foliation extends to a possibly singular codimension one 
holomorphic foliation of a neighborhood of $H$.  That is,
near each point of $H$ there exists a nontrivial holomorphic 
one form $\omega$ that is completely integrable
($\omega \wedge d \omega = 0$)
and such that the leaves of the Levi-foliation of $H^*$
are integral manifolds of $\omega$ (tangent space of each
leaf is annihilated by $\omega$).
While near points of $H^*$, the foliation always extends, it is not
true that every Levi-flat real-hypervariety
is such that the foliation extends near singular points, even if $H$
is irreducible.  Brunella~\cite{Brunella:lf} proved that the foliation does
extend after lifting to the cotangent bundle.

In the proof of Theorem~\ref{mainthm} we must find sufficient conditions
for the foliation of $H^*$ to extend.  Besides proving what is needed
for Theorem~\ref{mainthm}, we have the following theorem that is
of independent interest.  Recently Cerveau and Lins
Neto~\cite{CerveauLinsNeto} proved a similar result.
By $H$ being \emph{leaf-degenerate} at $p \in H$,
we mean that there are infinitely many distinct 
germs of complex hypervarieties $(X,p) \subset (H,p)$,
see \S~\ref{section:ldeg} for a more precise definition.

\begin{thm} \label{thm:extsmallsing}
Let $U \subset \C^N$ be an open set and let
$H \subset U$ be a
Levi-flat real-hypervariety that
is irreducible as a germ at $p \in \overline{H^*} \cap U$.
If either
\begin{enumerate}[(i)]
\item $\dim H_s = 2N-4$ and $H$ is not leaf-degenerate at $p$, or
\item $\dim H_s < 2N-4$,
\end{enumerate}
then there exists a neighborhood $U'$ of $p$, and a nontrivial
holomorphic one form $\omega$ defined in $U'$, such that $\omega \wedge d
\omega  = 0$ and such that the leaves of the Levi-foliation
of $H^* \cap U'$ are integral submanifolds of $\omega$.
In other words, near $p$ the Levi-foliation extends to
a possibly singular
codimension one holomorphic foliation.
\end{thm}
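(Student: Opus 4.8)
The plan is to extend the Levi-foliation from the nonsingular part $H^*$ across the singular set by producing a single-valued holomorphic one-form. Near points of $H^*$ the foliation already exists, so the real task is to handle a neighborhood of $H_s$. The foliation on $H^*$ is locally given by the holomorphic tangent bundle of the leaves, and the obstruction to extending it is purely local: I must show that the germ of the foliation (equivalently, the germ of the defining one-form $\omega$, determined up to multiplication by a nonvanishing holomorphic function) extends holomorphically past the thin set $H_s$.

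\emph{First} I would reduce to a statement about extending $\omega$ across $H_s$, treating it as an analytic subset of small dimension in the ambient $U \subset \C^N$. The starting point is that on $H^*$ the Levi-foliation is a real-analytic foliation by complex hypersurfaces; by complexifying $H$ (passing to the complexification $H_\C \subset \C^N \times \C^N$, or equivalently extending the real-analytic defining function to a holomorphic function of $z$ and an auxiliary conjugate variable), the leaves organize into a genuine holomorphic foliation on a two-sided neighborhood of $H^*$. The defining one-form $\omega$ is then holomorphic and completely integrable on $U' \setminus S$, where $S$ is an analytic set containing (the complexification of) $H_s$ together with the zero set of whatever meromorphic normalization I use for $\omega$. \emph{Second}, I would normalize $\omega$ so that it is defined up to units and its indeterminacy locus is exactly controlled by the dimension of $H_s$: by hypothesis (i) or (ii), the relevant bad set $S$ has complex codimension at least two in the ambient space (since $\dim_\bR H_s \le 2N-4$ means $\dim_\bC S \le N-2$).

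\emph{Third}, and this is the analytic heart, I would invoke a Hartogs-type or Riemann-type extension theorem for the holomorphic one-form (or for its coefficient functions, after fixing a suitable local meromorphic frame): a holomorphic object defined outside an analytic set of codimension at least two, with appropriate boundedness, extends holomorphically across that set. Complete integrability $\omega \wedge d\omega = 0$ is a closed condition and so persists on the extension. In case (i), where $\dim H_s = 2N-4$ gives codimension exactly two, the non-leaf-degeneracy hypothesis is what guarantees the extension is unobstructed: leaf-degeneracy at $p$ would mean infinitely many germs of complex hypervarieties inside $(H,p)$, producing infinitely many candidate leaves through $p$ and hence an ambiguity in $\omega$ that cannot be resolved by a single holomorphic form; ruling this out forces the local leaf-space, and thus $\omega$, to be well-defined and extendable. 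In case (ii), the strict inequality $\dim H_s < 2N-4$ gives codimension at least three, which automatically excludes leaf-degeneracy (too little room for a continuum of complex hypersurfaces), so no extra hypothesis is needed.

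I expect the main obstacle to be the \emph{codimension-two borderline case} (i), where Hartogs extension does not apply for free and everything hinges on translating the geometric non-leaf-degeneracy condition into the precise analytic statement that guarantees a single-valued extension. The delicate points are: (a) showing that the normalized $\omega$ has no essential singularity or monodromy as one loops around $H_s$ — this is where finiteness of the germs of complex hypervarieties $(X,p)\subset(H,p)$ must be converted into the triviality of the relevant local leaf-space or into boundedness of $\omega$ near $H_s$; and (b) verifying that the extended form is genuinely nontrivial (does not vanish identically) and that its integral manifolds recover the original Levi-foliation on $H^*\cap U'$. The irreducibility of the germ $(H,p)$ should be used to ensure there is a single well-defined foliation to extend (rather than several on different branches), and to connect the two sides of $H^*$ through the complexification so that $\omega$ is unambiguous.
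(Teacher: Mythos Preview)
Your proposal has a genuine gap at the step where you claim the one-form $\omega$ is holomorphic on $U' \setminus S$ for some complex analytic set $S$ of codimension $\ge 2$. The foliation (and hence any defining $\omega$) is only produced on a \emph{tubular neighborhood} $\Omega$ of the real hypersurface $H^*$ in $\C^N$; the complement of such a neighborhood is a full open set, not a thin complex subvariety. The real dimension bound $\dim_\bR H_s \le 2N-4$ does not convert into the statement ``$\omega$ is defined off a complex analytic set of codimension $2$ in $\C^N$'', so Hartogs/Riemann extension cannot be invoked in the way you suggest. Passing to the complexification $H_\C \subset \C^{2N}$ does not fix this: the foliation you must extend lives in $\C^N$, and restricting an extended object on $H_\C$ back to the diagonal only recovers data along $H$, not on a full neighborhood of $p$.

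The paper's argument is structurally different and supplies precisely the missing geometric input. It first uses the existence of complex hypersurfaces (leaves) \emph{inside} $\overline{H^*}^{\mathit{rel}}$ through $p$ (Lemma~\ref{dimnleaf:lemma}). Non-leaf-degeneracy is then used not to resolve an ambiguity in $\omega$, but to produce a sequence of distinct nonsingular leaves $L_j \subset H^*$ clustering at $p$ with $p\notin L_j$; intersecting with a small ball gives a family of closed analytic discs in $\Omega$ whose boundaries stay in $\Omega$ while the discs themselves accumulate at $p$. Since the envelope of meromorphy of $\Omega$ is Stein (hence holomorphically convex), this forces $p$ to lie in that envelope. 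The extension of the foliation then comes from a separate lemma (Lemma~\ref{lemma:extendfol}) that pushes codimension-one holomorphic foliations to the envelope of meromorphy by writing the ratios of the coefficients of $\omega$ as global meromorphic functions. In case (ii) the paper simply observes (Lemma~\ref{lemma:sprimeprop}) that $\dim H_s < 2N-4$ already forces non-leaf-degeneracy, reducing to case (i). So the analytic heart is an envelope-of-meromorphy / analytic-disc argument fed by the leaf structure of $H$, not a Hartogs extension across a small set.
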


A primary tool in the proofs is Lemma~\ref{dimnleaf:lemma}, which
says that through every point $p \in \overline{H^*} \cap U$ for a Levi-flat
real-hypervariety $H$, there exists a complex hypersurface $W$
such that $W \subset \overline{H^*} \cap U$.  This $W$ is generally
a branch of the Segre variety of $H$ at $p$, unless the Segre variety
is degenerate at $p$.  This lemma also implies that all sides
of $\overline{H^*} \cap U$ are pseudoconvex.  Hence a real-hypervariety
$H$ is Levi-flat
if and only if $\overline{H^*} \cap U$ is pseudoconvex from all sides.

The author would like to acknowledge Peter Ebenfelt for suggesting the study
of this problem when the author was still a graduate student, and also for
many conversations on the topic.  The author would also like to thank
Xianghong Gong, John P.\ D'Angelo, Salah Baouendi, Linda Rothschild, and
Arturo Fern\'andez-P\'erez
for fruitful discussions on topics
related to this research and suggestions on the manuscript.

%%%%%%%%%%%%%%%%%%%%%%%%%%%%%%%%%%%%%%%%%%%%%%%%%%%%%%%%%%%%%%%%%%%%%%%%%%%%

\section{CR submanifolds}

Background for CR geometry is taken from the books
\cites{BER:book, Krantz:book, DAngelo:CR}.
For background on complex varieties
see the book \cite{Whitney:book}.

Let $M \subset \C^N$ be a real-analytic submanifold (not necessarily
closed) of dimension $n$.
We consider the complexified tangent space $\C \otimes T_p M$.
The tangent vectors of the form
$\sum_{j=1}^N a_j \frac{\partial}{\partial \bar{z}_j}$ tangent to $M$
are called the
\emph{CR vectors}.  If the space of CR vectors at $p$,
called $T^{0,1}_p M$, has
constant dimension at all points of $M$,
the submanifold is said to be a \emph{CR submanifold}.  The
complex dimension of $T^{0,1}_p M$ is called 
the \emph{CR dimension} of $M$.  If a
CR submanifold is not contained in a proper complex analytic subvariety,
it is a generic submanifold.  In fact a generic submanifold is automatically
CR.

Let $\Orb_p(M)$ denote the \emph{local CR orbit} of $M$ at $p$,
that is, the integral manifold of the distribution of CR vector fields
and all of their commutators.  For real-analytic $M$
the CR orbit is
guaranteed to exist by a theorem of Nagano,
and $\Orb_p(M)$  is
the germ of a CR submanifold of $M$ through $p$ of smallest
dimension that has the same CR dimension as $M$ (see \cite{BER:book}).

If $\Orb_p(M) = (M,p)$ as germs, then
$M$ is said to be \emph{minimal} at $p$.  If a real-analytic submanifold is
minimal at one point, then it is minimal outside a real-analytic subvariety
(again see \cite{BER:book}).

For a connected
real-analytic CR submanifold $M$, we find that $\Orb_p(M)$ attains
a maximal dimension for $p$ in a dense open subset of $M$.  Near a point
where the dimension of $\Orb_p(M)$ is maximal we have the following
well known theorem.

\begin{thm}[see Baouendi-Ebenfelt-Rothschild~\cite{BER:craut}]
\label{nicenormcoord}
Let $M \subset \C^N$ be a
real-analytic 
CR submanifold. % of codimension $d$.
Let $p \in M$ be such that $\Orb_p(M)$ is of maximal dimension.
Then there are coordinates
$(z,w,w',w'') \in \C^n \times \C^{d-q} \times \C^q \times \C^k = \C^N$,
vanishing at $p$,
where $k$ denotes the complex dimension of the intrinsic complexification of
$M$ near $p$, $d$ is the real codimension of $M$ in its intrinsic
complexification, and 
$q$ denotes the real codimension of $\Orb_p(M)$ in $M$,
such that near $p$ $M$ is defined by
\begin{equation}
\begin{aligned}
&\Im w=\varphi(z,\bar z,\Re w, \Re w'), \\
&\Im w'=0, \\
&w''=0,
\end{aligned}
\end{equation}
where $\varphi$ is a real valued real-analytic function with
$\varphi(z,0,s, s')\equiv0$.
Moreover,
the local CR orbit of the point
$(z,w,w',w'')=(0,0,s',0)$, for
$s' \in \R^q$, is given by 
\begin{equation}
\begin{aligned}
&\Im w=\varphi(z,\bar z,\Re w,s') , \\
& w'=s' , \\
& w''=0 .
\end{aligned}
\end{equation}
\end{thm}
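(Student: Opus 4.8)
The plan is to build the coordinates in three stages: first strip off the complex directions in which $M$ is degenerate, then put the remaining generic submanifold into imaginary-part normal form, and finally adapt that normal form to the CR-orbit foliation. To begin I would pass to the \emph{intrinsic complexification} $X$ of $M$ at $p$, the germ of the smallest complex submanifold containing $(M,p)$. Because $M$ is a real-analytic CR submanifold, $X$ exists and $M$ sits inside $X$ as a \emph{generic} real-analytic submanifold. Choosing holomorphic coordinates on $\C^N$ in which $X = \{w''=0\}$ with $w'' \in \C^k$ immediately produces the equations $w''=0$ and reduces the problem to a generic real-analytic submanifold $M \subset X$ of real codimension $d$ and CR dimension $n$.

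Next, since $M$ is generic, its defining functions $r_1,\dots,r_d$ satisfy that $\partial r_1,\dots,\partial r_d$ are linearly independent at $p$. Applying the implicit function theorem in the holomorphic variables I would split the coordinates of $X$ as $(z,\zeta)\in\C^n\times\C^d$, with $z$ spanning the CR directions, and solve the defining equations for the imaginary parts, obtaining $\Im\zeta=\psi(z,\bar z,\Re\zeta)$ with $\psi(0)=0$ and $d\psi(0)=0$. A further holomorphic change of the $\zeta$-variables absorbing the part of $\psi$ that is holomorphic in $z$ (the pluriharmonic terms) yields the normalization $\psi(z,0,s)\equiv 0$. This is the standard construction of normal coordinates for a generic submanifold.

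The main step is to make this normal form compatible with the CR orbits. By Nagano's theorem the orbits are the integral manifolds of the Lie algebra generated by the CR vector fields together with all their commutators, and since $p$ is a point where $\dim\Orb_p(M)$ is maximal this dimension is locally constant; hence the orbits foliate a neighborhood of $p$ in $M$ by a real-analytic foliation of codimension $q$. I would then show that this foliation can be straightened \emph{within} the normal coordinates: one relabels $q$ of the $\zeta$-directions as $w'\in\C^q$ (leaving $w\in\C^{d-q}$) so that the leaves are exactly the level sets $\{w'=\text{const}\}$, so that $w'$ restricts to a real coordinate on $M$, i.e. $\Im w'=0$ on $M$, and so that $\psi$ depends on $w'$ only through $\Re w'$. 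This produces the asserted equations $\Im w=\varphi(z,\bar z,\Re w,\Re w')$, $\Im w'=0$, $w''=0$, with $\varphi(z,0,s,s')\equiv 0$. I expect this to be the hard part: one must straighten the orbit foliation \emph{simultaneously} with the imaginary-part normal form, equivalently show that the conormal directions transverse to the orbit are precisely the holomorphic differentials that are already real on $M$ and so contribute no imaginary part to $\varphi$. Establishing this cleanly should require the finer Segre-set and minimality analysis of Baouendi--Ebenfelt--Rothschild rather than Nagano's theorem alone.

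Finally, to obtain the orbit description I would fix $w'=s'$ and $w''=0$ and consider the germ $M_{s'}=\{\Im w=\varphi(z,\bar z,\Re w,s'),\ w'=s',\ w''=0\}$. It lies in $M$, has the same CR dimension $n$ as $M$, and by construction carries no transverse orbit directions, so it is minimal, that is, it is its own CR orbit. By the characterization of $\Orb_p(M)$ as the smallest germ of a CR submanifold through the point having the same CR dimension as $M$, the germ $M_{s'}$ must coincide with the CR orbit of $(0,0,s',0)$, which is exactly the stated formula.
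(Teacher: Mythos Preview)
The paper does not give its own proof of this theorem: it is stated as a known result with the attribution ``see Baouendi--Ebenfelt--Rothschild~\cite{BER:craut}'' and is used throughout as a black box, so there is no argument in the paper to compare your proposal against. Your sketch follows the standard strategy of that reference (intrinsic complexification, normal coordinates for the resulting generic submanifold, then alignment with the orbit foliation), and you correctly flag the third step as the substantive one.

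One clarification worth making in your step~3: ``relabeling $q$ of the $\zeta$-directions'' understates what is needed. The reason one can choose \emph{holomorphic} coordinates $w'$ whose level sets cut out the orbits is that each CR orbit, having the same CR dimension as $M$, has an intrinsic complexification that is a complex submanifold of $X$ of codimension $q$; at a point of maximal orbit dimension these complexified orbits vary real-analytically and assemble into a nonsingular holomorphic foliation of $X$, which can then be straightened. Without this observation it is not clear why a real-analytic straightening of the orbit foliation on $M$ can be upgraded to a holomorphic change of coordinates on the ambient space, and hence why $\Im w'=0$ can be arranged simultaneously with the normal form $\varphi(z,0,s,s')\equiv 0$. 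Once that is in hand, your final paragraph identifying $M_{s'}$ with the orbit via the ``smallest CR submanifold of the same CR dimension'' characterization is exactly right.
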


A CR submanifold $M$ where $\Orb_p(M)$ is of maximal dimension
is Levi-flat if and only if $\Orb_p(M)$ is a complex manifold,
that is when $q=d$.  This definition is the same as in the introduction
and also includes complex manifolds.

Let $M$ be a CR submanifold.
A function $f \colon M \to \C$ such that $\bar{L} f = 0$ for
every $\bar{L} \in T^{0,1} M$ is called a \emph{CR function}.  For example,
a restriction to $M$
of a holomorphic function defined in a neighborhood of $M$ is a CR function.
On the other hand, if $M$ and $f$ are both real-analytic, then $f$
extends to holomorphic function defined on a neighborhood of $M$.

%%%%%%%%%%%%%%%%%%%%%%%%%%%%%%%%%%%%%%%%%%%%%%%%%%%%%%%%%%%%%%%%%%%%%%%%%%%%

\section{Segre varieties}

Let $U \subset \C^N$ be a connected open set,
and write ${}^*U = \{ z : \bar{z} \in U \}$.
Let $H \subset U$ be defined
by $r(z,\bar{z}) = 0$ and suppose that $r$ can be complexified (by
complexifying its power series)
as a function $r(z,\bar{w})$ on $U \times {}^*U$.  Let $p \in H$.

\begin{defn}
We write 
\begin{equation}
\Sigma_p(U,r) := \{ z \in U : r(z,\bar{p}) = 0 \} .
\end{equation}
We call $\Sigma_p(U,r)$ the \emph{Segre variety} of $H$ at $p$ with
respect to $r$.
\end{defn}

We need a short lemma that is proved in \cite{burnsgong:flat}
that says that a germ of a real-analytic function is irreducible
if and only if its complexification is irreducible.

\begin{lemma}
\label{lemma:irredlemma}
If $\rho$ is an irreducible germ of a real-analytic function near $0$
in $\C^N$, and $H := \{ z : \rho(z,\bar{z}) = 0 \}$ has dimension 
$2N-1$, then for any neighborhood $U$ of 0, there is a smaller neighborhood
$U' \subset U$ of 0, such that if $\hat{\rho}$ is any real-analytic function
on $U$
that vanishes on an open set of $H^* \cap U'$, then $\rho$ divides
$\hat{\rho}$ on $U'$.  Further, $\rho$ is irreducible as a germ of a holomorphic
function near origin in $\C^{2N}$.
\end{lemma}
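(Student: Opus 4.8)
The plan is to prove the second assertion first (irreducibility of the complexification) and then read off the divisibility statement from it. I would begin by complexifying: replace $\bar z$ by an independent variable $\chi$, so that $\rho$ becomes a holomorphic germ $\rho(z,\chi)$ at the origin of $\C^N\times\C^N=\C^{2N}$, and reality of $\rho$ becomes the identity $\rho=\rho^\ast$, where $F^\ast(z,\chi):=\overline{F(\bar\chi,\bar z)}$. This $\ast$ is a multiplicative, unit-preserving involution of $\sO_{2N}$ induced by the antiholomorphic involution $\sigma(z,\chi)=(\bar\chi,\bar z)$, whose fixed set is the maximally totally real diagonal $\Delta=\{(z,\bar z)\}$; writing $\iota(z)=(z,\bar z)$, the hypervariety corresponds to $H=\iota^{-1}(V)$ with $V=\{\rho(z,\chi)=0\}$.

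For irreducibility, I would factor $\rho(z,\chi)=\prod_i p_i$ into holomorphic irreducibles and let $\ast$ act on the factors. Since $\rho^\ast=\rho$, the involution $\ast$ permutes the $p_i$, and the product over each $\ast$-orbit is self-conjugate, hence (after a unit normalization) restricts on $\Delta$ to a real-valued real-analytic factor of $\rho(z,\bar z)$. If there were two or more orbits this would give a nontrivial real-analytic factorization of $\rho$, contradicting its irreducibility as a real germ; so there is a single orbit. A size-one orbit is exactly the desired conclusion. A size-two orbit would mean $\rho\sim p\,p^\ast$ with $\{p=0\}\neq\{p^\ast=0\}$, and since $\sigma$ fixes $\Delta$ pointwise while swapping these two branches, every real point of $V$ would lie in both, i.e. $\iota(H)\subset\{p=0\}\cap\{p^\ast=0\}$. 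The right-hand side is a complex variety of dimension at most $2N-2$, yet it would contain the totally real submanifold $\iota(H^\ast)$ of real dimension $2N-1$; as a complex variety containing a totally real $k$-dimensional submanifold has complex dimension at least $k$, this is impossible. Hence $\rho(z,\chi)$ is irreducible, so prime, and $V$ is irreducible.

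For divisibility, I would shrink $U$ to a $U'$ on which a representative $V\subset U'\times{}^\ast U'$ is irreducible and the analytic (R\"uckert) Nullstellensatz applies to the prime ideal $(\rho(z,\chi))$, and complexify $\hat\rho$ to $\hat\rho(z,\chi)$. Picking a point $q$ in the open piece of $H^\ast\cap U'$ on which $\hat\rho$ vanishes and with $d\rho(q)\neq0$ (dense, as the irreducible $\rho$ is reduced), $V$ is a smooth complex hypersurface of dimension $2N-1$ near $(q,\bar q)$, and $\iota$ carries a neighborhood of $q$ in $H^\ast$ onto a maximal totally real --- hence uniqueness --- submanifold of $V$. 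As $\hat\rho$ vanishes there, $\hat\rho(z,\chi)$ vanishes on an open subset of $V$, and by irreducibility of $V$ on all of $V$; the Nullstellensatz then yields $\hat\rho(z,\chi)=\rho(z,\chi)\,h(z,\chi)$ with $h$ holomorphic, and restricting to $\chi=\bar z$ gives $\hat\rho(z,\bar z)=\rho(z,\bar z)\,h(z,\bar z)$ with $h(z,\bar z)$ real-analytic, so $\rho\mid\hat\rho$ on $U'$.

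The main obstacle is the size-two-orbit case: ruling out a factorization of the complexification into a conjugate pair of distinct branches. This is precisely where the hypothesis $\dim H=2N-1$ must enter --- otherwise functions of the type $\rho\sim|p|^2$, whose real zero sets have codimension two, are not excluded --- and the cleanest way to use it is the total-reality dimension bound, namely that the totally real diagonal cannot fit a $(2N-1)$-dimensional real submanifold into a complex variety of dimension $2N-2$. The same total-reality principle underlies the uniqueness step in the divisibility argument.
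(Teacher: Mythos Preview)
The paper does not give its own proof of this lemma; it simply cites Burns--Gong~\cite{burnsgong:flat}. Your argument is the standard one and is essentially what appears there: complexify, let the involution $\ast$ act on the irreducible holomorphic factors, and use the hypothesis $\dim H = 2N-1$ together with the total reality of the diagonal embedding to exclude the conjugate-pair factorization $\rho \sim p\,p^\ast$. The divisibility statement then follows from irreducibility of the complexified zero set and the analytic Nullstellensatz, exactly as you outline.

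One small point worth tightening: when you reduce to ``a single orbit,'' you should also note that the exponent on that orbit is $1$. If $\rho = u\,p^k$ with $k\ge 2$ and $p^\ast \sim p$, your orbit count alone does not immediately give a contradiction; one needs the (easy) observation that $p$ can be unit-normalized to be $\ast$-fixed (a Hilbert~90--type step, which your phrase ``after a unit normalization'' gestures at), whence $\rho$ would factor nontrivially in the real ring. Similarly, a size-two orbit with exponent $a\ge 2$ gives $\rho \sim (p\,p^\ast)^a$, again reducible. With this remark your dichotomy---either $\rho$ is holomorphically irreducible or $\rho \sim p\,p^\ast$ with $p\not\sim p^\ast$---is fully justified, and the rest of your argument goes through.
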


The variety $\Sigma_p(U,r)$ depends on both $U$ and $r$.  However, it
is possible to talk uniquely about a germ $\Sigma_p(H)$ not depending
on $U$ and $r$.
First, we note that the ideal $I_p(H)$
of germs at $p$ of real-analytic functions vanishing on $H$
is generated by some real-analytic germ $r$.  Let us take
a small enough connected
neighborhood $U$ of $p$ such that $r$ complexifies
to $U \times {}^*U$ and such that $\{ r(z,\bar{p}) = 0 \}$ contains
only components passing through $p$.  If $\varphi$ is another real-analytic
germ defining the ideal $I_p(H)$, then $\varphi = \alpha r$
where $\alpha(p,\bar{p}) \not= 0$.  It is then easy to see that as
germs at $p$
we have $\{ r(z,\bar{p}) = 0 \} = \{ \varphi(z,\bar{p}) = 0 \}$.
Therefore there is a well defined germ of a complex variety at $p$.
Denote by $(\Sigma_p(U,r) , p)$ the germ of 
$\Sigma_p(U,r)$ at $p$.

\begin{defn}
Define the germ
$\Sigma_p(H)$ as the germ $(\Sigma_p(U,r) , p)$
for $U$ small enough and $r$ as given above.
\end{defn}

That is, for each point, we can pick a small enough neighborhood
and a defining function $r$ such that $\Sigma_p(H)$ is well defined.
We have proved above that for any $U$ and $r$ we have, as germs at $p$,
\begin{equation}
\Sigma_p(H) \subset ( \Sigma_p(U,r) , p) .
\end{equation}

The following proposition is classical and not hard to prove by
complexification.

\begin{prop} \label{prop:cplxinsegre}
Let $U \subset \C^N$ be an open set,
$S \subset U$ be a real-analytic subvariety.
Suppose that
$r \colon U \to \R$ is real-analytic, complexifies to $U \times {}^*U$,
vanishes on $S$, and suppose $W \subset U$
is a complex subvariety such that $W \subset S$.  Then for
$p \in W$ we have
$W \subset \Sigma_p(U,r)$.
\end{prop}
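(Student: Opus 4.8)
The plan is to pass to the complexification and reduce the statement to the vanishing of a single holomorphic function. Write $F(z,\zeta)$ for the holomorphic extension of $r$ to $U \times {}^*U$, so that $F(z,\bar z) = r(z,\bar z)$ and $F$ is holomorphic in both arguments. Let ${}^*W = \{ \zeta : \bar\zeta \in W \}$ denote the conjugate variety of $W$; it is again a complex subvariety of ${}^*U$, and $z \in W$ if and only if $\bar z \in {}^*W$. The desired conclusion $F(z,\bar p) = r(z,\bar p) = 0$ for all $z \in W$ (which is exactly $W \subset \Sigma_p(U,r)$) will follow at once, for every $p \in W$, once I show that $F$ vanishes identically on the complex variety $W \times {}^*W$.

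To prove that vanishing, I would first work near a smooth point. Let $k = \dim_\C W$ and parametrize $W$ near a regular point by a holomorphic map $z = \psi(t)$, for $t$ in an open subset of $\C^k$; then ${}^*W$ is parametrized by $\zeta = \bar\psi(s)$ (the map with conjugated coefficients), and $\overline{\psi(t)} = \bar\psi(\bar t)$. The composition $G(t,s) := F\bigl(\psi(t),\bar\psi(s)\bigr)$ is holomorphic on an open subset of $\C^k \times \C^k$. Since $\psi(t) \in W \subset S$ and $r$ vanishes on $S$, we get $G(t,\bar t) = F\bigl(\psi(t),\overline{\psi(t)}\bigr) = r\bigl(\psi(t),\overline{\psi(t)}\bigr) = 0$ for all $t$. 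The diagonal $\{(t,\bar t)\}$ is a maximally totally real subset of $\C^k \times \C^k$, so a holomorphic $G$ vanishing on it must vanish identically: expanding $G(t,s) = \sum c_{\alpha\beta}\, t^\alpha s^\beta$, the identity $\sum c_{\alpha\beta}\, t^\alpha \bar t^\beta \equiv 0$ and the linear independence of the functions $t^\alpha \bar t^\beta$ force every $c_{\alpha\beta} = 0$. Hence $F \equiv 0$ on an open subset of $W_{\mathrm{reg}} \times {}^*W_{\mathrm{reg}}$.

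Finally I would propagate this vanishing to all of $W \times {}^*W$. The regular locus of each irreducible component of a complex variety is a connected complex manifold that is dense in the component, so by the identity theorem $F$ vanishes on $W_{\mathrm{reg}} \times {}^*W_{\mathrm{reg}}$ and then, by continuity, on all of $W \times {}^*W$. Fixing the given $p \in W$ and noting $\bar p \in {}^*W$ yields $F(z,\bar p) = r(z,\bar p) = 0$ for every $z \in W$, as required. The only steps needing care are the totally-real vanishing argument and the passage across the singular locus of $W$; both are standard, and the essential content is simply the complexification/polarization argument, which is why the result is classical.
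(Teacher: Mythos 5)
The paper offers no written proof of this proposition --- it is dismissed as ``classical and not hard to prove by complexification'' --- and your argument is precisely the complexification/polarization proof being alluded to: parametrize $W$ near a regular point, observe that the holomorphic function $G(t,s)=F\bigl(\psi(t),\bar\psi(s)\bigr)$ vanishes on the maximally totally real diagonal $s=\bar t$, conclude $G\equiv 0$ by comparing Taylor coefficients, and then propagate by the identity theorem and continuity. For an \emph{irreducible} $W$ this is complete and correct, and the irreducible case is what the paper actually needs: the proposition is invoked for germs $(V,p)\subset (H,p)$, where every component passes through $p$ and one can argue component by component.

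The one step that does not survive scrutiny is the final propagation when $W$ is reducible. The identity theorem carries the vanishing of $F$ only through the connected component of $W_{reg}\times {}^*W_{reg}$ meeting the diagonal neighborhoods where vanishing was established; that is, you obtain $F\equiv 0$ on $W_i\times {}^*W_i$ for each irreducible component $W_i$, but nothing about the off-diagonal products $W_i\times {}^*W_j$ with $i\neq j$. This is not a removable technicality. Take $N=2$, $r(z,\bar z)=(\Im z_1)(\Im z_2-1)$, $S=\{r=0\}$, and $W=\{z_1=0\}\cup\{z_2=i\}\subset S$, a connected but reducible complex subvariety. For $p=(1,i)$ one computes $r(z,\bar p)=\tfrac{-1}{4}(z_1-1)(z_2-i)$, so $\Sigma_p(U,r)=\{z_1=1\}\cup\{z_2=i\}$, which does not contain the component $\{z_1=0\}$ of $W$. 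Hence both your claim that $F$ vanishes on all of $W\times {}^*W$ and the proposition's conclusion as literally stated fail in this case. The repair is simply to assume $W$ irreducible (or that $p$ lies on every irreducible component of $W$); under that hypothesis your proof is exactly right and is the intended classical argument.
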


%%%%%%%%%%%%%%%%%%%%%%%%%%%%%%%%%%%%%%%%%%%%%%%%%%%%%%%%%%%%%%%%%%%%%%%%%%%%

\section{Degenerate singularities}

In the sequel,
we say $H \subset \C^N$
is a \emph{local real-hypervariety} to mean that
$H$ is a closed subvariety of some open set $U \subset \C^N$.
Also instead of writing $\overline{H^*} \cap U$ we simply use
$\overline{H^*}^{\mathit{rel}}$ to mean the relative closure of $H^*$
in $U$ (or equivalently the closure in the subspace topology on $H$).

\begin{defn}
Let $H \subset \C^N$ be a Levi-flat local real-hypervariety.
A point $p \in H$ is said to be a \emph{Segre-degenerate singularity}
if $\Sigma_p(H)$ is of dimension $N$, that is, $\Sigma_p(H) = (\C^N,p)$.
\end{defn}

In other words, $p$ is a degenerate singularity of $H$ if
$z \mapsto r(z,\bar{p})$ is identically zero for every local
defining function of $H$ at $p$.

Suppose that $(V,p) \subset (H,p)$ is a germ of a complex subvariety.
By Proposition~\ref{prop:cplxinsegre}
$(V,p) \subset \Sigma_p(H)$.
As a nonsingular Levi-flat hypersurface contains a unique
nonsingular complex analytic hypersurface through every point,
we obtain the following well-known result.

\begin{prop} \label{prop:nonsingsigma}
Let $H \subset \C^N$ be a Levi-flat real analytic manifold
of dimension $2N-1$.  Then $\Sigma_p(H) \subset (H,p)$
for every $p$ and $\Sigma_p(H)$ is nonsingular.
\end{prop}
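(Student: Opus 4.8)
The plan is to reduce to the local normal form for a nonsingular real-analytic Levi-flat hypersurface and compute the Segre variety directly there. By the observation recalled in the introduction, near $p$ there are holomorphic coordinates $(z_1,\ldots,z_N)$ in which $H = \{\Im z_1 = 0\}$. The ideal $I_p(H)$ of a nonsingular hypervariety of dimension $2N-1$ is principal, and since $\Im z_1$ is an irreducible real-analytic germ vanishing on $H$, Lemma~\ref{lemma:irredlemma} shows it generates $I_p(H)$; any other generator differs from it by a nonvanishing unit. As the germ $\Sigma_p(H)$ is independent of the chosen generator, I may compute it using $r(z,\bar z) = \Im z_1 = \tfrac{1}{2i}(z_1 - \bar z_1)$.

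Complexifying gives $r(z,\bar w) = \tfrac{1}{2i}(z_1 - \bar w_1)$, so $\Sigma_p(U,r) = \{z : z_1 = \bar p_1\}$. Because $p \in H$ we have $\Im p_1 = 0$, hence $p_1 = \bar p_1 \in \R$, and therefore $\Sigma_p(H) = \{z : z_1 = p_1\}$ as germs at $p$. This is a nonsingular complex hyperplane, and since $z_1 = p_1 \in \R$ forces $\Im z_1 = 0$, it lies inside $H = \{\Im z_1 = 0\}$. Thus both assertions hold in these coordinates. To return to the original coordinates I would use that the Segre variety is a biholomorphic invariant: if $\Phi$ is the normalizing biholomorphism, a direct complexification computation shows $\Sigma_{\Phi(p)}(\Phi(H)) = \Phi(\Sigma_p(H))$, because evaluating the conjugated map at $\overline{\Phi(p)}$ returns $\bar p$. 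Since $\Phi$ is biholomorphic, nonsingularity and the inclusion $\Sigma_p(H) \subset (H,p)$ transfer back unchanged.

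Conceptually, the inclusion also follows without coordinates, which is the content of the remark preceding the statement: the leaf $W$ of the Levi-foliation through $p$ is a nonsingular complex hypervariety with $(W,p) \subset (H,p)$, so Proposition~\ref{prop:cplxinsegre} yields $W \subset \Sigma_p(U,r)$. As $H$ is nonsingular Levi-flat, $r(z,\bar p) \not\equiv 0$, so $\Sigma_p(U,r)$ has complex dimension $N-1$; since $W$ has the same dimension and is irreducible, $W$ is a component of $\Sigma_p(U,r)$.

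The only genuinely delicate point is ruling out extra branches of $\Sigma_p(U,r)$ through $p$, that is, upgrading the inclusion $W \subset \Sigma_p(U,r)$ to an equality $\Sigma_p(H) = W$ and thereby securing nonsingularity rather than mere containment of a smooth hypersurface. I expect this to be the main obstacle in a purely coordinate-free argument, and it is exactly what the normal-form computation settles directly, since there $\Sigma_p(U,r)$ is literally the single smooth hyperplane $\{z_1 = p_1\}$.
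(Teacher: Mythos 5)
Your proof is correct and amounts to the standard verification of the fact the paper simply labels ``well-known'': the paper's own justification is the remark preceding the statement (the leaf through $p$ lies in $\Sigma_p(H)$ by Proposition~\ref{prop:cplxinsegre}, plus uniqueness of the leaf), and your normal-form computation with $r=\Im z_1$ supplies exactly the missing reverse inclusion — that $\Sigma_p(H)$ has no branches beyond that leaf. You correctly identify this as the only delicate point, so your argument is a fleshed-out version of the same route rather than a genuinely different one.
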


The proposition implies that only singular points can be Segre-degenerate.
In fact the set of Segre-degenerate singularities must be small.

\begin{prop} \label{prop:degennm2}
Let $H \subset \C^N$ be a Levi-flat local real-hypervariety.
The set $S \subset H$ of Segre-degenerate singularities is contained
in a complex subvariety of (complex) dimension $N-2$ or less.
\end{prop}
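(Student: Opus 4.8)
The plan is to realize $S$ as a complex-analytic set by expanding the complexified defining function, and then to rule out a component of top dimension $N-1$ by a divisibility argument that exploits the irreducibility of the complexification. First I would work at a fixed point $p_0 \in S$ and, using that $I_{p_0}(H)$ is principal, take its generator $r$, complexified to $r(z,\bar w)$ on $U \times {}^*U$. Expanding in the holomorphic variable $z$, write $r(z,\bar w) = \sum_\alpha c_\alpha(\bar w)\, z^\alpha$, where each coefficient $c_\alpha$ is holomorphic on ${}^*U$. By definition $p$ is a Segre-degenerate singularity precisely when $r(z,\bar p) \equiv 0$ in $z$, i.e.\ when $c_\alpha(\bar p) = 0$ for every $\alpha$. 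Thus, setting $A := \{ \xi \in {}^*U : c_\alpha(\xi) = 0 \text{ for all } \alpha \}$, a complex-analytic subvariety of ${}^*U$, we have $S = \{ p : \bar p \in A \}$. Since complex conjugation carries complex-analytic sets to complex-analytic sets and preserves dimension, this already shows $S$ is complex-analytic with $\dim_\C S = \dim_\C A$; because $r \not\equiv 0$, some $c_\alpha \not\equiv 0$, so $A$ is a proper subvariety and \emph{a priori} $\dim_\C S \le N-1$.

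To improve the bound to $N-2$ I would reduce to the irreducible case and complexify. Decomposing $H$ into irreducible germs at $p_0$, it suffices to bound the corresponding piece of $S$ for each component, so assume $H$ is irreducible at $p_0$ and take $r$ to be an irreducible real-analytic generator. By Lemma~\ref{lemma:irredlemma} the complexification $r(z,\bar w)$ is then irreducible as a holomorphic germ in $\C^{2N}$; this irreducibility is the feature I want to use.

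The crux is to show $A$ has no component of dimension $N-1$. Suppose to the contrary that $\{ g = 0 \}$ is such a component, with $g$ an irreducible germ depending only on the variable $\bar w$. Every coefficient $c_\alpha$ vanishes on $\{ g = 0 \}$, so by the Nullstellensatz $g$ divides each $c_\alpha$, and therefore $g$ divides $r(z,\bar w)$ in the ring of holomorphic germs at $(p_0,\bar{p}_0)$. Irreducibility of $r$ then forces $r$ and $g$ to be associates, so the zero set $\{ r(z,\bar w) = 0 \}$ is independent of $z$; restricting to the diagonal $\bar w = \bar z$ exhibits $H$ near $p_0$ as the conjugate of the complex hypersurface $\{ g = 0 \}$, hence as a complex-analytic variety of real dimension $2N-2$. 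This contradicts the hypothesis that $H$ is a real-hypervariety of real dimension $2N-1$; it is exactly here that components of $H$ which happen to be complex hypersurfaces are excluded. Consequently every component of $A$, and hence of $S$, has complex dimension at most $N-2$.

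I expect the main obstacle to be the bookkeeping in the reduction to an irreducible germ and the clean passage from the divisibility statement in $\C^{2N}$ to the geometric conclusion on the diagonal. A one-line fiber-dimension count---$\C^N \times \{ g=0 \}$ is an irreducible subvariety of the irreducible hypersurface $\{ r=0 \}$ of the same dimension $2N-1$, hence equals it---gives an alternative route to the same contradiction should the divisibility argument prove delicate.
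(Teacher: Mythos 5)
Your proof is correct and follows essentially the same route as the paper's. Your set $A$ (after conjugation) is exactly the paper's $S_r = \bigcap_{q \in U} \Sigma_q(U,r)$ --- the common zero locus of all Taylor coefficients of $r(z,\bar{w})$ in the $z$ variable --- and both arguments rule out an $(N-1)$-dimensional branch of this set by a divisibility argument on the complexified defining function. The one genuine difference is the final step: the paper deduces $\abs{f}^2 \mid r(z,\bar{z})$ from a hypothetical branch $\{f=0\}$ and then simply \emph{assumes} $r$ admits no such factor, whereas you invoke Lemma~\ref{lemma:irredlemma} to get irreducibility of the complexification $r(z,\bar{w})$ in $\C^{2N}$ and conclude that $r$ would be an associate of $g(\bar{w})$, contradicting $\dim_{\R} H = 2N-1$. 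Your version is cleaner for an irreducible germ of full dimension, but it inherits the same caveat as the paper's proof: your reduction to irreducible components applies Lemma~\ref{lemma:irredlemma} to each piece, and that lemma requires the piece to have dimension $2N-1$. An irreducible component of $H$ at $p_0$ that happens to be a complex hypersurface consists entirely of Segre-degenerate points and would contribute an $(N-1)$-dimensional piece to $S$ with no contradiction available; both your proof and the paper's tacitly exclude this case rather than derive it from the stated hypotheses. Two small further remarks: your claim that $S$ \emph{equals} $\{p : \bar{p} \in A\}$ should be an inclusion $S \subset \{p : \bar{p} \in A\}$ (degeneracy at nearby points $q$ is defined via the generator of $I_q(H)$, not via $r$), but the inclusion is all you use; and the passage from $g \mid c_\alpha$ for all $\alpha$ to $g \mid r$ in $\sO_{2N}$ is most cleanly justified by noting that $r$ vanishes on the irreducible hypersurface $\C^N \times \{g=0\}$, whose ideal is generated by $g$.
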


\begin{proof}
Fix a point $p \in S$ and take a defining function $r$ for $H$
in some neighborhood $U$ of $p$.  Let us suppose that $r$ complexifies to
$U \times {}^*U$.  We can assume that $U$ is a polydisc.
As $r$ is real then $x \in \Sigma_y(U,r)$ implies
$y \in \Sigma_x(U,r)$.  Hence, $p \in \Sigma_q(U,r)$
for every $q \in U$.  Take the set
\begin{equation}
S_r = \bigcap_{q \in U} \Sigma_q(U,r) .
\end{equation}
As $\Sigma_q(U,r)$ must be proper subvariety for most $q$ (otherwise
$r$ would be identically zero),  $S_r$ is a proper complex subvariety
of $U$.  In fact we obtain that $S_r \subset H$ because if
$r(z,\bar{q})$ is zero for all $q$, then in particular 
$r(z,\bar{z}) = 0$.  Obviously we also have $S \subset S_r$.  We simply
need to show that $S_r$ must not be of dimension $N-1$.

Let us suppose that $S_r$ contains a branch $X$ of dimension $N-1$.
We assume that $p \in X$.
As $U$ is a polydisc we can choose a
a defining function $f$ for $X$ in $U$, such that
$f$ generates the ideal $I_U(X)$ of functions holomorphic on $U$
that vanish on $X$.
As we have that
\begin{equation}
f \mid r(\cdot,\bar{q}) \qquad \text{and} \qquad
\bar{f} \mid r(q,\bar{\cdot}) \qquad \text{for all $q \in U$},
\end{equation}
we get that $\abs{f(z)}^2$ divides $r(z,\bar{z})$.

We can assume that $r$ is not divisible by any $\abs{f(z)}^2$.  If
$H$ had a complex component we could replace the factor
$\abs{f(z)}^2$ by for example ${(\Re f(z))}^2 + {(\Im 2f(z))}^2$.
Therefore, $S_r$ must be of dimension $N-2$ or lower.
\end{proof}

The set of Segre-degenerate singularities is also closed.  In fact,
we have proved that
for every given defining function the set of Segre-degenerate singularities
with respect to that defining function must be a complex subvariety.

\begin{prop} \label{prop:defclosed}
Let $H \subset \C^N$ be a Levi-flat local real-hypervariety.
Then the set $S$ of Segre-degenerate singularities is closed.

In fact, when $r$ is a defining function for $H$ near $p$
that complexifies to $U \times {}^*U$ for some neighborhood
$U$ of $p$, then the set
\begin{equation}
S_r := \{ q \in U \colon \dim \Sigma_q(U,r) = N \}
\end{equation}
is a complex subvariety, and $S_r \subset H$.
\end{prop}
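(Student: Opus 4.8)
The plan is to prove the three assertions in turn: that $S_r$ is a complex subvariety, that $S_r \subset H$, and finally that $S$ is closed, the last following from a local identification $S \cap U = S_r$ for a suitable choice of $r$. I would establish that $S_r$ is a complex subvariety by exploiting the reality of $r$. Writing the complexification as a convergent power series $r(z,\bar w) = \sum_{\alpha,\beta} a_{\alpha\beta}\, z^\alpha \bar w^\beta$ on $U \times {}^*U$ (taking $U$ to be a polydisc), the condition that $r$ be real forces the Hermitian symmetry $a_{\alpha\beta} = \overline{a_{\beta\alpha}}$. Now $q \in S_r$ means precisely that $r(z,\bar q) \equiv 0$ as a function of $z$, i.e.\ that every coefficient $\sum_\beta a_{\alpha\beta}\bar q^\beta$ vanishes. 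By the Hermitian symmetry this coefficient equals $\overline{\sum_\beta a_{\beta\alpha} q^\beta}$, so it vanishes if and only if the \emph{holomorphic} function $h_\alpha(q) := \sum_\beta a_{\beta\alpha} q^\beta$ vanishes. Hence
\[
S_r = \{\, q \in U : h_\alpha(q) = 0 \text{ for all } \alpha \,\},
\]
a common zero set of holomorphic functions on $U$, which is a complex subvariety. (This is exactly the symmetry $x \in \Sigma_y(U,r) \iff y \in \Sigma_x(U,r)$ already used in the proof of Proposition~\ref{prop:degennm2}, now read with $q$ as the free variable.) The inclusion $S_r \subset H$ is then immediate: if $r(z,\bar q) \equiv 0$ then in particular $r(q,\bar q) = 0$, so $q \in H$.

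It remains to prove that $S$ is closed, and for this I would show that $S$ coincides with $S_r$ on a neighborhood of each point once $r$ is chosen to be a reduced defining function. Since $S \subset H$ and $H$ is closed, it is enough to work near a point $p \in H$. Take $r$ to be the reduced defining function generating the ideal $I_p(H)$, so that on a small polydisc $U$ it factors (up to a nonvanishing unit, which we discard by shrinking $U$) as $r = r_1 \cdots r_m$ into irreducible real-analytic factors; by Lemma~\ref{lemma:irredlemma} each $r_i$ complexifies to an irreducible holomorphic germ. For $q \in U$ we have $r(z,\bar q) = \prod_i r_i(z,\bar q)$, and since a product of holomorphic functions on a connected open set vanishes identically only if one of the factors does, the condition $\dim \Sigma_q(U,r) = N$ holds if and only if $r_i(z,\bar q) \equiv 0$ for some $i$. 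Setting $z = q$ shows any such $i$ satisfies $q \in H_i := \{r_i = 0\}$, while the factors with $q \notin H_i$ are units near $q$ and cannot vanish identically. Because the reduced generator of $I_q(H)$ is the product of the $r_i$ with $q \in H_i$, the very same criterion characterizes $\dim \Sigma_q(H) = N$. Therefore $S \cap U = S_r$, which is closed, and so $S$ is closed.

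The routine part is the power-series bookkeeping for $S_r$; the step I expect to require the most care is the local identification $S \cap U = S_r$, since the definition of Segre-degeneracy is germ-wise (through the generator of $I_q(H)$) while $S_r$ is defined by a single fixed global function $r$ that may vanish to higher order or be reducible. Passing to a reduced defining function and tracking which irreducible factors pass through $q$ is what reconciles the two, and one must verify that the factors not vanishing at $q$ are genuinely units there, so that they do not interfere with the identical-vanishing criterion.
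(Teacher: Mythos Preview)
Your treatment of $S_r$ is correct and is essentially the paper's argument unpacked in power-series language: writing $S_r$ as the common zero locus of the holomorphic functions $h_\alpha$ is exactly the same as writing $S_r = \bigcap_{w \in U} \Sigma_w(U,r)$, which is how the proof of Proposition~\ref{prop:degennm2} produced it. Both versions rest on the reality relation $r(z,\bar w) = \overline{r(w,\bar z)}$ to convert the anti-holomorphic condition ``$r(\cdot,\bar q)\equiv 0$'' into a holomorphic one in $q$. The paper's proof simply observes that the two descriptions of $S_r$ coincide and then quotes Proposition~\ref{prop:degennm2}.

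For ``$S$ is closed'' you do more than is needed, and the extra step has a genuine gap. You aim for the stronger identity $S\cap U = S_r$ via the assertion that $\prod_{i:\,q\in H_i} r_i$ generates $I_q(H)$. In the real-analytic category this is not automatic: even when $r$ generates $I_p(H)$, the germ of $r$ at a nearby $q$ need not generate $I_q(H)$ (at umbrella-handle points, for instance, $(H,q)$ drops dimension and $I_q(H)$ is not even principal). Writing $r = u\,\rho_q$ with $\rho_q$ a generator of $I_q(H)$, the case $u(\cdot,\bar q)\equiv 0$, $\rho_q(\cdot,\bar q)\not\equiv 0$ is precisely what you have not ruled out. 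Your closing paragraph flags the issue but does not resolve it.

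The good news is that the equality $S\cap U=S_r$ is unnecessary. The inclusion $S\cap U\subset S_r$ holds for \emph{any} defining function on $U$: if $q\in S$ and $\rho_q$ generates $I_q(H)$ then $\rho_q\mid r$ at $q$, so $r(\cdot,\bar q)=u(\cdot,\bar q)\,\rho_q(\cdot,\bar q)\equiv 0$. Now choose $r$ to generate $I_p(H)$; then by definition $p\in S$ iff $p\in S_r$. If $p\notin S$ then $p\notin S_r$, and since $S_r$ is closed and contains $S\cap U$, a whole neighborhood of $p$ misses $S$. This is the content the paper has in mind when it says the proposition ``follows at once from the proof of Proposition~\ref{prop:degennm2}.''
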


\begin{proof}
The proposition follows at once from the proof of
Proposition~\ref{prop:degennm2} once we notice that the
two definitions of the set $S_r$ agree.
\end{proof}

A useful corollary of this result is that if $p$ is
not a Segre-degenerate singularity then we can fix
a neighborhood $U$ of $p$ and a defining function $r$ such that
$H$ is not a Segre-degenerate singularity with respect to $r$ at
any point of $U$.

%%%%%%%%%%%%%%%%%%%%%%%%%%%%%%%%%%%%%%%%%%%%%%%%%%%%%%%%%%%%%%%%%%%%%%%%%%%%

\section{Leaves at singular points}

We need the following well known result.  See
Diederich and Forn\ae ss~\cite{DF:realbnd} (the claim in section 6).

\begin{lemma}[Diederich-Forn\ae ss] \label{DF:lemma}
Let $S \subset \C^N$ be a local real-analytic subvariety.  For every
$p \in S$, there exists a neighborhood $U$ of $p$ such that
for every $q \in U$ and every germ of a complex variety
$(V,q) \subset (S,q)$, there exists a (closed) complex subvariety $W \subset
U$ such that $(V,q) \subset (W,q)$ and such that $W \subset S \cap U$.
\end{lemma}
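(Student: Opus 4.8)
The plan is to pass to the complexification of $S$, to upgrade the containment of a single complex germ in $S$ to a statement about \emph{pairs} of points, and then to extract a closed complex subvariety by analytic continuation of the germ. First I would fix real-analytic defining functions $r_1,\dots,r_m$ for $S$ near $p$ and shrink to a polydisc $U$ on which every $r_j$ complexifies to a holomorphic function $r_j(z,\bar w)$ on $U\times{}^*U$. Write
\[
\sS = \{ (z,w)\in U\times{}^*U : r_j(z,\bar w)=0,\ j=1,\dots,m \}
\]
for the complexification, a complex subvariety; then $z\in S$ exactly when $(z,z)\in\sS$, while for fixed $y$ the slice $\{z:(z,y)\in\sS\}$ is the Segre variety $\Sigma_y(U,r)$.

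The central computation is the following: if $(V,q)\subset(S,q)$ is a germ of a complex variety, then in fact $r_j(x,\bar y)=0$ for all $x,y$ near $q$ on $V$, i.e.\ the germ $(V\times{}^*V,(q,q))$ lies in $\sS$. To see this I would parametrize a branch of $V$ holomorphically by $t\mapsto\varphi(t)$ and set $F(t,s)=r_j\bigl(\varphi(t),\overline{\varphi(s)}\bigr)$, which is holomorphic in $t$ and in $\bar s$, hence a genuine holomorphic function of the independent variables $(t,\bar s)$. Since $\varphi(t)\in V\subset S$, it vanishes on the diagonal $\{\bar s=\bar t\}$, a totally real submanifold whose real dimension equals the complex dimension of the ambient space and hence a uniqueness set for holomorphic functions; therefore $F\equiv0$. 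This is the step that genuinely uses that $V$ is complex, and it refines Proposition~\ref{prop:cplxinsegre} (which gives only $V\subset\Sigma_y(U,r)$ for $y\in V$) to both arguments simultaneously.

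Next I would produce $W$ by continuing the germ $(V,q)$ analytically to an honest complex subvariety $W$ of $U$ of the same dimension as $V$. On each irreducible branch, $W$ is connected and contains the germ $V$ as a nonempty open subset; each $r_j$ restricts to a real-analytic function on $W$ that vanishes on this open subset, so by the identity principle for real-analytic functions along a connected real-analytic variety $r_j\equiv0$ on $W$. Thus $W\subset S$ and $(V,q)\subset(W,q)$, as required.

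The hard part --- and the real content of Diederich--Forn\ae ss --- is \emph{uniformity}: producing a single neighborhood $U$ of $p$ that works for every $q\in U$ and every germ at once, together with the guarantee that the continuation $W$ is closed in $U$ and neither escapes to the boundary nor fragments into infinitely many branches. I would control this through the single coherent variety $\sS$. At a point $q$ that is not Segre-degenerate, each branch of $V$ is by Proposition~\ref{prop:cplxinsegre} a branch of the slice $\Sigma_q(U,r)=\{z:(z,q)\in\sS\}$, and the branches of slices of the fixed variety $\sS$ have dimension and number bounded uniformly once $U$ is chosen small enough that the component structure of $\sS$ over $U$ is stable; this uniform bound forces the continuation of $V$ to close up inside $U$. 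The Segre-degenerate points (where $z\mapsto r(z,\bar q)$ vanishes identically) are contained in a proper complex subvariety --- the intersection $\bigcap_{q}\Sigma_q(U,r)$, which is proper because $r\not\equiv0$ --- and there $V$ already continues to a closed complex subvariety by the previous paragraph, so a patching argument across this small set completes the construction. Extracting the uniform component bound for $\sS$ is where essentially all the difficulty lies; the preceding steps are soft.
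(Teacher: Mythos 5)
Your first two steps are sound and are exactly the standard opening: complexify the defining functions, and show by restricting to the antidiagonal (a maximally totally real, hence uniqueness, set for the holomorphic function $F(t,\bar s)=r_j(\varphi(t),\overline{\varphi(s)})$) that $r_j(x,\bar y)=0$ for all $x,y\in V$ near $q$. (Note the paper does not prove this lemma at all --- it cites Diederich--Forn\ae ss --- so the comparison is with the standard argument, which also appears almost verbatim inside the paper's proof of Lemma~\ref{F:lemma}.) The gap is in how you build $W$. You try to analytically continue $V$ itself to a closed subvariety of $U$ \emph{of the same dimension}, and you locate ``essentially all the difficulty'' in a uniform component bound for the complexification $\sS$. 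That is both more than the lemma asks for (nothing requires $\dim W=\dim V$) and not actually achievable by the mechanism you describe: your claim that ``each branch of $V$ is a branch of the slice $\Sigma_q(U,r)$'' is false whenever $\dim V$ is strictly less than the dimension of the Segre variety --- Proposition~\ref{prop:cplxinsegre} gives only containment, not that $V$ is an irreducible component --- and the lemma is stated for germs $V$ of arbitrary dimension. For such lower-dimensional germs (e.g.\ a germ of a curve inside a leaf of a Levi-flat hypersurface in $\C^3$) there is no uniform neighborhood in which the germ itself closes up, so the ``uniform component bound'' strategy cannot work. Your treatment of the Segre-degenerate locus is also circular: you appeal to ``the previous paragraph,'' which only asserted, and did not establish, that the continuation exists.

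The missing idea is the double intersection of Segre varieties, which makes the whole problem disappear: having shown $r(x,\bar y)=0$ for all $x,y\in V$, set $W'=\bigcap_{w\in V}\Sigma_w(U,r)$ and $W=\bigcap_{q\in W'}\Sigma_q(U,r)$. Each $\Sigma_w(U,r)$ is a closed complex subvariety of $U$ for the single fixed neighborhood $U$ on which $r$ complexifies, so $W'$ and $W$ are closed complex subvarieties of $U$ with no continuation, properness, or component-counting issues. The two-variable vanishing gives $V\subset W'$; reality of $r$ (i.e.\ $z\in\Sigma_w\Leftrightarrow w\in\Sigma_z$) then gives $V\subset W\subset W'$; and for $z\in W\subset W'$ one may take $q=z$ in the definition of $W$ to get $r(z,\bar z)=0$, i.e.\ $W\subset S$. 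This yields the lemma in one stroke, uniformly in $q$ and $V$, with no separate treatment of Segre-degenerate points. I recommend you compare with the proof of Lemma~\ref{F:lemma} in the paper, where exactly this construction is carried out with the cluster set $C$ playing the role of your $V$.
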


This lemma has an interesting and useful 
corollary that was pointed out to the author by Xianghong Gong.

\begin{cor} \label{cor:localcplxisglobalcplx}
If $X \subset \C^N$ is a local real-analytic subvariety such that
for every $p \in X_{reg}$ there exists a neighborhood $U$ of
$p$ such that $X \cap U$ is a complex manifold.  Then $X$ is a local
complex analytic subvariety.
\end{cor}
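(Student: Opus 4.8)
The plan is to reduce to a purely local statement at a point $p_0 \in X$ and then combine the Diederich--Forn\ae ss Lemma~\ref{DF:lemma} with the density and connectedness of the regular locus. Since the conclusion is local and a germ of a real-analytic variety has only finitely many irreducible components, I would fix $p_0 \in X$, shrink to a neighborhood $U$ supplied by Lemma~\ref{DF:lemma}, and arrange in addition that $X \cap U$ has finitely many irreducible components, each with connected regular locus.

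First I would observe that for every component $C$ and every point $q \in X_{reg} \cap C \cap U$ the germ $(X,q)$ is, by hypothesis, the germ of a complex submanifold, hence in particular a germ of a complex variety $(V,q) \subset (X,q)$. Applying Lemma~\ref{DF:lemma} to $S = X$ at such a $q$ produces a closed complex subvariety $W \subset U$ with $(V,q) \subset (W,q)$ and $W \subset X \cap U$. Because $W$ is a complex variety lying inside $X$, whose real dimension near $q$ equals twice the complex dimension of the manifold germ, $W$ has complex dimension exactly $m_C := \tfrac12 \dim_{\R} C$ at $q$, and near $q$ it therefore coincides with $X$.

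The heart of the argument is to propagate this coincidence across all of $C$. Fixing one point $q_C \in X_{reg}\cap C \cap U$ and the resulting subvariety $W_C$, I would consider the set of $q \in X_{reg}\cap C$ at which $(W_C,q) = (X,q)$ as germs. This set is nonempty and open. To see that it is closed in the connected manifold $X_{reg} \cap C$, take $q_1$ in its closure: since $W_C$ is closed we have $q_1 \in W_C$, and near $q_1$ the set $X$ is an $m_C$-dimensional complex manifold containing the complex subvariety $W_C$; as $W_C$ already has full dimension $m_C$ at points of the coincidence set accumulating to $q_1$, it must fill out the connected manifold $X$ near $q_1$, giving $(W_C,q_1)=(X,q_1)$. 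By connectedness of $X_{reg}\cap C$ the coincidence holds throughout, so $X_{reg}\cap C \subset W_C$, and taking closures in $U$ yields $C = \overline{X_{reg}\cap C} \subset W_C \subset X \cap U$.

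Finally, since the components are finite in number and $X\cap U = \bigcup_C C \subset \bigcup_C W_C \subset X\cap U$, we obtain $X \cap U = \bigcup_C W_C$, a finite union of closed complex subvarieties of $U$ and hence itself a closed complex subvariety. As $p_0$ was arbitrary, this shows $X$ is a local complex analytic subvariety, proving the corollary. I expect the main obstacle to be the closedness step in the third paragraph: it is precisely there that one must use that $X$ is a genuine complex manifold, and not merely a real-analytic one, near every regular point, together with the connectedness of the regular locus of an irreducible real-analytic germ, in order to prevent $W_C$ from dropping dimension or from failing to contain the limiting regular points.
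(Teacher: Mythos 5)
Your proposal is correct and follows essentially the same route as the paper, which disposes of the corollary in two lines by applying Lemma~\ref{DF:lemma} to the complex-manifold germs of $X$ at points of $X_{reg}$ to produce a closed complex subvariety $X'$ with $X_{reg} \subset X' \subset X$ and then invoking $\overline{X_{reg}}\cap U = X\cap U$; you are filling in the finiteness and propagation details that the paper leaves implicit. The only inaccuracy is the assumption that each irreducible component of a real-analytic germ has connected regular locus (false in general, e.g.\ the Whitney umbrella, whose regular locus has components of different dimensions), but this is harmless: running your open--closed argument on each of the finitely many connected components of $X_{reg}\cap U$ instead of on each irreducible component yields the same finite union of closed complex subvarieties.
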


By $X_{reg}$ we mean the set of points near
which $X$ is a real-analytic manifold (of any dimension).

\begin{proof}
Take $q \in X$ be a singular point.  By considering
the local complex subvariety $X_{reg}$ and appealing to
Lemma~\ref{DF:lemma}, there exists a small
neighborhood $U$ of $q$ and a complex subvariety $X' \subset U$
such that $X_{reg} \cap U \subset X' \subset X \cap U$.
As $\overline{X_{reg}}^{\mathit{rel}} = X$ we are done.
\end{proof}

We also need the following lemma of Forn\ae ss.
The proof is
given in \cite{kohn:subell}, Theorem 6.23.
The statement we need
is stronger, though more technical,
and follows from minor modification of the proof in \cite{kohn:subell}.
We reproduce the proof here with the
necessary modifications.

\begin{lemma}[Forn\ae ss] \label{F:lemma}
Let $S \subset \C^N$ be a local real-analytic subvariety.
Suppose that $W_k \subset S$ is a sequence of
local complex subvarieties with $\dim W_k \geq m$.
If $p \in S$ is a cluster point of this sequence, then
there exists a neighborhood $U$ of $p$,
a subsequence $\{ W_{k_j} \}$,
with $p$ still as a cluster point,
and
a complex subvariety $W \subset S \cap U$
with $\dim W \geq m$, such that
$W$ contains the set $C$ of the cluster points (in $U$) of 
$\{ W_{k_j} \cap U \}$.  Furthermore, no such subvariety $W$ of 
dimension less than $m$ exists.
\end{lemma}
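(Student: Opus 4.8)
The plan is to obtain $W$ as the limit, in the sense of convergence of analytic sets, of a suitable subsequence of the $W_k$; the argument then splits into a quantitative compactness estimate and a soft identification of the limit. I would begin by fixing a finite set of real-analytic generators $r_1,\dots,r_l$ of the ideal of $S$ near $p$, complexifying them on a polydisc $U \times {}^*U$, and using Lemma~\ref{DF:lemma} to arrange, after shrinking $U$, that each $W_k$ (replaced if necessary by the branch through a point $q_k \in W_k$ with $q_k \to p$) is a closed complex subvariety of $U$ contained in $S \cap U$. Passing to a subsequence I may assume the dimensions $\dim W_k = d$ are constant with $d \ge m$, and that each $W_k$ is pure $d$-dimensional near $q_k$.

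The heart of the proof is a volume bound for $\{W_k\}$ that is uniform in $k$ on a slightly smaller polydisc $U' \Subset U$. Here I would use Proposition~\ref{prop:cplxinsegre}: since each $q \in W_k$ lies in $S$, we have $W_k \subset \{z : r_i(z,\bar q)=0\}$ for every $i$ and every $q \in W_k$, so $W_k$ is contained in the common zero locus of $z \mapsto r_i(z,\bar q_k)$. Applying the Weierstrass preparation theorem to the complexifications $r_i(z,\bar w)$, uniformly for $\bar w$ near $\bar p$, bounds the number of sheets of a generic linear projection $\pi \colon \C^N \to \C^d$ restricted to this common zero locus, hence to $W_k$, by invariants of $S$ at $p$ that do not depend on $k$. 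By the Lelong--Bishop relation between sheet number and volume, this gives a bound on the $2d$-dimensional volume of $W_k \cap U'$ that is uniform in $k$; monotonicity of volume provides a matching lower bound near $p$, since every $W_k$ meets a fixed ball about $p$.

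With these bounds the classical compactness theorem for analytic sets (Bishop; see the proof of Theorem~6.23 in \cite{kohn:subell}) applies: a subsequence $W_{k_j}$ converges, both weakly as integration currents and in the Hausdorff sense on compact subsets of $U'$, to a pure $d$-dimensional complex subvariety $W \subset U'$. The uniform lower volume bound guarantees $W \neq \emptyset$ near $p$, so $\dim W = d \ge m$; since $S$ is closed and each $W_{k_j} \subset S$, the Hausdorff limit satisfies $W \subset S \cap U'$. By definition of Hausdorff convergence the limit set is precisely the cluster set $C$ of $\{W_{k_j}\cap U'\}$, so $W \supset C$ (indeed $W \cap U' = C$) and $p \in W$ remains a cluster point. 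The minimality clause is then immediate: $C = W$ has dimension $d \ge m$, so no complex subvariety of $S \cap U$ of dimension strictly less than $m$ can contain $C$.

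The step I expect to be the main obstacle is the uniform volume bound, and in particular making it survive the Segre-degenerate situation in which $r_i(\cdot,\bar p) \equiv 0$ for some (or all) generators, so that the corresponding Segre loci carry no information at $p$ itself. The remedy is to choose the projection direction $\pi$ generic with respect to the whole complexified ideal and to extract the controlling Weierstrass degree from generators whose complexifications remain nondegenerate for $\bar w$ near $\bar p$; tracking these degrees uniformly, together with the passage to a single limit $W$ capturing the full cluster set $C$, is exactly the technical strengthening of the argument in \cite{kohn:subell} that the statement advertises.
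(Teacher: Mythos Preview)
Your approach via Bishop compactness and uniform volume bounds is a standard and valid route to such limiting statements, but it is not the argument the paper gives, and the two differ in an instructive way.

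The paper proceeds without any volume estimate. It first extracts, by a diagonal selection, a countable dense subset $\{p^{(j)}\}$ of the cluster set $C$ together with approximants $p_k^{(j)} \in W_k$, and observes via Proposition~\ref{prop:cplxinsegre} that $r(p_k^{(n)}, \bar p_k^{(j)}) = 0$ for all $n,j,k$; passing to the limit and using density gives $r(z,\bar w)=0$ for all $z,w \in C$. It then \emph{defines} $W$ algebraically as an iterated Segre intersection,
\[
W' = \bigcap_{q \in C} \Sigma_q(U,r), \qquad W = \bigcap_{q \in W'} \Sigma_q(U,r),
\]
and checks directly from reality of $r$ that $C \subset W \subset S$. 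The bound $\dim W \ge m$ is established only at the end, by projecting the $W_k$ properly onto $W$ near a regular point of $W$ lying in $C$.

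The paper's construction is more elementary in that it avoids currents, Lelong numbers, and Bishop's theorem, and in particular it sidesteps the Segre-degenerate obstacle you correctly flag: no uniform control on the individual Segre varieties $\Sigma_{q_k}(U,r)$ is ever needed, only the qualitative fact that pairs of cluster points satisfy the complexified equation. Your route would yield a bit more---an honest Hausdorff limit, hence $W = C$ on compacta rather than merely $W \supset C$---but at the cost of the uniform sheet-number bound. In the principal-ideal case with $r(\cdot,\bar p)\equiv 0$ this bound genuinely requires more than you have sketched: the Weierstrass degree of $r(\cdot,\bar q_k)$ can blow up as $q_k \to p$, and ``choosing $\pi$ generic with respect to the whole complexified ideal'' does not by itself rescue this when there is a single generator. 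The fix is possible (one works in the product $U \times {}^*U$ and controls $W_k \times \overline{W_k}$ inside the fixed hypersurface $\{r(z,\bar w)=0\}$), but it is precisely the step the paper's Segre-intersection definition of $W$ is designed to avoid.
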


\begin{proof}
Let $U$ be a neighborhood
of $p$ such that all $W_k \cap U$ extend to a closed
complex subvariety of $U$ of dimension at least $m$, 
so assume that $W_k$ are closed subvarieties of $U$.
We can also assume that the defining equation 
$r(z,\bar{z})$ complexifies to $U \times {}^*U$.

Let $p^{(1)}$ be a cluster point of $\{ W_k \}$.
We pass to a subsequence to find $p_k^{(1)} \in W_k$ such that
$\lim p_k^{(1)} = p^{(1)}$.  We proceed inductively.
Let $C_n$ be the set of cluster points (in $U$)
of the sequence $\{ W_k \}$
at the $n$th step.  Let $d$ be the supremum of
the distance of a point
$q \in C_n$ to the set $P_n = \{ p^{(1)} , \ldots , p^{(n-1)} \}$.
We choose $p^{(n)}$ to be the point of $C_n$ that is of distance
at least $\frac{n}{n+1} d$ from $P_n$.  We again pass to
a subsequence of $\{ W_k \}$ and choose $p_k^{(n)} \in W_k$
such that $\lim p_k^{(n)} = p^{(n)}$.
Using diagonalization we obtain a subsequence $\{ W_k \}$ such
for each $j$ we have $p_k^{(j)} \in W_k$ and
$\lim p_k^{(j)} = p^{(j)}$.  The set $\{ p^{(j)} \}$ is dense
in the set $C$ of limit points of $\{ W_k \}$.
As $p_k^{(n)}, p_k^{(j)} \in W_k$ we have 
that $r(p_k^{(n)}, \bar{p}_k^{(j)}) = 0$ by
Proposition~\ref{prop:cplxinsegre}.  Taking limits and
using the density of $\{ p^{(j)} \}$ in $C$, we have
that $r(z,\bar{w}) = 0$ for all $z,w \in C$.
Define closed complex subvarieties $W', W \subset U$ by
\begin{equation}
W' := \bigcap_{q \in C} \Sigma_q(U,r) 
\qquad \text{and} \qquad
W := \bigcap_{q \in W'} \Sigma_q(U,r) .
\end{equation}
If $q \in W'$ and $c \in C$, then $r(q,\bar{c}) = 0$ and hence
by reality of $r$, $r(c,\bar{q}) = 0$.  Therefore
$C \subset W \subset W'$.  Furthermore $r(z,\bar{z}) = 0$
for all $z \in W$ and so $W \subset S$.

Let us show that $W$ must be of (complex) dimension at least $m$.
Suppose that $W$ is of dimension $d$.
Pick a point
$q \in C \cap W_{reg}$.  If such a point does not exist, we
then we have $C \subset W_s$ and we could have taken $W_s$ instead of $W$.
We can assume that in 
a small neighborhood of $q$ we have local holomorphic coordinates
such that $q$ is the origin and
$W$ is given by $z_{d+1} = \cdots z_N = 0$.  We can assume that
$W$ and $W_k$ are closed in a neighborhood of the closure of the
unit polydisc $\overline{\Delta}$.  We have that
for large $k$ we have that if $z \in W_k \cap \Delta$, then
$\abs{z_j} < \nicefrac{1}{2}$ for $j=d+1,\ldots,N$.
Therefore, the projection of
$W_k \cap \Delta$ onto $W \cap \Delta$ must be proper.  Hence $d \geq m$.
\end{proof}

We require the following result.
A somewhat weaker version of this fact
was proved in \cite{burnsgong:flat},
in the case the point is not a Segre-degenerate singularity,
and not concluding that 
$W \subset \overline{H^*}^{\mathit{rel}}$.  The conclusion
that $W \subset \overline{H^*}^{\mathit{rel}}$ was also proved for a special
case in \cite{Lebl:projlf}.

\begin{lemma} \label{dimnleaf:lemma}
Let $H \subset \C^N$ be a Levi-flat local real-hypervariety.
Suppose that $p \in \overline{H^*}^{\mathit{rel}}$, then there exists a
neighborhood $U$ of $p$ and an irreducible
complex subvariety $W \subset U$ of dimension $N-1$
such that $W \subset \overline{H^*}^{\mathit{rel}}$ and $p \in W$.
\end{lemma}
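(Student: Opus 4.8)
The plan is to realize $W$ as a limit of Levi leaves taken at nearby smooth points. Set $S := \overline{H^*}^{\mathit{rel}}$, a real-analytic subvariety of real dimension $2N-1$. If $p \in H^*$ the conclusion is immediate: by Proposition~\ref{prop:nonsingsigma} the Segre variety $\Sigma_p(H)$ is a nonsingular complex hypersurface contained in $(H,p)=(H^*,p)$, and a representative in a small polydisc furnishes the required irreducible $W$. So the substance lies in the case when $p$ is a singular point, where $\Sigma_p(H)$ itself need not lie in $H$ (and may even be Segre-degenerate).

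First I would fix the neighborhood $U$ of $p$ produced by the Diederich--Forn\ae ss Lemma~\ref{DF:lemma} for the subvariety $S$. Since $p \in S = \overline{H^*}^{\mathit{rel}}$, choose a sequence $q_k \in H^*$ with $q_k \to p$; for all large $k$ we have $q_k \in U$. At each smooth point $q_k$, Proposition~\ref{prop:nonsingsigma} applied to the manifold $H^*$ shows that the germ $\Sigma_{q_k}(H)$ is a nonsingular complex hypersurface germ lying in $(H^*,q_k) \subset (S,q_k)$. By Lemma~\ref{DF:lemma} this germ extends to a \emph{closed} complex subvariety $W_k \subset S \cap U$. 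Because $S$ has real dimension $2N-1$ it contains no complex subvariety of complex dimension $N$; as $W_k$ contains an $(N-1)$-dimensional germ, we conclude $\dim_{\C} W_k = N-1$ exactly.

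Next I would feed the sequence $\{W_k\}$ into the Forn\ae ss Lemma~\ref{F:lemma} with $m = N-1$. Since $q_k \in W_k$ and $q_k \to p$, the point $p$ is a cluster point of the sequence, so the lemma yields a subsequence and a closed complex subvariety $W \subset S \cap U'$ with $\dim_{\C} W \geq N-1$ whose cluster set $C$ contains $p$. The ambient real dimension again forces $\dim_{\C} W = N-1$. Finally, because the $(N-1)$-dimensional varieties $W_k$ accumulate at $p$ through the points $q_k \to p$, the point $p$ lies on a top-dimensional branch of $W$; replacing $W$ by that irreducible branch gives an irreducible complex subvariety of dimension $N-1$ with $p \in W \subset \overline{H^*}^{\mathit{rel}}$, as required.

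The main obstacle is the middle step: the Levi leaves are only defined as germs at the smooth points $q_k$, and to take a limit one must first extend them, uniformly over a single fixed neighborhood, to honest closed complex subvarieties sitting inside the real subvariety $S$, and then pass to a limit without losing dimension. Both difficulties are resolved by the cited results of Diederich--Forn\ae ss (uniform extension of the germs) and Forn\ae ss (dimension-preserving accumulation), with the real-dimension constraint $2N-1$ pinning the dimension to exactly $N-1$ at every stage.
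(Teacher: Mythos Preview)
Your outline follows the paper's opening moves exactly (Diederich--Forn\ae ss to extend the leaf germs, then the Forn\ae ss accumulation lemma), but it has a real gap at the end, and the gap is precisely the part that occupies more than half of the paper's proof.

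The set $S=\overline{H^*}^{\mathit{rel}}$ is in general only \emph{semianalytic}, not a real-analytic subvariety (the paper says this explicitly in the introduction; Whitney-umbrella-type Levi-flats give examples). Both Lemma~\ref{DF:lemma} and Lemma~\ref{F:lemma} are stated and proved for real-analytic subvarieties: their proofs complexify a defining function $r(z,\bar z)$, which a strictly semianalytic set does not have. So you cannot feed $S=\overline{H^*}^{\mathit{rel}}$ into them directly. If instead you apply them to $H$, as the paper does, then Lemma~\ref{F:lemma} only yields a complex hypersurface $W$ with $C\subset W\subset H$, not $W\subset\overline{H^*}^{\mathit{rel}}$. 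The $W$ produced in the proof of Lemma~\ref{F:lemma} is an intersection of Segre varieties of $r$ and is typically strictly larger than the cluster set $C$; in particular, an $(N-1)$-dimensional branch of $W$ through $p$ may well be a complex component of $H$ of real dimension $2N-2$ that lies outside $\overline{H^*}^{\mathit{rel}}$. Your final sentence (``replacing $W$ by that irreducible branch'') does not address why that branch sits in $\overline{H^*}^{\mathit{rel}}$.

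The paper closes this gap by proving that the cluster set $C$ is itself a complex hypersurface. It moves to a Segre-nondegenerate point of $C$ (Proposition~\ref{prop:degennm2} guarantees one exists), uses Weierstrass preparation on the defining function to realize the nearby $W_k$ as multigraphs over a fixed base, takes a normal-family limit to see that $C$ is locally a complex variety away from the Segre-degenerate locus, and then invokes Remmert--Stein to extend across that locus. Only then does one get an $(N-1)$-dimensional complex variety through $p$ that is genuinely contained in $\overline{H^*}^{\mathit{rel}}$. This extra step is the substantive content of the lemma beyond the Burns--Gong version, and it is exactly what your sketch is missing.
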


\begin{proof}
Let $U$ be a neighborhood of $p$ as in Lemma~\ref{DF:lemma}.

We take a sequence $q_k \to p$,
$q_k \in H^*$.  For each $q_k$ we apply Proposition~\ref{prop:nonsingsigma}
and Lemma~\ref{DF:lemma} to find a complex subvariety $W_k \subset U$
of dimension $N-1$ such that $q_k \in W_k$ and $W_k \subset H$.  We
can also assume that $W_k \subset \overline{H^*}^{\mathit{rel}}$.  That is because
there can be at most finitely many $W_k$ such that
$W_k \not\subset \overline{H^*}^{\mathit{rel}}$.  By Lemma~\ref{F:lemma}
we find a subsequence (calling it again $\{ W_k \}$) and
a complex subvariety $W \subset H$ of dimension $N-1$ that contains
all the cluster points of $\{ W_k \}$ in $U$.

If $H$ has no branch of dimension $2N-2$, which is a complex
variety at some point, then we are done.

The set
$C$ of cluster points of $\{ W_k \}$
is a subset of $\overline{H^*}^{\mathit{rel}}$.
By Lemma~\ref{F:lemma}, the set
$C$ of cluster points of $\{ W_k \}$
cannot be contained in the set $S$ of Segre-degenerate
singularities of $H$,
because $S$ would be contained in a complex subvariety of
dimension $N-2$ or less.

Let us move to a point $q \in C \setminus S$.
By Proposition~\ref{prop:cplxinsegre}
all germs of complex subvarieties of dimension $N-1$ through $q$
contained in $H$ must be
subsets (and hence branches) of the
Segre variety $\Sigma_q(U',r)$ for some neighborhood $U'$ of $q$,
which is a proper subvariety.
After perhaps a linear change of variables
we can assume that $U'$ is small enough such that
we can apply Weierstrass preparation theorem on $r$ with respect
the $z_N$ variable to obtain a new defining function $\tilde{r}$
\begin{equation}
\tilde{r}(z,\bar{z})
=
z_N^d + \sum_{j=0}^{d-1} p_j(z',\bar{z}',\bar{z}_N) z_N^j ,
\end{equation}
where we use the we use the notation $z = (z_1,\ldots,z_N) = (z',z_N)$.
We know that $W_k \cap U'$ are contained in $H$ and therefore
for a sequence $q^{(k)} \in W_k$
\begin{equation}
W_k \cap U' \subset
\Bigl\{ z \in U' : z_N^d + \sum_{j=0}^{d-1}
p_j\bigl(z',{\bar{q}^{(k)\prime}},\bar{q}^{(k)}_N\bigr) z_N^j
\Bigr\} = \Sigma_{q^{(k)}} (U',\tilde{r}) .
\end{equation}
That means that $W_k \cap U'$ is multigraph of the holomorphic
function $f_k \colon V' \to \C_{sym}^d$ for some neighborhood
$V' \subset \C^{N-1}$.
Here $\C_{sym}^d$ is the $d$th symmetric power and the multigraph
is the set $\{ (z,w) \colon w \in f_k(z) \}$.
For more information on complex varieties as
multigraphs of holomorphic mappings into the symmetric spaces
see \cite{Whitney:book}.

The functions $f_k$ are bounded and hence there exists a convergent
subsequence, these converge to some 
$f \colon V' \to \C_{sym}^d$.  Let us call $W'$
the multigraph of $f$.
As $W_k \cap U' \subset \overline{H^*}^{\mathit{rel}} \cap
U'$ then $W'$ is contained
in $\overline{H^*}^{\mathit{rel}} \cap U'$.  In fact the set of cluster points of $W_k \cap
U'$ is in fact $W'$ so $W' \supset C \cap U'$.
If $C \setminus W'$ is nonempty, we could repeat the procedure
to get another branch.  We only need to repeat the procedure
finitely many times as $W'$ is of dimension $N-1$ and therefore
$C$ cannot be contained a complex subvariety of larger dimension.
Therefore we can assume that $W' = C \cap U'$.

Hence $C \setminus S$ is a closed complex subvariety of
$U \setminus S$.  As $S$ is a subset of a complex variety
of dimension $N-2$, we can use the Remmert-Stein theorem to extend
$C$ to a closed convex subvariety $W'' = \overline{C \setminus
	S}^{\mathit{rel}} \subset
\overline{H^*}^{\mathit{rel}}$ of dimension $N-1$.  It is not hard
to see that $C = W''$ because $C$ is closed and all $W_k$ were subsets
of $\overline{H^*}^{\mathit{rel}}$.
\end{proof}

As we said in the introduction, the lemma gives an alternative
characterization of singular Levi-flat real-hypervarieties.  That is
a real-hypervariety
$H \subset U$ is Levi-flat if and only if all
the components of $U \setminus \overline{H^*}^{\mathit{rel}}$ are pseudoconvex.

The following corollary of Lemma~\ref{dimnleaf:lemma} was already proved
in \cite{burnsgong:flat} in the case that $H$ is not Segre-degenerate.

\begin{cor}
Suppose that $H \subset \C^N$ is a Levi-flat local real-hypervariety
that is reducible as a germ at $p \in \overline{H^*}^{\mathit{rel}}$ into two
distinct
germs of real-hypervarieties $(H_1,p)$ and $(H_2,p)$.  Then $H_s$ is of dimension
at least $2N-4$ and there exists a local complex subvariety $X$
of (complex) dimension $N-2$ such that
$X \subset (\overline{H^*}^{\mathit{rel}})_s \subset H_s$.
\end{cor}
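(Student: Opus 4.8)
The plan is to build the complex subvariety $X$ as the intersection of the limiting leaves of the two branches, with Lemma~\ref{dimnleaf:lemma} as the engine. First I would pass to a convenient reduction. Since $(H,p)$ is reducible we may, after relabeling, take $(H_1,p)$ and $(H_2,p)$ to be two distinct \emph{irreducible} components of $(H,p)$; it suffices to produce $X$ for this choice. For a single irreducible component $C$ of $(H,p)$, the regular points of $C$ that avoid the other components form a dense subset of $C$ lying in $H^*$, on which the Levi form vanishes; hence $C$ is itself a Levi-flat local real-hypervariety, $C \subset \overline{H^*}^{\mathit{rel}}$, and $p \in \overline{C^*}^{\mathit{rel}}$. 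Applying this to $H_1$ and $H_2$ shows that near $p$ one has $\overline{H^*}^{\mathit{rel}} = H$, so that $(\overline{H^*}^{\mathit{rel}})_s = H_s$ near $p$ and it is enough to place $X$ in $H_s$. Now Lemma~\ref{dimnleaf:lemma} applied to $H_1$ and to $H_2$ at $p$ yields irreducible complex subvarieties $W_1 \subset H_1$ and $W_2 \subset H_2$, each of dimension $N-1$, each contained in $\overline{H^*}^{\mathit{rel}}$, and each passing through $p$.

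Next I would split on whether the two leaves coincide. If $W_1 \neq W_2$, I set $X := W_1 \cap W_2$; since $W_1, W_2$ are distinct irreducible hypersurfaces and $p \in W_1 \cap W_2$, the set $X$ is nonempty and of pure dimension $N-2$. To see $X \subset H_s$, fix $x \in X$: the germs $(W_1,x)$ and $(W_2,x)$ must be distinct, for otherwise $X$ would have dimension $N-1$ at $x$. If $x$ were a regular point of $\overline{H^*}^{\mathit{rel}}$, then near $x$ it would be a nonsingular Levi-flat hypersurface, which by Proposition~\ref{prop:nonsingsigma} carries a \emph{unique} complex hypersurface through $x$; this contradicts the presence of the two distinct germs $(W_1,x),(W_2,x) \subset \overline{H^*}^{\mathit{rel}}$. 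Hence $x \in H_s$, and $X$ is the desired subvariety.

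The remaining case $W_1 = W_2 =: W$ is the main obstacle, since the uniqueness argument no longer applies directly: the two branches share their limiting leaf, and I must rule out that they silently coincide along $W$. Here I would instead show $W \subset H_s$ and then take $X$ to be any $(N-2)$-dimensional hyperplane section of $W$. Suppose some $x \in W$ were a regular point of $\overline{H^*}^{\mathit{rel}} = H$. Then $(H,x)$ is an irreducible germ of dimension $2N-1$; since $W \subset H_1 \cap H_2$, the equidimensional germs $(H_1,x)$ and $(H_2,x)$ both equal $(H,x)$, so $H_1$ and $H_2$ coincide on a neighborhood of $x$. By Lemma~\ref{lemma:irredlemma} the complexification of $H_1$ is irreducible as a germ at $p$, and $x$ lies on it within any neighborhood of $p$; the identity theorem on this irreducible complexification then forces $(H_1,p) \subset (H_2,p)$, and by symmetry $(H_1,p) = (H_2,p)$, contradicting their distinctness. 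Therefore $W \subset H_s$. In either case $X$ has complex dimension $N-2$, that is real dimension $2N-4$, and lies in $(\overline{H^*}^{\mathit{rel}})_s \subset H_s$; this simultaneously gives $\dim H_s \geq 2N-4$ and completes the argument.
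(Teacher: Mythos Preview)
Your proof is correct, and it rests on the same engine as the paper's---Lemma~\ref{dimnleaf:lemma} applied to each branch to produce the hypersurfaces $W_1,W_2$---but you take a more circuitous route to show that $W_1\cap W_2$ lies in the singular set.

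The paper avoids your case split entirely. It sets up $H_1$ irreducible and $H_2$ not containing $H_1$ (so $H_1^*\cap H_2^*$ is not of dimension $2N-1$), and then observes directly that $W_1\cap W_2\subset\overline{H_1^*}^{\mathit{rel}}\cap\overline{H_2^*}^{\mathit{rel}}\subset(\overline{H^*}^{\mathit{rel}})_s$: at any regular point $x$ of $\overline{H^*}^{\mathit{rel}}$ lying in both closures, each $\overline{H_j^*}^{\mathit{rel}}$ must locally coincide with the full manifold $\overline{H^*}^{\mathit{rel}}$, forcing $H_1^*\cap H_2^*$ to be $(2N-1)$-dimensional near $x$---a contradiction. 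This argument is indifferent to whether $W_1=W_2$ or not, and never invokes leaf uniqueness.

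Notice that your own Case~2 argument is essentially this same observation, just restricted to $W$ and phrased through Lemma~\ref{lemma:irredlemma}: you show that at a regular point of $H$ on $W\subset H_1\cap H_2$, the two branches locally coincide, contradicting their distinctness as irreducible germs at $p$. That reasoning applies verbatim to any point of $W_1\cap W_2\subset H_1\cap H_2$, whether or not $W_1=W_2$. So your Case~1, with its appeal to Proposition~\ref{prop:nonsingsigma} and the uniqueness of leaves, is redundant: the branch-coincidence argument already does all the work. What you gain from the split is nothing; what the paper gains from avoiding it is a two-line proof.
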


\begin{proof}
We can assume that we are working in a neighborhood $U$ of $p$
such that $H_1$, $H_2$, and $H$ are closed subvarieties of $U$.
Let us assume that $H_1$ is irreducible as a germ at $p$ and
assume that $H_2$ does not have $H_1$ as one of its branches.  We can
thus assume that $H_1^*$ and $H_2^*$ do not meet on a set of
dimension $2N-1$.
There must exist two irreducible complex hypervarieties
$W_1 \subset \overline{H_1^*}^{\mathit{rel}}$ and
$W_2 \subset \overline{H_2^*}^{\mathit{rel}}$ through $p$ by Lemma~\ref{dimnleaf:lemma}.
As both $W_1$ and $W_2$ are in the closure of $H_1^*$ and $H_2^*$
and since $H_1^* \cap H_2^*$ is not of dimension $2N-1$
it must be that $W_1 \cap W_2$ lies in the singularity
$(\overline{H^*}^{\mathit{rel}})_s$.
As $W_1 \cap W_2$ must be of (complex) dimension at least $N-2$, the
corollary follows.
\end{proof}

%%%%%%%%%%%%%%%%%%%%%%%%%%%%%%%%%%%%%%%%%%%%%%%%%%%%%%%%%%%%%%%%%%%%%%%%%%%%

\section{Leaf-degenerate points} \label{section:ldeg}

\begin{defn}
Let $H \subset \C^N$ be a Levi-flat local real-hypervariety.
For $p \in H$,
Lemma~\ref{DF:lemma} implies that there exists
a neighborhood $U$ of $p$ such that each germ of a complex
subvariety $(V,p) \subset (H,p)$ extends to a (closed) subvariety of $U$.
Hence define
$\Sigma'_p(H)$ as the germ at $p$ of the union of complex
subvarieties $V$ of $U$ of (complex) dimension $N-1$ such that
$V \subset H \cap U$.

If $p \in H$ is such that $\Sigma'_p(H)$ is not a 
complex variety of dimension $N-1$ then we say that $p$
is a \emph{leaf-degenerate point}.
\end{defn}

We show that the above definition of leaf-degenerate points
is equivalent to the definition from the introduction.

\begin{lemma} \label{lemma:sprimeprop}
Let $H \subset \C^N$ be a Levi-flat local real-hypervariety.
If $p \in \overline{H^*}^{\mathit{rel}}$ then $\Sigma_p'(H)$ is nonempty
and in fact contains a local complex hypervariety $W$
such that $p \in W \subset \overline{H^*}^{\mathit{rel}}$.

Furthermore, if $p \in \overline{H^*}^{\mathit{rel}}$ is a leaf-degenerate point
then $p$ is a Segre-degenerate singularity, and
$\dim (\overline{H^*}^{\mathit{rel}})_s \geq 2N-4$.
\end{lemma}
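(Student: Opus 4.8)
The plan is to read off the first assertion directly from Lemma~\ref{dimnleaf:lemma}, to obtain Segre-degeneracy by a branch-counting argument, and to get the dimension bound by intersecting two leaves that lie in $\overline{H^*}^{\mathit{rel}}$ and pass through $p$.

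For the first statement I would simply invoke Lemma~\ref{dimnleaf:lemma}: since $p \in \overline{H^*}^{\mathit{rel}}$, there is a neighborhood $U$ and an irreducible complex hypervariety $W \subset U$ of dimension $N-1$ with $p \in W \subset \overline{H^*}^{\mathit{rel}} \subset H$. Such a $W$ is one of the subvarieties in the union defining $\Sigma'_p(H)$, so $\Sigma'_p(H)$ is nonempty, contains $W$, and $W \subset \overline{H^*}^{\mathit{rel}}$, as claimed.

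For Segre-degeneracy, suppose $p$ is leaf-degenerate, so $\Sigma'_p(H)$ is not an $(N-1)$-dimensional complex variety. A finite union of irreducible $(N-1)$-dimensional complex subvarieties is again an $(N-1)$-dimensional complex variety, so, together with the previous paragraph, there must be infinitely many distinct irreducible $(N-1)$-dimensional complex subvarieties $V_\alpha$ with $p \in V_\alpha \subset H$. By Proposition~\ref{prop:cplxinsegre} each $V_\alpha$ is contained in $\Sigma_p(U,r)$ for a defining function $r$. If $p$ were not Segre-degenerate, then $\Sigma_p(U,r)$ would be a proper complex subvariety, hence of dimension at most $N-1$ with only finitely many irreducible components of dimension $N-1$; each $V_\alpha$, being irreducible of dimension $N-1$, would then coincide with one of these finitely many components, contradicting that the $V_\alpha$ are infinitely many and distinct. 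Hence $\Sigma_p(U,r)=U$ near $p$, i.e.\ $p$ is a Segre-degenerate singularity.

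For the dimension bound the plan is to produce two distinct complex hypervarieties $V_1,V_2 \subset \overline{H^*}^{\mathit{rel}}$ through $p$ and intersect them. The device is the reality of $r$: since $\Sigma_p(U,r)=U$ we have $q \in \Sigma_p(U,r)$ for all $q$, and the symmetry $q\in\Sigma_p(U,r) \iff p\in\Sigma_q(U,r)$ used in the proof of Proposition~\ref{prop:degennm2} then gives $p \in \Sigma_q(U,r)$ for every $q \in U$. Thus for $q \in H^*$ near $p$ the Levi leaf through $q$---which by Proposition~\ref{prop:nonsingsigma} is a branch of $\Sigma_q$ and extends to a hypervariety $W_q \subset \overline{H^*}^{\mathit{rel}}$ by Lemma~\ref{dimnleaf:lemma}---passes through $p$, and leaf-degeneracy forces infinitely many of these $W_q$ to be distinct. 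Choosing $V_1 \neq V_2$ among them, every component of $V_1 \cap V_2$ through $p$ has dimension at least $(N-1)+(N-1)-N = N-2$ by the intersection-dimension theorem. At a generic smooth point $q$ of $V_1 \cap V_2$, where $V_1$ and $V_2$ are distinct smooth hypersurface germs, $\overline{H^*}^{\mathit{rel}}$ contains two distinct germs of complex hypersurfaces; since a nonsingular Levi-flat hypersurface carries a unique complex hypersurface through each point and smooth points of $\overline{H^*}^{\mathit{rel}}$ are Levi-flat (being limits of points of $H^*$), $\overline{H^*}^{\mathit{rel}}$ must be singular at $q$. Hence $(\overline{H^*}^{\mathit{rel}})_s$ contains the generic part of $V_1 \cap V_2$, and $\dim (\overline{H^*}^{\mathit{rel}})_s \geq 2N-4$.

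The step I expect to be the main obstacle is guaranteeing that the two intersecting hypervarieties actually lie in $\overline{H^*}^{\mathit{rel}}$ and genuinely pass through $p$: the relation $p \in \Sigma_q(U,r)$ only places $p$ on the Segre variety of $q$, and one must verify that $p$ lies on the particular branch that is the leaf $W_q$ through $q$ (equivalently, that the infinitely many leaves in $\overline{H^*}^{\mathit{rel}}$ witnessing leaf-degeneracy can be taken through $p$), and that at least two of them are distinct. I would handle this by working on the locus of smooth points $q \in H^*$, where $\Sigma_q(H)$ is a single nonsingular hypervariety by Proposition~\ref{prop:nonsingsigma}, shrinking $U$ so that $\Sigma_q(U,r)$ retains only the component through $q$, so that $p \in \Sigma_q(U,r)$ indeed forces $p \in W_q$; this is precisely where the distinction between $H_s$ and $(\overline{H^*}^{\mathit{rel}})_s$ warned about in the introduction must be managed.
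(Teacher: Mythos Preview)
Your first two parts---nonemptiness via Lemma~\ref{dimnleaf:lemma} and Segre-degeneracy via the finite-branch count against Proposition~\ref{prop:cplxinsegre}---match the paper's proof.

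For the dimension bound you take an unnecessary detour through Segre reality, and the obstacle you flag is real; your proposed fix does not close it. From $p \in \Sigma_q(U,r)$ you cannot conclude $p \in W_q$: the leaf $W_q$ is only one branch of $\Sigma_q(U,r)$, and $p$ could lie on another. Your fix---shrink $U$ so that $\Sigma_q(U,r)$ retains only the branch through $q$---requires shrinking around $q$, but then $p$ may fall outside the shrunken neighborhood and the relation $p \in \Sigma_q(U,r)$ no longer applies. There is no uniform choice of $U$ that works for all $q$ near $p$ simultaneously.

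The paper sidesteps this entirely. The hypervarieties making up $\Sigma'_p(H)$ are already, by definition, germs of $(N-1)$-dimensional complex subvarieties through $p$ contained in $H$; the only question is whether two of them can be chosen inside $\overline{H^*}^{\mathit{rel}}$. The paper observes that an irreducible complex hypervariety $V \subset H$ not contained in $\overline{H^*}^{\mathit{rel}}$ must lie in one of the finitely many components of $H$ through $p$ of real dimension less than $2N-1$. Hence among the infinitely many hypervarieties in $\Sigma'_p(H)$, all but finitely many are already contained in $\overline{H^*}^{\mathit{rel}}$. Pick $V_1,V_2$ among these with no common component; then your concluding paragraph (intersection dimension $\geq N-2$, uniqueness of leaves at smooth points forcing $V_1\cap V_2 \subset (\overline{H^*}^{\mathit{rel}})_s$) goes through verbatim.
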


\begin{proof}
Lemma~\ref{dimnleaf:lemma} says that $\Sigma_p'(H)$ is nonempty
and contains a complex hypervariety $W \subset \overline{H^*}^{\mathit{rel}}$.
By Proposition~\ref{prop:cplxinsegre} we have
$\Sigma_p'(H) \subset \Sigma_p(H)$.  Thus if $p$ is leaf-degenerate,
$\Sigma_p'(H)$ must contain infinitely many distinct complex
subvarieties of dimension $N-1$, and therefore $\Sigma_p(H)$ must be
open.

As $\Sigma_p'(H)$ is a union of infinitely many germs of
complex subvarieties of dimension $N-1$, suppose that
$V_1$ and $V_2$ are two such subvarieties with no component in common.
As there are infinitely many such subvarieties in $\Sigma'_p(H)$,
and only finitely many complex subvarieties can contain
points of $H \setminus \overline{H^*}^{\mathit{rel}}$ ($H$ can have at most finitely
many components through $p$ of dimension less than $2N-1$),
we can assume that $V_1$ and $V_2$ are subsets of
$\overline{H^*}^{\mathit{rel}}$.
Then $V_1 \cap V_2$ is a complex variety of dimension $N-2$,
and we know that $V_1 \cap V_2 \subset (\overline{H^*}^{\mathit{rel}})_s$ since at
nonsingular
points of $\overline{H^*}^{\mathit{rel}}$ we have a unique
leaf.
\end{proof}

We can now classify those singular sets which are completely degenerate.
That is, those singular sets where we cannot move to a generic point and
expect a leaf-nondegenerate points in a neighborhood.

\begin{lemma} \label{lemma:degensingarevar}
Let $H \subset \C^N$ be a Levi-flat local real-hypervariety,
such that $E = (\overline{H^*}^{\mathit{rel}})_s$
is a connected real-analytic submanifold.
Suppose that the set $S$ of leaf-degenerate points is
dense in $E$, then $E$ must be a complex submanifold of
dimension $N-2$.
\end{lemma}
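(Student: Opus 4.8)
The plan is to show that $E$ is contained in a complex subvariety of dimension $N-2$ having the same real dimension as $E$, and then to propagate the resulting complex structure from a dense open subset to all of $E$ by real-analyticity. First I would recall that by Lemma~\ref{lemma:sprimeprop} every leaf-degenerate point lying in $\overline{H^*}^{\mathit{rel}}$ is a Segre-degenerate singularity, and that the same lemma gives $\dim (\overline{H^*}^{\mathit{rel}})_s \geq 2N-4$, so $\dim E \geq 2N-4$. Since $S \subset E \subset \overline{H^*}^{\mathit{rel}}$, every point of $S$ is Segre-degenerate. Fixing $q \in E$ and a defining function $r$ on a neighborhood $U$, Proposition~\ref{prop:defclosed} identifies the set of Segre-degenerate singularities of $H$ in $U$ with the complex subvariety $S_r$, and Proposition~\ref{prop:degennm2} gives $\dim S_r \leq N-2$. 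Because $S$ is dense in $E$ while $S_r$ is closed, the points of $E$ near $q$ are limits of points of $S \cap U \subset S_r$, whence $E \cap U \subset S_r =: X$. Thus near $q$ we have a complex subvariety $X$ with $E \subset X$ and $\dim_{\C} X \leq N-2$.

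Next comes the dimension count. Since $E$ is a real-analytic submanifold of real dimension at least $2N-4$ contained in $X$, and $\dim_{\R} X \leq 2(N-2) = 2N-4$, we are forced to have $\dim_{\R} E = 2N-4$ and $\dim_{\C} X = N-2$. The singular locus $X_s$ has complex dimension at most $N-3$, hence real dimension at most $2N-6$, so $E \cap X_s$ is a proper real-analytic subset of the $(2N-4)$-dimensional manifold $E$; consequently $E \cap X_{reg}$ is open, dense, and nonempty in $E$. At any point $q \in E \cap X_{reg}$ both $E$ and the complex manifold $X_{reg}$ have real dimension $2N-4$ and $E \subset X_{reg}$ locally, so $E$ is an open subset of $X_{reg}$ near $q$; in particular $T_q E = T_q X_{reg}$ is a complex (that is, $J$-invariant) subspace of $\C^N$.

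Finally I would propagate this to all of $E$. The failure of $T_q E$ to be $J$-invariant is measured by real-analytic functions on $E$ (for instance the components of $J\,\partial_{t_i}\gamma$ transverse to the tangent space, in any real-analytic local parametrization $\gamma$ of $E$). These functions vanish on the nonempty open set $E \cap X_{reg}$, so by the identity theorem on the connected real-analytic manifold $E$ they vanish identically. Hence $T_q E$ is a complex subspace for every $q \in E$, and an embedded real submanifold of $\C^N$ whose tangent space is everywhere $J$-invariant is a complex submanifold; therefore $E$ is a complex submanifold of dimension $N-2$. I expect the main obstacle to be this last step, namely ensuring that ``complex on a dense open subset'' genuinely forces ``complex everywhere'' — this is precisely where the connectedness and real-analyticity of $E$ are essential — together with the dimension-matching that locally identifies $E$ with an open piece of $X_{reg}$.
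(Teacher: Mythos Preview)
Your argument is correct, and it takes a somewhat different path from the paper's proof. Both proofs begin identically: leaf-degenerate points are Segre-degenerate (Lemma~\ref{lemma:sprimeprop}), hence lie in a complex subvariety of dimension at most $N-2$ (Propositions~\ref{prop:degennm2} and~\ref{prop:defclosed}), and density of $S$ in $E$ forces $\dim_{\R} E \leq 2N-4$.

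The divergence is in the second half. The paper does not pass through the Segre-degenerate locus $S_r$ as an ambient container for $E$. Instead it revisits the proof of Lemma~\ref{lemma:sprimeprop} directly: at a leaf-degenerate point one finds two distinct complex hypervarieties $V_1,V_2 \subset \overline{H^*}^{\mathit{rel}}$ with no common branch, so $V_1 \cap V_2$ is a complex subvariety of dimension $N-2$ sitting inside $E$. Since $E$ is a connected real-analytic manifold of dimension at most $2N-4$ containing this $(2N-4)$-dimensional complex variety, one concludes $E = V_1 \cap V_2$, and the smoothness of $E$ then forces this complex variety to be a complex submanifold. In short, the paper exhibits $E$ concretely as an intersection of leaves.

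Your route instead sandwiches $E$ inside $S_r$, matches dimensions, and then argues that on the dense open set $E \cap (S_r)_{reg}$ the tangent spaces are $J$-invariant, propagating this by real-analyticity (or, equivalently, by noting that $J$-invariance of $T_qE$ is a closed condition and $E\cap (S_r)_{reg}$ is dense). This is a clean ``linearized'' argument that avoids having to name a specific complex variety equal to $E$; the price is the extra propagation step, which the paper sidesteps by identifying $E$ outright with $V_1\cap V_2$. Both approaches ultimately rest on the same ingredients, and yours is a perfectly valid alternative.
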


\begin{proof}
Let $p \in S \subset E$.  The set $S$ is a subset of the Segre-degenerate
singularities (see Lemma~\ref{lemma:sprimeprop}), and the Segre-degenerate singularities must be contained
in a complex subvariety of (complex) dimension $N-2$ or less
(see Proposition~\ref{prop:degennm2}).  As $S$
is dense in $E$, then $E$ must be of (real) dimension $2N-4$ or less.

As in the proof of Lemma~\ref{lemma:sprimeprop},
we have two complex subvarieties $V_1$ and $V_2$
of dimension $N-1$ contained in $\overline{H^*}^{\mathit{rel}}$
with no branch in common.
As $V_1 \cap V_2 \subset E$ is a complex subvariety of dimension $N-2$
and $E$ is a connected real-analytic submanifold
of dimension at most $2N-4$,
we have $V_1 \cap V_2 = E$.
\end{proof}

%%%%%%%%%%%%%%%%%%%%%%%%%%%%%%%%%%%%%%%%%%%%%%%%%%%%%%%%%%%%%%%%%%%%%%%%%%%%

\section{Generic singular set}

In \cite{Lebl:lfnm} the author proved the following theorem.

\begin{thm} \label{lemma:lfnmlemma}
Let $H \subset \C^N$ be a Levi-flat local real-hypervariety.
Let $M \subset \overline{H^*}^{\mathit{rel}}$ be a real-analytic generic submanifold, then
$M$ is not a minimal CR submanifold.
\end{thm}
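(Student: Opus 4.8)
The plan is to argue by contradiction: suppose $M \subset \overline{H^*}^{\mathit{rel}}$ is a generic real-analytic submanifold that \emph{is} minimal at some point, and derive a contradiction with the Levi-flatness of $H$. Since minimality at one point implies minimality outside a proper real-analytic subvariety of $M$, I may pass to a dense open subset and assume $M$ is minimal throughout a neighborhood of a chosen point $p \in M$. The key structural input is Lemma~\ref{dimnleaf:lemma}: through every point of $\overline{H^*}^{\mathit{rel}}$ there passes a complex hypervariety $W$ of dimension $N-1$ lying inside $\overline{H^*}^{\mathit{rel}}$. I would like to use the family of these hypersurfaces (the extended Levi-foliation) to manufacture a complex subvariety through $p$ that contains $M$, contradicting genericity, since a generic submanifold lies in no proper complex variety.

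The central mechanism I would exploit is that minimality of a CR submanifold forces its CR orbit to be all of $M$, and the Levi-foliation leaves of $H^*$ passing through points of $M$ propagate along CR directions of $M$. Concretely, for each point $q \in M \cap H^*$ near $p$, Proposition~\ref{prop:nonsingsigma} and Lemma~\ref{DF:lemma} give a complex hypersurface $W_q \subset \overline{H^*}^{\mathit{rel}}$ through $q$, and by Proposition~\ref{prop:cplxinsegre} each such $W_q$ sits inside the Segre variety $\Sigma_q(U,r)$. The first key step is to show that the complex tangent directions of these leaves, restricted to $M$, span the CR directions $T^{0,1}M$; this is where the hypothesis that $M \subset \overline{H^*}^{\mathit{rel}}$ (rather than merely $M \subset H$) is used, so that $M$ is genuinely swept out by pieces of honest Levi-flat hypersurface leaves. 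The second step is to show that these complex directions, together with the commutators generated by CR vector fields of $M$, remain tangent to a single complex subvariety through $p$. Because $M$ is minimal, the CR orbit of $M$ is all of $M$, so iterating Lie brackets of the CR vector fields reaches every direction of $M$; if each bracket stays tangent to an ambient complex variety containing the leaves, then that complex variety must contain all of $M$.

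The main obstacle, and the step I expect to be most delicate, is showing that the leaves $W_q$ fit together into (the germ of) a single complex analytic object whose tangent space absorbs the real CR-orbit directions of $M$. A priori the leaves through different points of $M$ are distinct hypersurfaces, and there is no reason the union is complex analytic; the real difficulty is controlling how the Segre varieties $\Sigma_q(U,r)$ vary as $q$ moves along $M$ and showing that the ``flow'' of the Levi-foliation along CR directions of $M$ does not leave the complex-analytic category. I would attack this via the complexification: complexify the defining function $r(z,\bar w)$, use that $M$ being generic means its complexification is a totally real--type object on which $\partial r_j$ are independent, and track the vanishing locus $r(z,\bar q)=0$ as $q$ ranges over $M$. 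The plan is to exhibit an ambient complex variety $X=\Sigma_p(H)$ (or a branch thereof) containing $M$ as follows: minimality lets one reach, from a single leaf, all of $M$ via bracket iteration, and at each stage Proposition~\ref{prop:cplxinsegre} keeps the newly reached points inside the fixed Segre variety at $p$; hence $M \subset \Sigma_p(U,r)$, which is a proper complex subvariety, contradicting that a generic submanifold is contained in no proper complex variety. The crux is therefore a careful propagation argument showing the CR orbit of $M$ stays trapped in $\Sigma_p(U,r)$, combining Nagano's theorem with the reality relation $x \in \Sigma_y \iff y \in \Sigma_x$ already used in Proposition~\ref{prop:degennm2}.
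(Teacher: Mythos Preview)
Your contradiction target is right, and in the easy case where $M$ meets $H^*$ your idea works cleanly: near such a point $H=\{\Im f=0\}$ with $f$ holomorphic, so $f|_M$ is a real-valued CR function; being real it is also anti-CR, hence annihilated by all brackets of CR and conjugate-CR fields, and by Nagano and minimality $f|_M$ is constant, forcing $M$ into the complex hypersurface $\{f=c\}$ and contradicting genericity. This is essentially the first paragraph of the proof of Lemma~\ref{lemma:orbinsigma}.

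The gap is the remaining case $M\subset(\overline{H^*}^{\mathit{rel}})_s$, which is the substantive one (Lemma~\ref{lemma:genericsing} is exactly this situation with $M$ equal to the singular set). Here your ``propagation'' step does not work. Proposition~\ref{prop:cplxinsegre} only places \emph{complex} subvarieties of $H$ through $p$ inside $\Sigma_p(U,r)$; it says nothing about points of $M$ reached by iterating Lie brackets of CR vector fields, which move you in \emph{real} directions along $M$ transverse to the leaf $W_p$. Concretely, $M\subset\Sigma_p(U,r)$ means the holomorphic function $q\mapsto r(q,\bar p)$ vanishes on all of $M$, and nothing in your outline forces this beyond the single leaf $W_p$; the reality relation $x\in\Sigma_y\Leftrightarrow y\in\Sigma_x$ does not transport vanishing across bracket steps.

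The paper quotes the theorem from \cite{Lebl:lfnm}, but the argument is reproduced inside the proof of Lemma~\ref{lemma:genericsing} and is quite different. One picks a curve $\gamma$ transverse to the Levi-foliation at a nearby smooth point of $H$, applies Weierstrass preparation to $r(z,\overline{\gamma(t)})$ in $t$ to get roots $e_1(z),\dots,e_m(z)$, and forms the symmetric real-analytic function $\varphi(z,\bar z)=i^m\prod_{j,k}\bigl(e_j(z)-\overline{e_k(z)}\bigr)$. Irreducibility of $H$ gives $H\subset\{\varphi=0\}$. Off the discriminant $\Delta$ (which a generic $M$ cannot lie in) each factor $e_j-\bar e_k$ is pluriharmonic, so locally $\varphi$ is a product of terms $\Im h_\ell$ with $h_\ell$ holomorphic; hence $M$ lands in some $\{\Im h_\ell=0\}$ and the easy argument above finishes. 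The Weierstrass construction is precisely what manufactures a holomorphic $f$ near the singular points of $H$, and your Segre-variety scheme offers no substitute for it.
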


In the present paper we extend the proof of this
result to prove the following lemma.

\begin{lemma} \label{lemma:genericsing}
Let $H \subset \C^N$ be a Levi-flat local real-hypervariety.

Suppose that $E = (\overline{H^*}^{\mathit{rel}})_s$ is
a connected generic real-analytic submanifold.
Then $E$ is a generic Levi-flat submanifold
of dimension $2N-2$.
\end{lemma}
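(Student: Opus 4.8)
The plan is to reduce to a generic point of $E$, put $E$ into the Baouendi--Ebenfelt--Rothschild normal form, and then use the complex hypervariety leaves furnished by Lemma~\ref{dimnleaf:lemma} to exhibit a complex submanifold of dimension $N-2$ through such a point inside $E$; a dimension count then pins down $\dim E = 2N-2$ and forces the CR orbit to be complex. To set up the reduction, note that since $E$ is generic it is CR and is not contained in any proper complex subvariety. By Lemma~\ref{lemma:sprimeprop} every leaf-degenerate point of $E$ is Segre-degenerate, and by Proposition~\ref{prop:degennm2} the Segre-degenerate points lie in a complex subvariety $Z$ of dimension at most $N-2$. As $E \not\subset Z$, the real-analytic set $E \cap Z$ is a proper, hence nowhere dense, subset of the connected manifold $E$, so the leaf-nondegenerate (and non-Segre-degenerate) points form an open dense subset $E' \subset E$. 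I work near a point $p \in E'$ at which $\Orb_p(E)$ also has maximal dimension (such points are dense, so they meet $E'$). By Theorem~\ref{lemma:lfnmlemma}, applied to $E \subset \overline{H^*}^{\mathit{rel}}$, the submanifold $E$ is not minimal, so in the normal form of Theorem~\ref{nicenormcoord} (the $w''$-part is absent, $k=0$, as $E$ is generic) the orbit codimension $q$ satisfies $q \geq 1$; the goal is to show $q=d=2$ and that the orbit is a complex manifold.

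The heart of the argument is to produce a complex $(N-2)$-dimensional submanifold of $E$ through $p$. At $p \in E'$ the union $\Sigma'_p(H)$ is a genuine complex hypervariety of dimension $N-1$ contained in $\overline{H^*}^{\mathit{rel}}$. I claim $\Sigma'_p(H)$ has at least two distinct branches through $p$: if it had a single branch, the leaf through $p$ would glue with the unique leaves through the nearby smooth points of $\overline{H^*}^{\mathit{rel}}$ into a nonsingular Levi-foliation near $p$, making $\overline{H^*}^{\mathit{rel}}$ nonsingular at $p$ and contradicting $p \in (\overline{H^*}^{\mathit{rel}})_s = E$. Granting two distinct branches $W_1, W_2$, exactly as in the proof of Lemma~\ref{lemma:sprimeprop} their intersection cannot meet the smooth locus of $\overline{H^*}^{\mathit{rel}}$, where the leaf is unique; hence $p \in W_1 \cap W_2 \subset E$, and $W_1 \cap W_2$ is a complex variety of dimension at least $N-2$. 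Shrinking to the generic point $p$ we may assume $W_1 \cap W_2$ is a complex submanifold $V$ of dimension exactly $N-2$ with $p \in V \subset E$.

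With $V$ in hand the conclusion is a dimension count. Since $V \subset E$ is complex of dimension $N-2$, the complex tangent space of the generic submanifold $E$ contains $T V$ along $V$, so the CR dimension of $E$ is at least $N-2$ and its real codimension satisfies $d \leq 2$; because $E$ is the singular set of the $(2N-1)$-dimensional variety $\overline{H^*}^{\mathit{rel}}$ we also have $\dim_\R E \leq 2N-2$, i.e. $d \geq 2$. Hence $d=2$, $\dim_\R E = 2N-2$, and the CR dimension of $E$ is exactly $N-2$. Thus $V$ is a maximal complex submanifold of $E$, and since such a $V$ passes through every point of the dense set $E'$, the complex-tangent distribution $T^c E$ is integrable with complex leaves on $E'$. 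The Levi form of $E$ therefore vanishes on $E'$, and by real-analyticity it vanishes on all of the connected $E$; so $E$ is a generic Levi-flat submanifold of dimension $2N-2$, with $q=d=2$.

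The step I expect to be the main obstacle is establishing the two distinct branches of $\Sigma'_p(H)$ rigorously, that is, ruling out the possibility that $\Orb_p(E)$ is a real hypersurface inside a single complex leaf (the case $d-q=1$). The gluing/foliation argument sketched above must be underpinned by the quantitative mechanism behind Theorem~\ref{lemma:lfnmlemma}: one applies that circle of ideas to the minimal generic orbit $\Orb_p(E)$ sitting inside its intrinsic complexification, using Proposition~\ref{prop:cplxinsegre} to control how the nearby leaves of $\overline{H^*}^{\mathit{rel}}$ lie relative to the Segre varieties, and shows that any extra, non-complex orbit direction is incompatible with all those leaves remaining in $\overline{H^*}^{\mathit{rel}}$. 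This is precisely the place where the present lemma extends, rather than merely invokes, the proof of Theorem~\ref{lemma:lfnmlemma}.
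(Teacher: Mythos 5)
There is a genuine gap, and it sits exactly where you predicted: the claim that $\Sigma'_p(H)$ has at least two distinct branches through a generic point $p$ of $E$. Your justification --- ``if it had a single branch, the leaf through $p$ would glue with the unique leaves through nearby smooth points into a nonsingular Levi-foliation, making $\overline{H^*}^{\mathit{rel}}$ nonsingular at $p$'' --- is not a proof, and as a general implication it is false. For example, for $H = \{\Im z_1 = 0\} \cup \{\Im z_1 = (\Re z_1)^2\}$ the two sheets are tangent along $\{z_1=0\}$, every point of the singular set carries exactly one leaf (namely $\{z_1=0\}$, shared by both sheets), yet $\overline{H^*}^{\mathit{rel}}$ is singular there. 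In that example the singular set is complex rather than generic, so it does not contradict the lemma, but it shows that ``unique leaf through $p$'' does not imply ``nonsingular at $p$'': ruling out such coincidences \emph{under the genericity hypothesis on $E$} is precisely the content that has to be proved, and it cannot be assumed. Everything downstream in your argument (the intersection $W_1\cap W_2\subset E$ of dimension $N-2$, the dimension count forcing $d=2$ and CR dimension $N-2$, and the conclusion that the CR orbits are complex) is essentially sound, but it all hangs on this unproven step.

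The paper's proof is devoted almost entirely to supplying that missing input. It first reduces to an irreducible $H$ (Proposition~\ref{prop:tech}) and a Segre-nondegenerate point, then takes a curve $\gamma$ in $H^*$ transverse to the Levi-foliation, applies the Weierstrass preparation theorem to $r(z,\bar{\gamma}(t))$ in $t$, and forms the real-analytic function $\varphi(z,\bar z)=i^m\prod_{j,k}\bigl(e_j(z)-\overline{e_k(z)}\bigr)$ from the roots. Irreducibility of $H$ (Lemma~\ref{lemma:irredlemma}) propagates the containment of an open piece of $H$ in $\{\varphi=0\}$ to all of $H$, and the pluriharmonicity of $e_j-\bar e_k$ off the discriminant then exhibits $\overline{H^*}^{\mathit{rel}}$, near a generic point of $E$, as a union of finitely many \emph{nonsingular} Levi-flat hypersurfaces; since $E$ is the singular locus, at least two of them meet along $E$, and Lemma~\ref{lemma:interleviflat} (which is essentially your endgame, done carefully with a maximum-principle argument to exclude the tangential/lower-dimensional intersection) concludes. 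So your proposal is not an alternative proof but a correct outline of the conclusion with the paper's central analytic mechanism replaced by a heuristic; to complete it you would need to reproduce that Segre-variety and symmetric-function argument.
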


Large parts of the following proof already appeared in \cite{Lebl:lfnm}
in the proof of Theorem~\ref{lemma:lfnmlemma}.  As we need to modify
the proof in many places, we simply reproduce the entire proof here
with modifications as needed.  Some of the techniques used are similar
to those of Burns and Gong~\cite{burnsgong:flat}.
First we need the following short lemma,
which also appears in \cite{Lebl:lfnm}.  We need a somewhat
stronger conclusion than what is stated in \cite{Lebl:lfnm}
and hence we reprove it here.

\begin{lemma}
\label{lemma:interleviflat}
Let $H_1, H_2 \subset \C^N$, $N \geq 2$, be
two connected nonsingular real-analytic Levi-flat
hypersurfaces.  If $p \in H_1 \cap H_2$,
then there exists a neighborhood $U$ of $p$
and a complex subvariety $A \subset U$ such that
$(U \cap H_1 \cap H_2) \setminus A$ is a generic Levi-flat
submanifold of dimension $2N-2$.

In fact, if $M = H_1 \cap H_2$ is a connected
real-analytic CR submanifold,
then $M$ is either a complex hypersurface or a generic
Levi-flat submanifold of dimension $2N-2$.
\end{lemma}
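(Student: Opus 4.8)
The plan is to reduce the whole statement to the standard local normal form for a nonsingular real-analytic Levi-flat hypersurface. As recalled in the introduction, each $H_i$ is locally biholomorphic to $\{ \Im z_1 = 0 \}$, so after shrinking $U$ we may write $H_i = \{ \Im f_i = 0 \}$ for a holomorphic $f_i$ on $U$ with $df_i \neq 0$. Since $\bar f_i$ is antiholomorphic, $\partial (\Im f_i) = \frac{1}{2i} df_i$, so the defining forms $\partial(\Im f_1), \partial(\Im f_2)$ are linearly independent at a point precisely when $df_1 \wedge df_2 \neq 0$ there. I therefore set $A := \{ df_1 \wedge df_2 = 0 \}$, the common zero set of the (holomorphic) coefficients of the $2$-form $df_1 \wedge df_2$, which is a complex subvariety of $U$ (possibly all of $U$, in which case the first assertion is vacuous).

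On $U \setminus A$ the two forms are independent, so $f_1, f_2$ have $\C$-linearly independent differentials and hence extend to a holomorphic coordinate system $w = (w_1, \ldots, w_N)$ with $w_1 = f_1$ and $w_2 = f_2$. In these coordinates $H_1 \cap H_2 = \{ \Im w_1 = 0, \ \Im w_2 = 0 \}$, which is exactly the Levi-flat normal form of the introduction with $j = k = 2$: it is a generic submanifold of dimension $2N-2$ foliated by the complex $(N-2)$-dimensional submanifolds $\{ w_1 = t_1, \ w_2 = t_2 \}$ for $t_1, t_2 \in \R$, and is therefore generic Levi-flat. This proves the first assertion with this choice of $A$.

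For the second assertion, suppose $M = H_1 \cap H_2$ is a connected real-analytic CR submanifold, so its CR dimension is constant. A vector $X = \sum_j a_j \frac{\partial}{\partial \bar z_j} \in T^{0,1}$ is tangent to $M$ if and only if $X \bar f_1 = X \bar f_2 = 0$, i.e.\ if and only if $(a_1, \ldots, a_N)$ annihilates the two conjugate gradients of $f_1$ and $f_2$; since the $f_i$ are submersions these gradients never both vanish, so $\dim_\C T^{0,1}_q M = N-2$ when $q \notin A$ and $N-1$ when $q \in A$. Constancy of the CR dimension on the connected $M$ forces exactly one of two cases. If the CR dimension is $N-2$, then $M \cap A = \emptyset$, and the coordinate computation above, now valid at every point of $M$, shows that $M$ is generic Levi-flat of dimension $2N-2$. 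If the CR dimension is $N-1$, then $M \subset A$; a CR submanifold of CR dimension $N-1$ has real dimension at least $2N-2$, so either $\dim_\R M = 2N-2$, in which case $M$ is a complex submanifold of complex dimension $N-1$ (a complex hypersurface), or $\dim_\R M = 2N-1$, which would make $M$ an open piece of both $H_1$ and $H_2$ and hence force $H_1 = H_2$ as germs. Excluding this last (degenerate) coincidence, $M$ is a complex hypersurface.

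The only delicate point is this final dichotomy: one must check that the two admissible CR dimensions correspond exactly to the two stated conclusions and that no mixing occurs along $M$, which is precisely where connectedness and the constancy of the CR dimension are used. The degenerate overlap $H_1 = H_2$ (equivalently, in the functionally dependent case $f_2 = g \circ f_1$, the case where $g$ is real on the reals so that $\Im g(t) = 0$ holds on an interval rather than on an isolated set) must be set aside as the meaningless case; in every remaining situation the equation $\Im g(t) = 0$ for $t \in \R$ has only isolated solutions and the complex-hypersurface conclusion $M = \{ f_1 = t_0 \}$ is recovered directly, corroborating the CR-dimension argument.
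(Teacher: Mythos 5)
Your proof of the first assertion is essentially the paper's: normal form $H_i=\{\Im f_i=0\}$, $A=\{df_1\wedge df_2=0\}$, and coordinates $w_1=f_1$, $w_2=f_2$ off $A$. That part is fine.

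The second assertion has a genuine gap. Your dichotomy rests on the claim that $\dim_\C T^{0,1}_qM$ equals $N-2$ off $A$ and \emph{exactly} $N-1$ on $A$, which you derive from the asserted equivalence ``$X=\sum a_j\partial/\partial\bar z_j$ is tangent to $M$ iff $X\bar f_1=X\bar f_2=0$.'' Only the forward implication is automatic. The backward implication requires that $d(\Im f_1)$ and $d(\Im f_2)$ span the full conormal of $M$ at $q$, i.e.\ that the intersection is transverse of real codimension $2$ there. Precisely at points of $A$ (and, more generally, where $H_1$ and $H_2$ are tangent, e.g.\ $d\Im f_2=\lambda\, d\Im f_1$ with $\lambda$ real) this can fail: $M$ may have dimension less than $2N-2$, additional real-analytic functions vanish on $M$ whose differentials are independent of $d\Im f_1, d\Im f_2$, and an antiholomorphic vector annihilating $\bar f_1,\bar f_2$ need not be tangent to $M$. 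Then the CR dimension on $A$ is only bounded above by $N-1$, your implication ``CR dimension $N-2\Rightarrow M\cap A=\emptyset$'' is unjustified, and a third case (e.g.\ $M\subset A$ with $\dim_\R M<2N-2$) is not excluded. The paper's own example $\{(\Re(z_2-z_1^2))(\Im z_2)=0\}$ shows that such tangential intersections really occur; there they are non-CR, but your argument never shows that the CR hypothesis rules out a degenerate tangential intersection of too-small dimension. This is exactly the point the paper's proof handles differently: it uses the complex leaves $W_i\subset H_i$ through $p$ (so $\dim_\R M\ge 2N-4$ from $W_1\cap W_2$) and then a maximum-principle argument for the pluriharmonic function $\Im f$ restricted to a leaf $\{z_1=0\}$ of $H_1$ to force $\dim_\R M=2N-2$, after which the conclusion follows from the first part. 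Your closing discussion of the functionally dependent case $f_2=g\circ f_1$ addresses only the situation $A=U$, not the tangential one, so the hole remains. To repair your argument you would need either this maximum-principle step or the leaf-dimension bound to certify that $M$ has dimension $2N-2$ (or is a complex hypersurface) before the CR-dimension bookkeeping can take over.
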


\begin{proof}
Take $U$ to be a small enough neighborhood of $p$
such that $H_1$ and $H_2$ are closed subsets.
Change coordinates such that
$p = 0$, and in $U$, $H_1$ is given by $\Im z_1 = 0$, and $H_2$ is given
by $\Im f = 0$ for a holomorphic function with nonvanishing differential.
Define $A$ to be the complex subvariety of $U$
where the differentials $dz_1$
and $df$ linearly dependent.  Outside of $A$ we can change coordinates
once again and assume that $H_2$ is given by $\Im z_2 = 0$ hence the
intersection is generic Levi-flat of dimension $2N-2$.  If the
differentials are everywhere dependent, then $f$ depends only on $z_1$
and in this case the intersection is a complex hypersurface.  The first
part of the lemma is proved.

Thus assume that $M = H_1 \cap H_2$ is a connected real-analytic CR
submanifold.
At $p \in M$ there exist the complex hypersurfaces $W_1 \subset H_1$
and $W_2 \subset H_2$ (closed in $U$).  We note that $W_1 \cap W_2 \subset
H_1 \cap H_2 = M$.
If $W_1 = W_2$ then $M$
is a complex hypersurface and we are done.  Otherwise $W_1 \cap W_2$
is of (complex) dimension $N-2$.  As above assume that $H_1$ is
$\{ \Im z_1 = 0 \}$ and $H_2$ is $\{ \Im f = 0 \}$, where $f(p) = 0$.
Unless $M = H_1 = H_2$
we can assume that $\Im f$ is positive somewhere on $\{\Im z_1 = 0\}$,
and without loss of generality it can be on $\{z_1 = 0\}$.  Unless $M$ is
of dimension $2N-2$, this would mean that $\Im f \geq 0$ on
$\{ \Im z_1 = 0 \}$
and in fact $\Im f(z) > 0$ for some $z$ on $\{ z_1 = 0 \}$.
By the maximum principle this is impossible as $f(p) = 0$.
\end{proof}

To be able to assume that $H$ is irreducible we need the following
proposition.

\begin{prop} \label{prop:tech}
Let $H \subset \C^N$ be a Levi-flat local real-hypervariety.  Also
suppose that $(\overline{H^*}^{\mathit{rel}})_s$
is a connected real-analytic CR submanifold
that is not a generic Levi-flat submanifold of dimension $2N-2$
nor a complex submanifold of (complex) dimension $N-1$.

Then there exists a neighborhood $U$ of $p$, and real-hypervariety
$\tilde{H} \subset H \cap U$, irreducible as germ at $p$,
such that
$(\overline{H^*}^{\mathit{rel}})_s \cap U =
(\overline{\tilde{H}^*}^{\mathit{rel}})_s$.
\end{prop}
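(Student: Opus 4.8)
The plan is to reduce to the irreducible case by showing that, in a sufficiently small neighborhood $U$ of $p$, the relative closure $\overline{H^*}^{\mathit{rel}}$ is contributed by a single irreducible component of $H$, which will be the desired $\tilde H$. First I would decompose the germ $(H,p)$ into irreducible components and realize them as closed subvarieties $H_1,\dots,H_m$ of a small $U$. Call $H_i$ \emph{relevant} if $p\in\overline{H_i^*}^{\mathit{rel}}$; the sets $\overline{H_i^*}^{\mathit{rel}}$ are closed, so after shrinking $U$ the non-relevant ones (including components whose canopy does not accumulate at $p$, as in the umbrella-handle examples) miss $U$ entirely, and $\overline{H^*}^{\mathit{rel}}\cap U=\bigcup_{i\ \mathrm{relevant}}\overline{H_i^*}^{\mathit{rel}}\cap U$. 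Since $p\in(\overline{H^*}^{\mathit{rel}})_s\subset\overline{H^*}^{\mathit{rel}}$, at least one relevant component exists; the entire content is to prove that there is \emph{exactly} one.

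Suppose, for contradiction, that $H_i$ and $H_j$ are two distinct relevant components. By Lemma~\ref{dimnleaf:lemma} each of $\overline{H_i^*}^{\mathit{rel}}$, $\overline{H_j^*}^{\mathit{rel}}$ contains a complex hypervariety of dimension $N-1$ through $p$, so $\overline{H_i^*}^{\mathit{rel}}\cap\overline{H_j^*}^{\mathit{rel}}$ contains a complex variety through $p$ (of complex dimension $\ge N-2$) and is contained in $E:=(\overline{H^*}^{\mathit{rel}})_s$. I would then pass to a point $q$ of this intersection at which both $\overline{H_i^*}^{\mathit{rel}}$ and $\overline{H_j^*}^{\mathit{rel}}$ are nonsingular Levi-flat hypersurfaces. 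These furnish two \emph{distinct} smooth Levi-flat sheets through $q$, since two distinct irreducible real-hypervarieties of dimension $2N-1$ meet in dimension $\le 2N-2$ and so cannot share a hypersurface germ. Now Lemma~\ref{lemma:interleviflat} applies to this pair: near $q$ their intersection is either a complex hypersurface of dimension $N-1$ or a generic Levi-flat submanifold of dimension $2N-2$, in either case of real dimension $2N-2$. That the intersection genuinely attains dimension $2N-2$, rather than reducing to a lower-dimensional tangency, is precisely what the maximum-principle step inside the proof of Lemma~\ref{lemma:interleviflat} guarantees.

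This yields the contradiction. The $(2N-2)$-dimensional intersection lies in $E$, so $\dim E=2N-2$; being a $(2N-2)$-dimensional submanifold contained in the $(2N-2)$-dimensional submanifold $E$, it is relatively open in $E$. Hence near $q$ the set $E$ coincides either with a complex submanifold of dimension $N-1$ or with a generic Levi-flat submanifold of dimension $2N-2$, both excluded by hypothesis. Therefore there is a unique relevant component $\tilde H:=H_{i_0}$; it is irreducible as a germ at $p$ and satisfies $\tilde H\subset H\cap U$. As the remaining components contribute nothing to $\overline{H^*}^{\mathit{rel}}$ in $U$, we obtain $\overline{H^*}^{\mathit{rel}}\cap U=\overline{\tilde H^*}^{\mathit{rel}}$, and taking singular sets gives $(\overline{H^*}^{\mathit{rel}})_s\cap U=(\overline{\tilde H^*}^{\mathit{rel}})_s$, as required.

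The step I expect to be the main obstacle is producing the point $q$: one must ensure that the two relative closures actually meet along a set on which each is a \emph{nonsingular} Levi-flat hypersurface, so that Lemma~\ref{lemma:interleviflat} is applicable. The difficulty is that $\overline{H_i^*}^{\mathit{rel}}\cap\overline{H_j^*}^{\mathit{rel}}$ could a priori be absorbed into the singular loci $(\overline{H_i^*}^{\mathit{rel}})_s\cup(\overline{H_j^*}^{\mathit{rel}})_s$, and near Segre-degenerate singularities the local sheet structure can degenerate into cone- or cusp-type branches rather than a transverse crossing. I would resolve this by working at a generic point of a top-dimensional stratum of $E$, where $\overline{H^*}^{\mathit{rel}}$ decomposes into finitely many smooth Levi-flat branches; there a branch of $\overline{H_i^*}^{\mathit{rel}}$ and a branch of $\overline{H_j^*}^{\mathit{rel}}$ provide the two distinct smooth sheets, while Proposition~\ref{prop:degennm2} keeps the Segre-degenerate locus (of dimension $\le N-2$) too small to swallow such a stratum.
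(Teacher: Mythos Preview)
Your reduction via Lemma~\ref{lemma:interleviflat} is the same mechanism the paper uses, but you have set yourself a strictly stronger target than the proposition requires, and that target is false. You try to show that only \emph{one} irreducible component of $(H,p)$ has $p$ in the closure of its top stratum, so that in fact $\overline{H^*}^{\mathit{rel}}\cap U=\overline{\tilde H^*}^{\mathit{rel}}$. The proposition only asks that the \emph{singular sets} agree, and your stronger statement does not hold. For $N\ge 3$ take
\[
H_1=\bigl\{\abs{z_1}^2=\abs{z_2}^2\bigr\},\qquad
H_2=\bigl\{\abs{z_1}^2=4\abs{z_2}^2\bigr\},\qquad H=H_1\cup H_2 .
\]
Each $H_k$ is an irreducible Levi-flat real-hypervariety (with leaves $\{z_1=cz_2\}$, $\abs{c}=1$ resp.\ $\abs{c}=2$), each has $(\overline{H_k^*}^{\mathit{rel}})_s=\{z_1=z_2=0\}$, and $H_1\cap H_2=\{z_1=z_2=0\}$. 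Hence $E=(\overline{H^*}^{\mathit{rel}})_s=\{z_1=z_2=0\}$ is a complex submanifold of dimension $N-2$, so the hypotheses of the proposition are satisfied. Yet both $H_1$ and $H_2$ are relevant at $p=0$, so your uniqueness claim fails. Worse, at \emph{every} point of $E$ both $H_1$ and $H_2$ are singular; there is no $q\in E$ at which two smooth Levi-flat sheets of $H$ cross, so your proposed fix---pass to a generic point of a top stratum of $E$ and pick smooth branches of $H_i$ and $H_j$---produces nothing to which Lemma~\ref{lemma:interleviflat} can be applied.

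The paper avoids this by not attempting uniqueness. Its dichotomy is pointwise on $E$: at each $q\in E$ either some component $H_j$ is singular at $q$, or at least two components pass through $q$ as distinct smooth hypersurface germs. If the second alternative occurs anywhere, Lemma~\ref{lemma:interleviflat} forces $E$ near that point to be a complex hypersurface or a generic $(2N-2)$-dimensional Levi-flat submanifold, contradicting the hypothesis exactly as in your third paragraph. If it never occurs, then $E$ is covered by the finitely many closed sets $(\overline{H_j^*}^{\mathit{rel}})_s$, and one of these must coincide with the connected manifold $E$; that component is $\tilde H$. In the example above the second alternative never occurs, and either $H_1$ or $H_2$ serves as $\tilde H$.
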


\begin{proof}
Pick a point $p \in (\overline{H^*}^{\mathit{rel}})_s$.
Take the irreducible components $H_1,\ldots,H_k$ of $H$ at $p$.  These
are irreducible real-analytic subvarieties of some neighborhood $U$ of
$p$.  We can also assume that $U$ is such that
$(\overline{H^*}^{\mathit{rel}})_s \cap U$ is connected.
As there are only finitely many components $H_j$,
and $(\overline{H_j^*}^{\mathit{rel}})_s \subset
(\overline{H^*}^{\mathit{rel}})_s \cap U$,
then the manifold $(\overline{H^*}^{\mathit{rel}})_s \cap U$ is either
the singularity of some $(\overline{H_j^*}^{\mathit{rel}})_s$ or
there must exist a point $q \in (\overline{H^*}^{\mathit{rel}})_s \cap U$
where $\overline{H^*}^{\mathit{rel}}$ is a union of at least two
real-analytic submanifolds of dimension $2N-1$.  Applying
Lemma~\ref{lemma:interleviflat} would violate the hypothesis.
\end{proof}

Now we have the tools to prove Lemma~\ref{lemma:genericsing}.

\begin{proof}[Proof of Lemma~\ref{lemma:genericsing}]
We can move to a generic point on $E =
(\overline{H^*}^{\mathit{rel}})_s$.
Therefore, we can avoid arbitrary
proper complex local subvarieties, as $E$
is not contained in any such subvariety
($E$ is a generic submanifold).  Therefore, we can assume that $H$ does not
have a Segre-degenerate singularity at $p \in E$ by applying
Proposition~\ref{prop:degennm2}.

By Proposition~\ref{prop:tech}
we can assume that
$H$ is irreducible as a germ
at $p$.

We fix a connected neighborhood $U$ of $p$,
and a defining equation $r(z,\bar{z}) = 0$ for $H$ such that
$r$ complexifies to $U \times {}^*U$.  We 
define all Segre varieties using this $U$ and $r$ from now on.  We
also assume that both $H$ and $E$ are closed subsets of $U$.
We can assume that $H$ is irreducible in $U$, and $r$ is also irreducible
as a holomorphic function of $z$ and $\bar{z}$, see
Lemma~\ref{lemma:irredlemma}.

We can assume that $U$ is small enough to be able to
apply Lemma~\ref{DF:lemma}.  Thus we 
write $\Sigma'_q(H;U)$ when we are talking
about the smallest
(closed) complex subvariety of $U$ contained in $H$ and
containing $\Sigma'_q(H)$.

By Proposition~\ref{prop:defclosed} we know we could have picked
$U$ small enough such that $\dim \Sigma_q(H) = N-1$
for all $q \in U$.

By Lemma~\ref{dimnleaf:lemma},
$\Sigma_p'(H;U)$ is nonempty.
As $E$ is generic, no branch of $\Sigma'_p(H;U)$ contains $E$.
Furthermore because $E$ is generic,
no branch of $\Sigma_p'(H;U)$ lies in $E$.
Thus there must exist a $q$ on $E$ such that
$\Sigma'_q(H;U)$ intersects $H^*$.  We set $p=q$ and again apply
Proposition~\ref{prop:tech} to assume that $H$ is irreducible
at $p$.

We find a point $\zeta \in H^* \cap \Sigma_p'(H;U)$.
As there is a unique complex
hypersurface in $H$ through $\zeta$, we know that $\Sigma_{\zeta}'(H;U)$
contains a branch of $\Sigma_p'(H;U)$.

We pick $\zeta$ to lie in a topological component of
$(\Sigma_p'(H;U))_{reg} \cap H^*$
(where $(\Sigma_p'(H;U))_{reg}$ is the nonsingular part of
$\Sigma_p'(H;U)$),
such that $p$ is in the closure of this component.
Now pick a nonsingular
real-analytic curve $\gamma \colon (-\epsilon,\epsilon) \to H$
such that $\gamma(0) = \zeta$, $\{\gamma\} \subset H^*$, and such that
$\gamma$ is transverse to the Levi-foliation of $H^*$.
The function
$t \mapsto r(p,\bar{\gamma}(t))$ is not identically zero.  If it were
identically zero,
then $\Sigma_p(U,r)$ would contain an open set
(the union of representatives of $\Sigma_{\gamma(t)}'(H)$)
and we assumed that $H$ was Segre-nondegenerate at $p$ with respect to $r$.

We complexify $t$ in
$r(z,\bar{\gamma}(t))$, and
apply the Weierstrass preparation theorem to
$r(z,\bar{\gamma}(t))$ in some neighborhood $U' \times D$ where
$p \in U' \subset U$ and $D \subset \C$.  We obtain
\begin{equation}
F(z,t) = t^m + \sum_{j=0}^{m-1}a_j(z)t^j ,
\end{equation}
with the same zero set in $U' \times D$ as
$r(z,\bar{\gamma}(t))$.  Let $\Delta \subset U'$
be the discriminant set of $F$.  Then near each point
of $U' \setminus \Delta$ we (locally) have 
$m$ holomorphic functions $\{e_j\}_1^m$ that
are solutions of
$F(z,e_j(z))=0$.  We wish to study the set where at
least one of the $e_j$ is real-valued, that is
$e_j-\bar{e_j} = 0$.  We define
\begin{equation} \label{eq:phidef}
\varphi(z,\bar{z}) =
i^m \prod_{j,k=1}^m \bigl(e_j(z)-\overline{e_k(z)}\bigr).
\end{equation}
The expression on the right is real-valued and
symmetric both in the $e_j(z)$ and the $\overline{e_k(z)}$.
Therefore, after complexification we have
a well defined function on
$(U' \times {}^*{U'}) \setminus (\Delta \times {}^*{\Delta})$,
which extends to be continuous
in all of $U' \times {}^*{U'}$
and thus holomorphic in $U' \times {}^*{U'}$,
see \cite{Whitney:book}.
Thus we have a real-analytic function
$\varphi \colon U' \to \R$
that is locally outside of $\Delta$ given by 
\eqref{eq:phidef}.

Let $K = \{ z \in U' : \varphi(z,\bar{z}) = 0 \}$.
We have that
$\Sigma_{\gamma(0)}'(H;U) \cap U'$ is a subset of $K$.
We cannot immediately conclude that $\Sigma_{\gamma(t)}'(H;U) \cap U'$
is a subset of $K$ for $t$ other than zero as
$\{ \gamma \}$ might not lie in $U'$.

We pick a point
\begin{equation}
\zeta' \in \bigl(\Sigma_{\gamma(0)}'(H;U)\bigr)_{reg} \cap H^* \cap U'
\end{equation}
As $\zeta$ was in the topological component of
$\bigl(\Sigma_{\gamma(0)}'(H;U)\bigr)_{reg} \cap H^*$ containing
$p$ in its closure,
we pick a path from $\zeta'$
to $\zeta$ in $\Sigma_{\gamma(0)}'(H;U) \cap H^*$ and
a finite sequence of overlapping neighborhoods $\{V_j\}$
whose union contains the path and such that
inside each $V_j$,
$H$ is given by $\Im f_j(z) =0$ (for
some $f_j$ holomorphic in $V_j$).
We assume that $\zeta' \in V_0 \subset U'$.
The Levi-foliation is given by $f_j (z) = c$ for real
$c$, and these sets must agree on $V_j \cap V_k$.  That is, we
have a nonsingular holomorphic codimension one foliation of a neighborhood
of the path from $\zeta'$ to $\zeta$.
Therefore for some small interval of $t$, we have that the sets
$\Sigma_{\gamma(t)}'(H;U) \cap V_0$ are nonempty and
are in fact equal to sets $\{ z : f_0(z) = c(t) \}$ for some real $c(t)$.

Thus $\Sigma_{\gamma(t)}'(H;U) \cap U'$ are subsets of $K$.
Therefore an open set of $H$ is a subset of $K$.  As $H$ is irreducible,
then $H \subset K$.

As $E$ is generic, 
$(E \cap U') \setminus \Delta$ is an open dense subset of
$E \cap U'$.  Hence at a point $q \in (E \cap U') \setminus \Delta$
there is a small neighborhood $U''$ of $q$ such that in $U''$
$\varphi$ is given by
$i^m \prod_{j,k=1}^m \bigl(e_j(z)-\overline{e_k(z)}\bigr)$.
As
$e_j(z)-\overline{e_k(z)}$ is pluriharmonic
its real and imaginary parts are pluriharmonic, meaning
that we can represent them as the imaginary part of a 
holomorphic function, that is
$e_j(z)-\overline{e_k(z)} = \Im f_{jk}(z) + i \Im g_{jk}(z)$.
Therefore $H \cap U''$
is contained in the zero set of
\begin{equation}
i^m
\prod_{j,k=1}^m (\Im f_{jk}(z) + i \Im g_{jk}(z)) .
\end{equation}
The zero set of each
$\Im f_{jk}(z) + i \Im g_{jk}(z)$ is a real-analytic subvariety
of real dimension $2N-1$ or $2N-2$.
Hence, there is some finite set of holomorphic functions $\{ h_k \}$
defined in $U''$ such that
\begin{equation}
	\overline{H^*}^{\mathit{rel}} \cap U'' \subset \{ z : \prod_k \Im h_k(z) = 0 \} .
\end{equation}
The set where the differentials of $h_k$ vanish is a complex subvariety
of $U''$.  As $E$ is generic, there must be a point $q' \in E$ and
a neighborhood $U'''$ of $q'$ such that
$\overline{H^*}^{\mathit{rel}} \cap U'''$
is contained in the union of finitely many nonsingular real-analytic
Levi-flat hypersurfaces.  Therefore
$\overline{H^*}^{\mathit{rel}} \cap U'''$ itself must be the union of finitely
many nonsingular real-analytic Levi-flat hypersurfaces.
We apply Lemma~\ref{lemma:interleviflat}.
Outside of a complex analytic subvariety $A$ of $U'''$ we have
that $E$ is a dimension $2N-2$ Levi-flat submanifold.  Again
as $E$ is generic, $(E \cap U''') \setminus A$ is nonempty.
Therefore there exists a point on $E$ where $E$ is Levi-flat
dimension $2N-2$ generic submanifold.  As $E$ is a
connected generic real-analytic submanifold,
then $E$ is a Levi-flat dimension $2N-2$ generic submanifold at
every point.
\end{proof}

%%%%%%%%%%%%%%%%%%%%%%%%%%%%%%%%%%%%%%%%%%%%%%%%%%%%%%%%%%%%%%%%%%%%%%%%%%%%

\section{Intersections of Levi-flats with complex manifolds}

We need to see what happens to a Levi-flat
real-hypervariety when we intersect it with a complex manifold.  The following
lemma is useful in proving results about
Levi-flat hypervarieties by induction on dimension.

\begin{lemma} \label{lemma:interlemma}
Let $H \subset \C^N$ be a Levi-flat local real-hypervariety and
let $V \subset \C^N$ be a connected complex submanifold of positive
dimension $k$.  Suppose that there exists a point
$p \in \overline{H^*}^{\mathit{rel}} \cap V$.
Then exactly one of the following statements is true.
\begin{enumerate}[(i)]
\item $H \cap V$ is a complex variety of dimension $k-1$ and
$p$ is a leaf-degenerate point of $H$.
\item $H \cap V$ is a real-hypervariety of $V$ (is of dimension
$2k-1$).
\item $V \subset H$.
\end{enumerate}
\end{lemma}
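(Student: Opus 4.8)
The plan is to prove this by a trichotomy argument based on the dimension of $H \cap V$. Since $H$ is defined by $r(z,\bar z) = 0$ near $p$, the restriction $r|_V$ is a real-analytic function on $V$, and either it vanishes identically (giving case (iii), $V \subset H$) or it does not. If $r|_V \not\equiv 0$, then $H \cap V$ is a proper real-analytic subvariety of $V$, so its dimension is at most $2k-1$. The cases (i) and (ii) should then be distinguished by whether $H \cap V$ has real dimension $2k-1$ (a real-hypervariety of $V$, case (ii)) or strictly less. The substance of the lemma is the claim that when the dimension drops below $2k-1$, the intersection is forced to be a \emph{complex} variety of dimension $k-1$, and that this forces $p$ to be leaf-degenerate.

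First I would handle the mutual exclusivity, which is mostly formal: if $V \subset H$ then $r|_V \equiv 0$ and $H \cap V = V$ has dimension $2k \neq k-1, 2k-1$, so (iii) excludes the others; and (i) has real dimension $2(k-1) = 2k-2 < 2k-1$, so it excludes (ii). The bulk of the work is showing these three exhaust all possibilities. Assuming $V \not\subset H$ and $H \cap V$ is not a real-hypervariety of $V$, I must show $H \cap V$ is complex analytic of dimension $k-1$ and deduce leaf-degeneracy at $p$. The natural tool is Lemma~\ref{dimnleaf:lemma}: through $p \in \overline{H^*}^{\mathit{rel}}$ there passes an irreducible complex hypersurface $W \subset \overline{H^*}^{\mathit{rel}}$ of dimension $N-1$. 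I would intersect this leaf structure with $V$: for each point $q \in (\overline{H^*}^{\mathit{rel}} \cap V)_{reg}$ near $p$, the germ of the complex hypersurface $W_q$ through $q$ meets $V$, and $W_q \cap V$ is a complex variety contained in $H \cap V$. The idea is that, because the real dimension of $H \cap V$ is too small to be a real-hypervariety of $V$, these complex pieces $W_q \cap V$ must actually fill up all of $H \cap V$, forcing $H \cap V$ to be complex analytic; one then reads off dimension $k-1$ from the fact that a leaf $W$ generically meets $V$ in codimension one inside $V$.

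For the leaf-degeneracy conclusion, I would argue that since $H \cap V$ is a $(k-1)$-dimensional complex variety lying in $H$, and since the Levi-foliation leaves $W_q$ through nearby regular points $q \in \overline{H^*}^{\mathit{rel}}$ all contain (germs of) complex hypersurfaces meeting $V$ in $H \cap V$, there are infinitely many distinct germs of complex hypervarieties of $H$ through $p$: as $q$ varies over a real-analytic family transverse to the leaves, the leaves $W_q$ are genuinely distinct, and each gives a distinct complex hypervariety germ $(W_q, p)$ once we note they all pass through the common complex piece sitting inside $V$. This is precisely the defining condition for $p$ to be leaf-degenerate (equivalently, via Lemma~\ref{lemma:sprimeprop}, $\Sigma_p'(H)$ is not a single hypervariety). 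I expect the main obstacle to be the dichotomy step: rigorously ruling out the possibility that $H \cap V$ has dimension strictly between $2k-2$ and $2k-1$, or that it is a $(2k-2)$-real-dimensional set that is genuinely CR-but-not-complex rather than complex analytic. Controlling this requires invoking the pseudoconvexity/Levi-flatness of $\overline{H^*}^{\mathit{rel}}$ from all sides (the characterization following Lemma~\ref{dimnleaf:lemma}) to show that a lower-dimensional slice cannot support a nontrivial Levi form, together with Corollary~\ref{cor:localcplxisglobalcplx} to upgrade ``locally complex on the regular set'' to ``globally complex analytic.''
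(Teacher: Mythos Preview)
Your argument for leaf-degeneracy has a genuine gap. You assert that the leaves $W_q$ through nearby regular points $q\in H^*$ ``all pass through the common complex piece sitting inside $V$,'' but you give no reason why an extended leaf $W_q$ should meet $V$ at all; a closed complex hypersurface in a polydisc need not intersect a given complex submanifold. Worse, when $\dim(H\cap V)<2k-1$ your own $r|_V$ dichotomy already shows that $H^*\cap V$ is empty near $p$: at any $q\in H^*\cap V$ one writes $H^*=\{\Im f=0\}$ locally, and either $f|_V$ is constant (forcing $V\subset H$) or $\{\Im f|_V=0\}$ is of dimension $2k-1$. So there are no regular points of $H$ on $V$ from which to launch your construction, and for $q$ off $V$ you have supplied no mechanism forcing $p\in W_q$.

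The paper's route is different and supplies exactly this missing mechanism. It first reduces by induction on $\codim V$ to the case $\dim V = N-1$, and then slices down to $N=2$ with $V=\{z=0\}$, $p=0$, and $H\cap V=\{0\}$. The point is that $H\cap V=\{0\}$ forces the nearby parallel slices $V_\epsilon=\{z=\epsilon\}$ to have $V_\epsilon\cap H$ \emph{compact}; a dimension count shows $V_\epsilon\cap H^*$ is one-dimensional, so infinitely many distinct leaves meet each $V_\epsilon$. Compactness of these fibers makes the projection $(z,w)\mapsto z$ proper on $H$ near $0$, hence proper on every Diederich--Forn\ae ss extended leaf $W\subset H$, and such a $W$ must then hit $\{z=0\}$ and therefore contain $0$. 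That is what produces infinitely many distinct complex hypervarieties through $p$. Your ``fill up $H\cap V$ with slices $W_q\cap V$ and invoke Corollary~\ref{cor:localcplxisglobalcplx}'' plan for the complex-analyticity of $H\cap V$ founders on the same missing ingredient, whereas after the paper's reduction to $N=2$ a zero-dimensional intersection is trivially complex.
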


\begin{proof}
By induction on codimension of $V$ it is enough to consider
$V$ of dimension $N-1$.
So let us suppose that $H \cap V$ is a proper subset of $V$
and hence of dimension
at most $2N-3$.
By Lemma~\ref{dimnleaf:lemma},
$\Sigma'_p(H)$ is nonempty, and contains at least 
one irreducible complex subvariety $W$ of dimension $N-1$.  If $W$
is not equal to $V$ then
$W \cap V$ must be of dimension $N-2$.  Hence $H \cap V$
must be of dimension at least $2N-4$.  We simply need to show that
if $H \cap V$ is a complex variety of complex dimension $N-2$ then
$p$ is a leaf-degenerate point.

By restricting to the correct 2 complex dimensional subspace it is
enough to consider $N=2$ with coordinates $(z,w) \in \C^2$
and it is also enough to consider
$V = \{ z = 0 \}$ and $p = 0$.
Suppose for contradiction that $V \cap H = \{ 0 \}$.
Take $V_\epsilon = \{ z = \epsilon \}$, for a small
complex $\epsilon$.  We note that
$V_\epsilon \cap H$ must be compact for small $\epsilon$
as $H$ is a closed subvariety of a neighborhood of the origin.
If $V_\epsilon \cap H$ was
isolated points (dimension 0) for all small $\epsilon$,
then dimension of $H$ would be $2$ which would
be a contradiction.  Thus $V_\epsilon \cap H$ must be of dimension 1
for $\epsilon$ arbitrarily close to 0 ($V_\epsilon \cap H$ cannot be
dimension $2$ and still compact as then $V_\epsilon$ would be
a subset of $H$).  Furthermore for $\epsilon$ arbitrary
close to zero we must have that $V_\epsilon \cap H^*$ is of dimension 1.
Since $V_\epsilon \cap H^*$ is of dimension 1
and $V_\epsilon \cap H$ is compact,
there must be infinitely many distinct leaves of $H^*$ that intersect
$V_\epsilon \cap H^*$.  As 
$V_\epsilon \cap H^*$ approaches the origin as $\epsilon$ goes to 0,
we see that infinitely many distinct leaves of $H^*$ must have the origin
in their closure.  By Lemma~\ref{DF:lemma} all of those leaves extend to
a subvariety of a neighborhood of the origin and the origin must be a
leaf-degenerate point.
\end{proof}

All three cases are possible.  The last two are obvious.  For
the first case consider the
Levi-flat hypersurface $H$ given by $\abs{z}^2-\abs{w}^2 = 0$.  Then
the set $V = \{ z = 0 \}$ intersects $H$ at the origin only.  The
origin is a leaf-degenerate point where for each $\theta$
we obtain a leaf $\{ z = e^{i\theta} w \}$.

%%%%%%%%%%%%%%%%%%%%%%%%%%%%%%%%%%%%%%%%%%%%%%%%%%%%%%%%%%%%%%%%%%%%%%%%%%%%

\section{CR orbits of manifolds in Levi-flats}

We know that a complex subvariety of $H$ must lie in $\Sigma'_p(H)$,
however it is also true that a minimal CR submanifold that
lies inside $H$ also lies inside $\Sigma'_p(H)$ as we can
prove that its intrinsic complexification does.  In particular we prove
the following lemma.

\begin{lemma} \label{lemma:orbinsigma}
Let $H \subset \C^N$ be a Levi-flat local real-hypervariety
without degenerate singularities.
Suppose that $M \subset \overline{H^*}^{\mathit{rel}}$
is a connected real-analytic CR submanifold and $p \in M$ is a point
such that $\Orb_p(M)$ is of maximal dimension.

Then $\Orb_p(M) \subset \Sigma'_p(H)$.
\end{lemma}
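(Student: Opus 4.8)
The plan is to prove the stronger statement that the intrinsic complexification $X$ of $\Orb_p(M)$ is contained in $\Sigma'_p(H)$; since $\Orb_p(M)\subset X$, this yields the lemma. First I would apply Theorem~\ref{nicenormcoord} at $p$ (legitimate because $\Orb_p(M)$ has maximal dimension) to put $M$, and hence $\Orb_p(M)$, into normal form. In those coordinates $(z,w,w',w'')$, the orbit through $p=0$ is $\Im w=\varphi(z,\bar z,\Re w,0)$, $w'=0$, $w''=0$, which is a \emph{generic} submanifold of the complex submanifold $X:=\{w'=0,\ w''=0\}$; in particular $\Orb_p(M)$ is not contained in any proper complex subvariety of $X$. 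Because $H$ has no Segre-degenerate singularities, by Proposition~\ref{prop:defclosed} and the remark following it I may fix a neighborhood $U$ and a defining function $r$ complexifying to $U\times{}^*U$ with $\dim\Sigma_q(H)=N-1$ for every $q\in U$.

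The decisive reduction is that it suffices to prove the pointwise containment $\Orb_p(M)\subset\Sigma_p(H)$, i.e.\ $r(x,\bar p)=0$ for all $x\in\Orb_p(M)$. Indeed, $z\mapsto r(z,\bar p)$ is holomorphic, and a holomorphic function vanishing on the generic submanifold $\Orb_p(M)$ of $X$ must vanish on all of $X$; hence $X\subset\Sigma_p(H)$. Running the same argument with an arbitrary base point $\zeta\in\Orb_p(M)$ (legitimate since $\Orb_\zeta(M)=\Orb_p(M)$) gives $r(z,\bar\zeta)=0$ for all $z\in X$ and all $\zeta\in\Orb_p(M)$; fixing $z\in X$ and using the reality relation $\overline{r(z,\bar\zeta)}=r(\zeta,\bar z)$ together with the identity principle in $\zeta$ yields $r(z,\bar z')=0$ for all $z,z'\in X$, and taking $z=z'$ shows $X\subset H$.

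The main obstacle is establishing $\Orb_p(M)\subset\Sigma_p(H)$, and this is where I would adapt the propagation argument from the proof of Theorem~\ref{lemma:lfnmlemma}. The CR orbit is reached from $p$ by concatenating flows along real and imaginary parts of CR vector fields of $M$, that is, by complex-tangential curves in $M$. Since $M\subset\overline{H^*}^{\mathit{rel}}$, the complex tangent directions of $M$ lie in those of $H$, which along $H^*$ are exactly the tangents to the Levi-foliation; so a complex-tangential curve lying in $H^*$ stays within a single leaf, i.e.\ within one branch of a Segre variety. For a Levi-flat $H$ the relation ``$z$ lies on the leaf through $\zeta$'' is, on $H^*$, the symmetric relation $z\in\Sigma_\zeta(H)\Leftrightarrow\zeta\in\Sigma_z(H)$, so Segre membership is transitive along leaves. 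Combined with the inclusion of the Segre varieties of $M$ inside those of $H$ (from complexifying $r\equiv0$ on $M$), this lets me propagate the vanishing $r(\cdot,\bar p)=0$ outward from $p$ along the orbit, the passage across the singular locus of $H$ being controlled by the fact that $\Sigma_p(H)$ is closed. The careful bookkeeping across singular points, via the iterated Segre sets of $M$, is the technical heart of the proof.

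Finally I would identify $X$ with a piece of $\Sigma'_p(H)$. If $\dim X=N-1$, then $X$ is itself an $(N-1)$-dimensional complex subvariety through $p$ contained in $H$, so $X\subset\Sigma'_p(H)$ by definition. If $\dim X<N-1$, I pick a regular point $\zeta\in X\cap H^*$ (a generic point of $\Orb_p(M)$ lies in $H^*$); there the leaf $L_\zeta$ is the unique complex hypersurface of $H$ through $\zeta$, so the complex subvariety $X\subset H$ must lie in $L_\zeta$, and $L_\zeta$ extends by Lemma~\ref{DF:lemma} to a closed $(N-1)$-dimensional complex subvariety $W\subset H\cap U$ with $X\subset W$. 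Since $p\in X\subset W$, the variety $W$ is an $(N-1)$-dimensional complex subvariety through $p$ contained in $H$, whence $X\subset W\subset\Sigma'_p(H)$ and therefore $\Orb_p(M)\subset\Sigma'_p(H)$.
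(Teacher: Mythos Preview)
Your approach is genuinely different from the paper's, and the step showing $X\subset H$ from $\Orb_p(M)\subset\Sigma_p(H)$ via reality of $r$ is a nice idea. However, there is a real gap in the final step.

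The claim that ``a generic point of $\Orb_p(M)$ lies in $H^*$'' is false when $M$ lies entirely in the singular set $(\overline{H^*}^{\mathit{rel}})_s$. In that situation $\Orb_p(M)\subset M$ never meets $H^*$, and the intrinsic complexification $X$ can miss $H^*$ as well, so there is no point $\zeta\in X\cap H^*$ from which to run your leaf argument. A concrete instance: take $H=\{(\Im z_1)(\Im z_2)=0\}\subset\C^3$ and $M=(\overline{H^*}^{\mathit{rel}})_s=\{\Im z_1=\Im z_2=0\}$. Here $M$ is generic Levi-flat of dimension $4$, the orbit $\Orb_0(M)$ is the complex line $\{z_1=z_2=0\}$, so $X=\Orb_0(M)$ has $\dim X=1<N-1$, and $X\subset H_s$ entirely. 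The conclusion $\Orb_0(M)\subset\Sigma'_0(H)$ does hold (the line sits inside $\{z_1=0\}\subset H$), but your argument does not prove it. This is precisely the case the paper isolates and handles by a separate mechanism: when $M$ is minimal and contained in the singular locus, Theorem~\ref{lemma:lfnmlemma} forces $M$ to be nongeneric, and one then intersects $H$ with a complex hyperplane using Lemma~\ref{lemma:interlemma} and inducts on the ambient dimension $N$. The nonminimal case is likewise dispatched by slicing with $\{w'_1=0\}$ and inducting. Your direct route via $X$ avoids induction and is more conceptual where it works, but the case $X\cap H^*=\emptyset$ is exactly what drives the paper to the inductive argument.

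A secondary issue: the propagation step establishing $\Orb_p(M)\subset\Sigma_p(H)$ is only sketched. You invoke complex-tangential flows and iterated Segre sets, but since the curves you flow along may well sit inside the singular locus (again the case $M\subset(\overline{H^*}^{\mathit{rel}})_s$), the phrase ``passage across the singular locus being controlled by the fact that $\Sigma_p(H)$ is closed'' needs an actual argument; closedness alone does not propagate Segre membership through singular points.
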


\begin{proof}
First suppose that $M$ is minimal, that is, $\Orb_p(M) = (M,p)$ as germs.
If $M \not\subset (\overline{H^*}^{\mathit{rel}})_s$, then we can 
find a point $q \in M$ near which $\overline{H^*}^{\mathit{rel}}$ is nonsingular.
Thus suppose that $H$ is nonsingular.  In particular
we have
$H = \{ \Im f = 0 \}$ as germs at $q$
for a holomorphic function $f$ defined near $q$.  
As $M$ is minimal, then $f$ is constant on $M$ (the sets
$\{ f = c \} \cap M$ define CR submanifolds of same CR dimension as $M$).
Thus
$f$ is constant on the intrinsic complexification of $M$.  Therefore $M$
near $q$ is contained in the leaf of the Levi-foliation of $H^*$.  As $M$
is connected, the closure of the leaf must contain $p$.  The closure
of the leaf that contains $p$ must extend to a neighborhood of $p$
by Lemma~\ref{DF:lemma}, and therefore as germs at
$p$, $(M,p) \subset \Sigma'_p(H)$.

Now suppose that $M \subset (\overline{H^*}^{\mathit{rel}})_s$.  As $M$ is minimal,
it cannot be generic by Theorem~\ref{lemma:lfnmlemma}.
We write coordinates vanishing at $p$ as in Theorem~\ref{nicenormcoord}
$(z,w,w'') \in \C^n \times \C^d \times \C^k$
and define $M$ by
\begin{equation}
\begin{aligned}
\Im w & = r (z,\bar{z},\Re w) , \\
w'' & = 0 .
\end{aligned}
\end{equation}
Write $w'' = (w''_1,\ldots,w''_k)$.
If $k > 1$, then there is some affine function $L \colon \C^k \to \C$
such that $H \cap \{ L w'' = 0 \}$ is of real dimension strictly
less than $2N-2$
and therefore of dimension $2N-3$ by Lemma~\ref{lemma:interlemma}.
Thus the case $k > 1$ is finished by induction on the dimension $N$.

Therefore we are left with the case that $k=1$ (the intrinsic
complexification of $M$ is a complex hypersurface).  If
$H \cap \{ w'' = 0 \}$ is of dimension strictly less than $2N-2$ we are done
by induction as above.
Therefore assume that $\{ w'' = 0 \} \subset H$.  But then
$\{ w'' = 0 \} \subset \Sigma'_p(H)$ (as germs at $p$)
by definition of $\Sigma'_p(H)$
and we are finished.

It is left to deal with the nonminimal case.
In this case we use Theorem~\ref{nicenormcoord} to write
$M$ in the coordinates
$(z,w,w',w'') \in \C^n \times \C^{d-q} \times \C^q \times \C^k$
such that $M$ is defined by
\begin{equation}
\begin{aligned}
\Im w & = r (z,\bar{z},\Re w, \Re w') , \\
\Im w' & = 0 , \\
w'' & = 0 .
\end{aligned}
\end{equation}
Write $w' = (w'_1,\ldots,w'_q)$.  Suppose that
$H \cap \{ w'_1 = 0 \}$ is of dimension strictly less than $2N-2$.
Then $H \cap \{ w'_1 = 0 \}$ is of dimension $2N-3$
by Lemma~\ref{lemma:interlemma} and we can finish
by induction.

Thus assume that $\{ w'_1 = 0 \} \subset H$.  Then
$\{ w'_1 = 0 \} \subset \Sigma'_p(H)$ (as germs at $p$).
By Theorem~\ref{nicenormcoord} we obtain that 
$\Orb_p(M) \subset \{ w'_1 = 0 \}$ and we are done.
\end{proof}

%%%%%%%%%%%%%%%%%%%%%%%%%%%%%%%%%%%%%%%%%%%%%%%%%%%%%%%%%%%%%%%%%%%%%%%%%%%%

\section{Holomorphic foliations}

A possibly singular \emph{holomorphic foliation} $\sF$ of codimension one of
a complex manifold $M$ is given by
an open covering $\{ U_\iota \}$ and a one-form $\omega_\iota$ defined
in $U_\iota$ such that if
$U_\iota \cap U_\kappa \not= \emptyset$,
then $\omega_\iota$ and $\omega_\kappa$ must be proportional at every
point of $U_\iota \cap U_\kappa$.  Furthermore $\omega_\iota$
is completely integrable, $\omega_\iota \wedge d \omega_\iota = 0$.
A complex submanifold $L \subset M$ is called a solution
if it satisfies $\omega_\iota |_{T L} = 0$
(restricted to the tangent space of $L$)
in each $U_\iota$.  The points where $\omega_\iota$ vanishes are called
the singular set of $\sF$ and denoted $\sing(\sF)$.  The set
$M \setminus \sing(\sF)$ is then a union of immersed complex hypersurfaces
called leaves of the foliation.  
The codimension
of the singularity of the foliation
can safely be taken to be at least 2, by dividing out
the coefficients of the form by any common divisors.
See \cites{CamachoNeto:book, linsneto:note}
for more information on foliations in general.

If $H$ is a nonsingular real-analytic
Levi-flat hypersurface, then the foliation of $H$ by
complex hypersurfaces, the \emph{Levi-foliation}, is a real-analytic
foliation with leaves that are complex hypersurfaces.  As locally
a real-analytic Levi-flat hypersurface can be defined by
$\{ \Im f = 0 \}$ where $df \not= 0$, we can see that the Levi-foliation
extends as a holomorphic codimension one foliation to a neighborhood of
$H$.  It is not hard to see that locally the extended foliation is uniquely
determined: if a one-form also $\omega$ defines an extension of
the foliation, then on $H$ we have $df = g \omega$ for a nonvanishing
real-analytic CR function $g$.
A real-analytic CR function on a real-analytic hypersurface uniquely
extends to a holomorphic function on a neighborhood of the hypersurface.
As $df$ and $\omega$ are proportional, they define the same unique
foliation in a neighbourhood.
We thus have the following proposition.

\begin{prop} \label{prop:gotfol}
Let $H \subset \C^N$ be a real-analytic Levi-flat submanifold of dimension
$2N-1$.  Then there exists a nonsingular codimension one holomorphic foliation
defined on a neighborhood $U$ of $H$ that extends the Levi-foliation of
$H$.
\end{prop}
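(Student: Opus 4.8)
The plan is to establish Proposition~\ref{prop:gotfol} by constructing the extended foliation locally from the defining function and then verifying that the local pieces patch together coherently and uniquely. First I would work locally: around each point $q \in H$ we may write $H = \{ \Im f = 0 \}$ for a holomorphic function $f$ defined on some neighborhood $U_q$ with $df \neq 0$ (this is the standard local normal form for a nonsingular real-analytic Levi-flat hypersurface, and holds because $f$, being the holomorphic extension of a real-analytic CR function, is defined on a full neighborhood). On $U_q$ I would set $\omega_q := df$. Each such $\omega_q$ is nonvanishing since $df \neq 0$, and trivially satisfies $\omega_q \wedge d\omega_q = df \wedge d(df) = df \wedge 0 = 0$, so each local form is completely integrable and defines a nonsingular codimension one holomorphic foliation of $U_q$ whose leaves $\{ f = c \}$ restrict on $H$ to the leaves $\{ f = c \}$ of the Levi-foliation.

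The central step is the patching on overlaps. Suppose $U_q \cap U_{q'} \neq \emptyset$, with defining functions $f$ and $f'$. On the real hypersurface $H$, both $df$ and $df'$ annihilate the tangent spaces of the Levi leaves, which are complex hypersurfaces of real codimension $2$ in $H$; hence $df$ and $df'$ are proportional along $H$, say $df = g\, df'$, where $g$ is a nonvanishing real-analytic CR function on $H \cap U_q \cap U_{q'}$. The key analytic input is that a real-analytic CR function on a real-analytic hypersurface extends uniquely to a holomorphic function on a neighborhood, as recorded in \S2; I would apply this to extend $g$ to a holomorphic nonvanishing function $\tilde{g}$ on a neighborhood. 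Then $df$ and $\tilde{g}\, df'$ are two holomorphic one-forms agreeing on $H$, and since $H$ is a real hypersurface it is not contained in any proper complex subvariety, so two holomorphic forms agreeing on $H$ must agree on a full neighborhood by the identity principle. Therefore $df$ and $df'$ are proportional at every point of the overlap, which is exactly the cocycle compatibility required in the definition of a holomorphic foliation.

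Collecting the data $\{ (U_q, \omega_q) \}$ over a covering of $H$, together with the verified overlap proportionality and complete integrability, yields a nonsingular codimension one holomorphic foliation $\sF$ on the neighborhood $U := \bigcup_q U_q$ of $H$. By construction its leaves restrict on $H$ to the Levi leaves, so $\sF$ extends the Levi-foliation, establishing the proposition.

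The main obstacle I anticipate is the uniqueness-and-patching argument rather than the local construction, which is essentially immediate. Specifically, the delicate point is justifying that proportionality of the forms \emph{on $H$} propagates to proportionality \emph{on a neighborhood}: this rests on the interplay between the CR-extension theorem (to get $\tilde g$ holomorphic and nonvanishing) and the fact that $H$, being a real hypervariety, is not contained in a proper complex variety so the identity theorem applies. One must be careful that $\tilde g$ is nonvanishing near $H$ (shrinking neighborhoods as needed) so that the resulting forms are genuinely proportional and the foliation is nonsingular. Once these analytic facts are in place the conclusion follows formally, and I would note in passing that this argument simultaneously yields the local uniqueness of the extension already remarked in the surrounding text.
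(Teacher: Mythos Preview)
Your proposal is correct and follows essentially the same argument as the paper: the paper's ``proof'' is the paragraph immediately preceding the proposition, which likewise writes $H$ locally as $\{\Im f = 0\}$ with $df \neq 0$, takes $\omega = df$, and argues that any two extensions are proportional because the proportionality factor on $H$ is a real-analytic CR function that extends holomorphically. Your write-up is in fact more careful than the paper's about the identity-principle step and about verifying the overlap cocycle explicitly, but the method is identical.
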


A singular Levi-flat local real-hypervariety $H$ may have several components
of $H^*$ even if $H$ is irreducible.  We do, however, have the following
lemma.

\begin{lemma} \label{lemma:folonall}
Let $H \subset \C^N$ be an irreducible Levi-flat local real-hypervariety and 
$\sF$ a possibly singular codimension one holomorphic foliation
defined on a neighborhood of $H$.  Suppose that
there is
an open subset $G \subset H^*$ such that $\sF$ extends the
Levi-foliation of $G$.  Then $\sF$ extends the Levi-foliation
of $H^*$.
\end{lemma}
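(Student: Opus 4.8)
The plan is to argue that the set of points of $H^*$ where the given foliation $\sF$ extends the Levi-foliation is both open and (relatively) closed in $H^*$, and hence—by connectedness, to be arranged—is all of $H^*$. The key mechanism is the uniqueness of the extended foliation near nonsingular Levi-flat points, together with the irreducibility of $H$, which prevents the agreement set from being a proper subvariety.

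\medskip

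First I would fix notation: say that $\sF$ \emph{agrees with} the Levi-foliation at a point $q \in H^*$ if, on the germ $(H^*,q)$, the leaves of the Levi-foliation are integral submanifolds of the one-form defining $\sF$ near $q$. Let $A \subset H^*$ be the set of such points. By hypothesis $A$ contains the open set $G$, so $A$ is nonempty. The set $A$ is clearly open, since the condition is local and both the Levi-foliation and $\sF$ are continuous (real-analytic) objects. The heart of the argument is to show that $A$ is closed in $H^*$, or more precisely that its complement cannot separate $H^*$.

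\medskip

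The main step is a real-analyticity/identity-principle argument. Near any point $q \in H^*$ we may write $H = \{\Im f = 0\}$ with $df \neq 0$, and the Levi-foliation is $\{f = c\}$ for real $c$. By Proposition~\ref{prop:gotfol} the Levi-foliation extends to a \emph{unique} nonsingular holomorphic foliation of a neighborhood of $(H^*,q)$. Now $\sF$ is a fixed foliation defined on a neighborhood of all of $H$. On the open set $G$ (and hence on any connected piece of $H^*$ meeting $A$) the defining form $\omega_\iota$ of $\sF$ must be proportional to $df$ along $H$; writing $df = g\,\omega_\iota$ as in the discussion preceding Proposition~\ref{prop:gotfol}, the proportionality factor $g$ is a nonvanishing real-analytic CR function on $H^*$, which extends holomorphically. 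The condition that $\sF$ agrees with the Levi-foliation at $q$ is then the real-analytic condition that $\omega_\iota$ be proportional to $df$ on $(H^*,q)$, equivalently that $\omega_\iota \wedge df$ vanish on $H^*$ near $q$. Since $\omega_\iota \wedge df$ is a real-analytic object vanishing on the open set $G \subset H^*$, and since $H^*$ is a real-analytic hypersurface, I would invoke the identity principle on each connected component of $H^*$: if the agreement holds on an open subset of a connected component, it holds on the whole component. Thus $A$ is a union of connected components of $H^*$.

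\medskip

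The remaining—and most delicate—point is to pass from ``$A$ contains some components of $H^*$'' to ``$A$ is all of $H^*$,'' using irreducibility of $H$. The potential obstacle is that $H^*$ may be \emph{disconnected} even though $H$ is irreducible as a real-hypervariety; this is exactly the phenomenon flagged in the text (``$H^*$ may have several components even if $H$ is irreducible''). To bridge distinct components I would use Lemma~\ref{dimnleaf:lemma}: through each point of $\overline{H^*}^{\mathit{rel}}$ there is a complex hypersurface $W$ lying in $\overline{H^*}^{\mathit{rel}}$, and such leaves accumulate at singular points where different components of $H^*$ meet. The idea is that a leaf $W$ of the Levi-foliation accumulating to a singular point $s$ lies in $\overline{H^*}^{\mathit{rel}}$, and $\sF$, being a holomorphic foliation defined on a full neighborhood of $s$, must have $W$ as a solution (since $W$ agrees with a leaf of $\sF$ wherever $W \cap A \neq \emptyset$, by the real-analyticity of the solution condition for the holomorphic foliation $\sF$ across $s$). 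Concretely, I would argue that the set of leaves $W$ that are solutions of $\sF$ is closed under taking limits in the sense of Lemma~\ref{F:lemma}, so that a leaf in one component forces, via accumulation at the singular set, that adjacent components also lie in $A$. I expect the main obstacle to be handling the singular set $H_s$ carefully: one must ensure that the proportionality $\omega_\iota \wedge df = 0$, known on components touching $A$, propagates across $H_s$ to neighboring components, and this is where the global nature of $\sF$ (defined on a neighborhood of all of $H$) together with the extension property of leaves through Lemma~\ref{DF:lemma} does the essential work. Once every component of $H^*$ is shown to lie in $A$, the conclusion that $\sF$ extends the Levi-foliation of all of $H^*$ follows immediately.
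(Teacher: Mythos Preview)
Your overall strategy---show that the agreement locus is defined by the vanishing of a real-analytic object on $H$ and then invoke an identity principle---is exactly right, and your treatment of a single connected component of $H^*$ is fine.  The gap is in the bridge across components, and the leaf-accumulation argument you sketch via Lemmas~\ref{F:lemma} and~\ref{dimnleaf:lemma} is both vague and, as stated, does not obviously force agreement on a \emph{new} component: a limit of leaves from $H_1'$ is a subvariety of $\overline{H_1'}^{\mathit{rel}}$, and there is no mechanism in your outline that transports the proportionality $\omega \wedge df_2 = 0$ onto a distinct component $H_2'$ with its own local normal form $f_2$.

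The paper's proof avoids this difficulty entirely by a change of viewpoint you are one step away from.  Instead of the local holomorphic $f$ with $H^* = \{\Im f = 0\}$, use the real-analytic defining function $r$ of $H$ itself and consider $\partial r \wedge \omega$.  At nonsingular points $\partial r$ is nonzero and proportional to $df$, so ``$\sF$ extends the Levi-foliation near $q$'' is exactly ``$\partial r \wedge \omega$ vanishes on $H$ near $q$.''  The crucial gain is that $\partial r \wedge \omega$ is a real-analytic object defined on a full neighborhood of $H$, including the singular locus.  Its vanishing on the $(2N-1)$-dimensional open piece $G \subset H$ together with irreducibility of $H$ (as a real-analytic subvariety, via Lemma~\ref{lemma:irredlemma}) forces vanishing on all of $H$; no separate argument across components of $H^*$ is needed.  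Replacing $df$ by $\partial r$ is the single missing idea that collapses your third paragraph to one line.
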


\begin{proof}
By analytic continuation we see that $\sF$ extends the foliation
of the whole topological component of $H^*$ that contains $G$.

Therefore, it is enough to show that if $H$ is irreducible
as a germ at some point $p \in H$ and $\sF$ extends the Levi-foliation
of some topological component $H'$ of $H^*$ such that
$p \in \overline{H'}^{\mathit{rel}}$, then $\sF$ extends the Levi-foliation of $H^*$
near $p$.  The global result then follows.

In some small neighborhood $U$ of $p$, $\sF$ is defined by a 1-form
$\omega$.  Suppose that $r(z,\bar{z})$ is the defining function for $H$
in $U$ and suppose that $H \cap U$ is irreducible.
That $\sF$ extends the Levi-foliation of $H' \cap U$ is the
same as saying that $\partial r \wedge \omega$ vanishes on $H' \cap U$.
As $H \cap U$ is irreducible, then
$\partial r \wedge \omega$ must vanish on all of $H \cap U$ and hence the
result follows.
\end{proof}

The following lemma is proved in \cite{linsneto:note}, although it is
not stated as a separate theorem.  We state the theorem in a more general
setting and so we reprove it here for completeness.  A Riemann domain over
$\C^N$ is a path-connected Hausdorff space $U$ together with a local homeomorphism
$\pi \colon U \to \C^N$.  An envelope of meromorphy of $U$ is
a Riemann domain $\widehat{U}$ such that any meromorphic function on $U$
extends to a meromorphic function on $\widehat{U}$.

\begin{lemma} \label{lemma:extendfol}
Let $U$ be a connected Riemann domain over $\C^N$, $N \geq 2$,
and let $\widehat{U}$ be the envelope of meromorphy of $U$.
Let $\sF$ be a possibly singular codimension one holomorphic foliation on
$U$.  Then $\sF$ extends to a possibly singular codimension one holomorphic
foliation on $\widehat{U}$.
\end{lemma}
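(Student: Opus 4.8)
The plan is to encode the foliation $\sF$ by global meromorphic data on $U$ and then rebuild a foliation on $\widehat{U}$ from the extended data, the point being that meromorphic functions extend by the very definition of the envelope of meromorphy. The key structural observation is that the components $\pi_1,\dots,\pi_N \colon \widehat{U} \to \C$ of the local homeomorphism $\pi$ are globally defined holomorphic functions, so with $z_j := \pi_j$ the forms $dz_1,\dots,dz_N$ give a global holomorphic frame for the cotangent bundle of $\widehat{U}$ (and of $U \subset \widehat{U}$). Relative to this frame every holomorphic $1$-form is written uniquely as $\sum_j a_j\, dz_j$, which lets me speak of the coefficients of a foliation globally.

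First I would describe $\sF$ by its \emph{slopes}. On each chart write the defining form as $\omega_\iota = \sum_{j=1}^N a^\iota_j\, dz_j$, arranging by dividing out common divisors that $\sing(\sF)$ has codimension at least $2$ in each chart. Since $\omega_\iota$ and $\omega_\kappa$ are proportional on overlaps, the ratios $a^\iota_j/a^\iota_k$ are independent of $\iota$, so the functions $m_j := a_j/a_1$ (with $m_1 \equiv 1$) are globally well defined and meromorphic on $U$. These slopes determine $\sF$, because away from $\sing(\sF)$ the tangent distribution is the common kernel of the forms, which depends only on the projective point $[a_1:\cdots:a_N] = [1:m_2:\cdots:m_N]$.

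Next I would extend and rebuild. By the defining property of $\widehat{U}$, each $m_j$ extends to a meromorphic function $\widehat{m}_j$ on $\widehat{U}$. To recover a foliation I work locally: near any point of $\widehat{U}$ choose a common holomorphic denominator $g \not\equiv 0$ with $\widehat{m}_j = f_j/g$, set $\omega := \sum_j f_j\, dz_j$, and divide the $f_j$ by their greatest common divisor so that the zero set of $\omega$ has codimension at least $2$. If $\omega$ and $\omega'$ are two such forms on overlapping patches, their coefficient tuples have the same ratios $\widehat{m}_j$, hence are proportional by a meromorphic factor $\lambda$; since both tuples are holomorphic with no common factor, the polar and zero sets of $\lambda$ have codimension at least $2$, so by Riemann extension $\lambda$ is a nonvanishing holomorphic function. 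Thus $\{\omega\}$ satisfies the proportionality condition and defines a codimension-one holomorphic distribution on $\widehat{U}$ that restricts on $U$ to the distribution of $\sF$.

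The step I expect to require the most care is verifying complete integrability on all of $\widehat{U}$. On $U$ each rebuilt $\omega$ has the same kernel as the original $\omega_\iota$, so $\omega = h\,\omega_\iota$ for a nonvanishing holomorphic $h$, and then $\omega \wedge d\omega = h^2\,\omega_\iota \wedge d\omega_\iota + h\,\omega_\iota \wedge dh \wedge \omega_\iota = 0$, so integrability is inherited on $U$. Under the proportionalities above $\omega \wedge d\omega$ transforms by a nonvanishing holomorphic factor, so its vanishing is a globally consistent condition on $\widehat{U}$; it holds on the nonempty open set $U$, and since $\widehat{U}$ is connected the identity theorem, applied patch by patch along a chain of charts covering $\widehat{U}$, forces $\omega \wedge d\omega \equiv 0$ throughout. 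Therefore $\{\omega\}$ is a codimension-one holomorphic foliation on $\widehat{U}$ extending $\sF$.
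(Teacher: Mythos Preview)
Your approach is essentially the same as the paper's: both encode $\sF$ by the global meromorphic ratios of the coefficients of the defining $1$-forms in the frame $dz_1,\dots,dz_N$, extend those ratios to $\widehat{U}$, and then clear denominators locally to rebuild a holomorphic integrable $1$-form on each patch. Your treatment is in fact more complete than the paper's on one point: you explicitly verify $\omega\wedge d\omega=0$ on $\widehat{U}$ via the identity theorem, whereas the paper leaves this implicit.

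There is one small gap. You set $m_j:=a_j/a_1$ without checking that $a_1\not\equiv 0$; but it can easily happen that $a^\iota_1\equiv 0$ on every chart (e.g.\ the foliation $dz_2=0$), in which case the $m_j$ are not meromorphic functions at all. The paper handles this by first observing that since the $\omega_\iota$ are proportional on overlaps and $U$ is connected, there must exist \emph{some} index $j_0$ with $a^\iota_{j_0}\not\equiv 0$ for all $\iota$, and then uses that coordinate as the denominator. Once you insert this one-line fix, your argument goes through and matches the paper's.
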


\begin{proof}
The foliation $\sF$ is defined locally by completely integrable 1-forms; there exists
a covering of $U$ by open sets $\{ U_\iota \}$ and 1-forms
$\{ \omega_\iota \}$ such that $\omega_\iota = 0$
define the leaves of $\sF$.  When
$U_{\iota\kappa} = U_\iota \cap U_\kappa \not= \emptyset$, there also
exist functions $\{ h_{\iota\kappa} \}$ in $\sO^*(U_{\iota\kappa})$
such that
$\omega_\iota = h_{\iota\kappa} \omega_\kappa$ on $U_{\iota\kappa}$.
We can assume
that the codimension of the singularity of $\sF$ is 2 or greater.

The covering of $U$ can be such that $\pi$ is a homeomorphism
of $U_\iota$ onto $\pi(U_\iota)$ and so we can think of each $U_\iota$
as an open subset of $\C^N$.
We write
\begin{equation}
\omega_\iota = \sum_{j=1}^N g_j^\iota dz_j .
\end{equation}
We note that when $U_{\iota\kappa}$ is not empty then
for all $j$ we have
\begin{equation} \label{eq:compat}
g_j^\iota = h_{\iota\kappa}g_j^\kappa .
\end{equation}
As $\widehat{U}$ is connected, it follows that there
exists a $j$ such that for all $\iota$ we have
$g_j^\iota \not\equiv 0$.  We can suppose that $j=N$.

For every $j=1,\ldots,N-1$
we have meromorphic functions $f_j^\iota = 
g_j^\iota / g_N^\iota$ defined on $U_\iota$.  By \eqref{eq:compat}
on $U_{\iota\kappa}$ we have
$f_j^\iota = f_j^\kappa$ for all $j=1,\ldots,N-1$.
As $U$ is connected, for each $j=1,\ldots,N-1$, there exists a well-defined
meromorphic function $f_j$ on $U$.

Every meromorphic function on $U$
extends to a meromorphic function on $\widehat{U}$.  Thus
we have a meromorphic function $f_j$ on $\widehat{U}$
such that $f_j = f_j^\iota$ on $U_\iota$.

Now we consider the meromorphic 1-form
\begin{equation}
\eta = dz_N + \sum_{j=1}^{N-1} f_j dz_j .
\end{equation}
We can cover $\widehat{U}$ by polydiscs
$\{ \widehat{U}_\kappa \}$.
In each $\widehat{U}_\kappa$
we find a nonzero holomorphic function $\varphi_\kappa$
such that $\varphi_\kappa \eta$ has only removable singularities.
Thus we obtain a 1-form $\widehat{\omega}_\kappa$ on $\widehat{U}_\kappa$ 
that equals to $\varphi_\kappa \eta$ where that makes sense,
thus $\widehat{\omega}_\kappa$ is proportional to $\eta$ outside the
poles of the
$f_j$, and if $\widehat{U}_\kappa$ intersects
$U_\iota$, then $\eta$ is proportional to $\omega_\iota$ on $U_\iota$
outside of the poles of the $f_j$.  Therefore, $\{ \widehat{\omega}_\kappa
\}$ extend the foliation on $U$ to $\widehat{U}$.
\end{proof}

We have the following result about extending a foliation
of $H^*$.
Let $U \subset \C^N$ be a (euclidean) Hartogs figure, that is
\begin{equation}
U = \bigl(V' \times \Delta(r) \bigr)
\cup \bigl(V \times ( \Delta(r) \setminus \overline{\Delta(r')}) \bigr) ,
\end{equation}
where $V' \subset V \subset \C^{N-1}$ are two polydiscs
and $\Delta(r) \subset \C$ is a disc of radius $r$, and $0 < r' < r$.
By a theorem of Levi (see~\cites{Siu:extbook, JarnickiPflug:book})
the envelope of meromorphy of $U$ is
$\widehat{U} = V \times \Delta(r)$.
A generalized Hartogs figure $K \subset \C^N$ is a set such that
there exists a $\widehat{K} \supset K$ together with
a biholomorphic map $f \colon \widehat{K} \to \widehat{U}$,
and $f(K) = U$, where $U$ is a 
(euclidean) Hartogs figure in dimension $N$ as above.
Then $\widehat{K}$ is the envelope of meromorphy of $K$.

\begin{lemma} \label{lemma:folextendH}
Suppose that $H \subset \C^N$ is a Levi-flat local real-hypervariety
that is irreducible as germ at $p \in \overline{H^*}^{\mathit{rel}}$.
Suppose that there exists a nonsingular complex submanifold $W
\subset \overline{H^*}^{\mathit{rel}}$ of (complex) dimension at least 2,
such that there exists a generalized Hartogs figure $K \subset W \setminus
(\overline{H^*}^{\mathit{rel}})_s$ and such that $p \in \widehat{K}$.

Then there exists
a possibly singular codimension one holomorphic foliation $\sF$
extending the foliation of $H$ near $p$.
\end{lemma}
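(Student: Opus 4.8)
The plan is to build the extending foliation first near the nonsingular piece $K$, where the Levi-foliation is already available, and then transport it to $p$ through the envelope of meromorphy using the Hartogs figure hypothesis. Since $K \subset W \setminus (\overline{H^*}^{\mathit{rel}})_s$, every point of $K$ is a nonsingular point of $\overline{H^*}^{\mathit{rel}}$, hence lies in $H^*$, and there $\overline{H^*}^{\mathit{rel}}$ is a nonsingular real-analytic Levi-flat hypersurface of dimension $2N-1$. By Proposition~\ref{prop:gotfol} the Levi-foliation extends near each such point to a nonsingular codimension one holomorphic foliation of a full neighborhood in $\C^N$, and by the local uniqueness of such an extension (the remark preceding Proposition~\ref{prop:gotfol}) these local foliations agree on overlaps. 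As $K$ is a Hartogs figure it is connected, so I obtain a single nonsingular codimension one holomorphic foliation $\sF$ defined on a connected neighborhood $\Omega \subset \C^N$ of $K$, extending the Levi-foliation of $H^*$ on $\Omega \cap H^*$.

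Next I would produce a genuine $N$-dimensional Hartogs figure inside $\Omega$ whose meromorphy envelope reaches $p$. Let $k = \dim W$. Near $\widehat{K}$ choose holomorphic coordinates that straighten $W$ to a coordinate $k$-plane, so that the biholomorphism $f \colon \widehat{K} \to \widehat{U}$ identifies $K$ with a euclidean Hartogs figure $U \subset \C^k$ and $\widehat{K}$ with $\widehat{U} = V \times \Delta(r)$. Thickening in the $N-k$ transverse directions by a small polydisc $\Delta^{N-k}(\delta)$ gives a set $\tilde{K}$ which, after regrouping the coordinates, is again a euclidean Hartogs figure in $\C^N$; its envelope of meromorphy is $\widehat{\tilde{K}} = \widehat{K} \times \Delta^{N-k}(\delta)$. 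For $\delta$ small enough $\tilde{K} \subset \Omega$, and since $p \in \widehat{K}$ we have $p \in \widehat{\tilde{K}}$, so in fact a full neighborhood of $p$ in $\C^N$ lies in $\widehat{\tilde{K}}$. Because $\tilde{K} \subset \Omega$, the envelope of meromorphy $\widehat{\Omega}$ of $\Omega$ also contains this neighborhood of $p$.

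With this in hand I would apply Lemma~\ref{lemma:extendfol} to the connected Riemann domain $\Omega$: the foliation $\sF$ extends to a possibly singular codimension one holomorphic foliation on $\widehat{\Omega}$, hence in particular to a neighborhood of $p$. It remains to identify this extension with an extension of the Levi-foliation of $H$. On the nonsingular piece $G = \Omega \cap H^*$ near $K$ the extended $\sF$ still restricts to the Levi-foliation, and $G$ is an open subset of $H^*$; since $\widehat{K}$ is connected and $\widehat{K} \setminus (\overline{H^*}^{\mathit{rel}})_s$ consists of $H^*$-points accumulating at $p$, the point $p$ lies in the relative closure of the topological component $H'$ of $H^*$ that contains $G$. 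As $H$ is irreducible as a germ at $p$, Lemma~\ref{lemma:folonall} then yields that $\sF$ extends the Levi-foliation of all of $H^*$ near $p$, which is the assertion.

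The main obstacle is the middle step: passing from a foliation that is only known to exist on a neighborhood of the \emph{lower-dimensional} set $K$ to one defined near $p$. This is exactly where the Hartogs figure hypothesis is used, via the identity $\widehat{\tilde{K}} = \widehat{K} \times \Delta^{N-k}(\delta)$ for the meromorphy envelope of the thickened figure; verifying that the thickening is genuinely a euclidean Hartogs figure in $\C^N$ and that its envelope contains a neighborhood of $p$ is the crux, after which Lemmas~\ref{lemma:extendfol} and~\ref{lemma:folonall} close the argument.
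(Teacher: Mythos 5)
Your overall architecture matches the paper's: extend the Levi-foliation to a nonsingular holomorphic foliation near the nonsingular part via Proposition~\ref{prop:gotfol}, fatten $K$ to an $N$-dimensional Hartogs figure, push the foliation across the envelope of meromorphy with Lemma~\ref{lemma:extendfol}, and finish with irreducibility via Lemma~\ref{lemma:folonall}. Your explicit computation of the envelope of the thickened figure is correct and is exactly what the paper means by ``fattening.''

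There is, however, a genuine gap in your last step, caused by one deviation from the paper: you build the initial foliation only on a neighborhood $\Omega$ of $K$, whereas the paper builds it on a neighborhood $V$ of the \emph{entire} connected component $H'$ of $(\overline{H^*}^{\mathit{rel}})_{reg}$ that contains $K$'s component and has $p$ in its closure. After the extension you therefore have \emph{a} foliation defined near $p$, but you have not shown that it restricts to the Levi-foliation on any $(2N-1)$-dimensional open piece of $H^*$ near $p$ --- agreement is only known near $K$, which may be far from $p$. Lemma~\ref{lemma:folonall} cannot be invoked as you do: it assumes $\sF$ is defined on a neighborhood of $H$ (its proof analytically continues the agreement along the whole component $H'$), and your $\sF$ lives only on $\widehat{\Omega}$; conversely, if you localize $H$ to a small neighborhood $U'$ of $p$ so that $\sF$ is defined on a neighborhood of it, then $G$ is no longer a subset of that localized $H^*$. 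Establishing $p \in \overline{H'}^{\mathit{rel}}$ is not a substitute for establishing that $\sF$ agrees with the Levi-foliation on $H' \cap U'$, which is the input the irreducibility argument ($\partial r \wedge \omega = 0$ on an open subset of $H^*$, hence on all of $H \cap U'$) actually requires. The gap is fixable either by the paper's device --- take the neighborhood of all of $H'$, so that the pre-extension foliation already agrees with the Levi-foliation at points of $H'$ arbitrarily close to $p$ --- or by explicitly continuing the agreement along $\widehat{K} \setminus (\overline{H^*}^{\mathit{rel}})_s$, a connected subset of $H^*$ joining $K$ to points near $p$ that stays inside the domain of your extended foliation; but as written the identification of the extended foliation with the Levi-foliation near $p$ is missing.
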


\begin{proof}
Let us take a connected component $H'$ of the
nonsingular points $(\overline{H^*}^{\mathit{rel}})_{reg}$
such that $p$
lies in the closure of $H'$, and $H'$ contains the component
of $W \setminus (\overline{H^*}^{\mathit{rel}})_s$ that contains $K$.
We define
a possibly singular codimension one holomorphic foliation
in a neighborhood $V$ of $H'$,
see Proposition~\ref{prop:gotfol}.

We can now ``fatten'' the Hartogs figure $K$ to
find a Hartogs figure $K' \subset V$ of dimension $N$.
As the envelope of meromorphy of $K'$ is $\widehat{K'}$,
we extend the foliation $\sF$ past $p$
by Lemma~\ref{lemma:extendfol}.

We have a possibly singular
holomorphic foliation $\sF$
of a neighborhood of $p$ that extends the foliation
of $H'$.  By Lemma~\ref{lemma:folonall} the foliation in fact agrees
with the foliation on all of $H^*$ (near $p$) as $H$ is irreducible at $p$.
\end{proof}

We can now prove Theorem~\ref{thm:extsmallsing}.  That is,
if the germ $(H,p)$ is irreducible and $\dim H_s < 2N-4$
or $p$ is not a leaf-degenerate point and $\dim H_s = 2N-4$, then
the Levi-foliation extends to possibly singular codimension one
holomorphic foliation in a neighborhood of $p$.

\begin{proof}[Proof of Theorem~\ref{thm:extsmallsing}]
We note that for a Riemann domain over $\C^N$, the domain
of meromorphy is a Stein manifold (See e.g.\ Theorem 3.6.6
\cite{JarnickiPflug:book}).  Therefore, it must be holomorphically convex.
%If a complex manifold $M$ is holomorphically convex then
%it is not hard to prove that if we take a sequence of closed analytic discs
%$\overline{\Delta}_j \subset M$ such that
%$\overline{\bigcup_j \partial \Delta_j} \subset M$, then 
%$\overline{\bigcup_j \Delta_j} \subset M$.

Suppose that $\dim H_s \leq 2N-4$.
Take a neighborhood $U$ of $p$ in which we can apply Lemma~\ref{DF:lemma},
and such that $H \cap U$ is irreducible.  We can assume that $H$
is closed in $U$.

Let us first suppose that $N=2$, $H_s = \{ p \}$,
and $H$ is not leaf-degenerate at $p$.
Take an irreducible complex subvariety $W' \subset
\overline{H^*}^{\mathit{rel}}$
by Lemma~\ref{dimnleaf:lemma}.  Pick the 
connected topological component $H'$ of $H^*$ such that
$W' \setminus \{ p \} \subset H'$.
%Now let $W$ be the union
%of all complex subvarieties contained in $\overline{H'}^{\mathit{rel}}$.
Define a holomorphic foliation $\sF$ on a neighborhood $\Omega$ of $H'$
by Proposition~\ref{prop:gotfol}.

As $H$ is not leaf-degenerate at $p$, then there exists
a sequence of nonsingular leaves $L_j \subset H'$
such that $p$ is a cluster point of this sequence, but such
that $p \notin L_j$ for all $j$.  By Lemma~\ref{DF:lemma},
the $L_j$ must be closed in $U$ and they must be nonsingular (as a singular
point of $L_j$ would mean a singular point of $H$).  Now consider
$L_j$ intersected with a small ball $B$ centered at $p$.
%Let $K = (W \cap \partial B) \cup \bigl( \bigcup_j (L_j \cap \partial B) \bigr)$.
Let $K = \bigcup_j (L_j \cap \partial B)$.  
Note that $K \subset \subset \Omega$, in particular
$K$ is a positive distance away from $p$.  However,
the holomorphic hull $\widehat{K}$ (with respect to $\mathcal{O}(\Omega)$)
contains the sets $L_j \cap B$,
and $p$ is in the cluster set of the $L_j \cap B$.
If $p \in \Omega$ then we were already done.  If $p \notin \Omega$
then we see that no Riemann domain $\Omega'$ containing $\Omega$
can be Stein unless $p \in \Omega'$.  Thus the envelope of meromorphy
of $\Omega$ contains $p$ and hence a whole neighborhood of $p$.
We finish by applying Lemma~\ref{lemma:extendfol}.

When $N > 2$ and $p$ is not leaf-degenerate we proceed similarly.

If $\dim H_s < 2N-4$, then $H$ is not leaf-degenerate at any point
by Lemma~\ref{lemma:sprimeprop}.
\end{proof}

When the foliation extends, we can show that the singular set
must be Levi-flat.

\begin{lemma} \label{lemma:folandsing}
Suppose that $H \subset \C^N$ is a Levi-flat local real-hypervariety,
and
$\sF$ is a possibly singular codimension one holomorphic foliation
extending the Levi-foliation of $H$, then
$E = (\overline{H^*}^{\mathit{rel}})_s$ is Levi-flat wherever $E$ is a
CR submanifold.
\end{lemma}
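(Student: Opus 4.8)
The plan is to foliate $E$ by pieces of the leaves of $\sF$ and to show these pieces are complex submanifolds. Fix a point $q \in E$ near which $E$ is a CR submanifold. By Lemma~\ref{lemma:sprimeprop}, through every point of $\overline{H^*}^{\mathit{rel}}$ near $q$ there passes a complex hypersurface $W \subset \overline{H^*}^{\mathit{rel}}$; at the smooth points of $W$ lying in $H^*$ the set $W$ is the Levi-leaf, so $\omega|_{TW} = 0$ there, and by analyticity $W$ is an integral manifold of $\omega$ everywhere it is smooth. Thus the complex hypersurfaces contained in $\overline{H^*}^{\mathit{rel}}$ are exactly the (pieces of) leaves of $\sF$ that lie in $\overline{H^*}^{\mathit{rel}}$.

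First suppose $q \notin \sing(\sF)$. Then near $q$ the foliation $\sF$ is nonsingular, so I straighten it, choosing holomorphic coordinates vanishing at $q$ in which $\omega = dz_N$ and the leaves are the hyperplanes $\{z_N = c\}$. Any complex hypersurface $W$ that is an integral manifold of $\omega$ satisfies $T^{1,0}W \subset \ker dz_N$ and has dimension $N-1$, hence is an open piece of some $\{z_N = c\}$; since $W \subset \overline{H^*}^{\mathit{rel}}$, each such hyperplane piece lies in $\overline{H^*}^{\mathit{rel}}$. Running this through every point of $\overline{H^*}^{\mathit{rel}}$ near $q$ shows that locally $\overline{H^*}^{\mathit{rel}} = \{ z_N \in T \}$ for a subset $T \subset \C$; comparing dimensions, since $\overline{H^*}^{\mathit{rel}}$ has real dimension $2N-1$, forces $T$ to be a real-analytic curve. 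Consequently $E = (\overline{H^*}^{\mathit{rel}})_s = \{ z_N \in T_s \}$ is a finite union of complex hyperplanes $\{z_N = c_0\}$ with $c_0 \in T_s$; in particular $E$ is a complex manifold near $q$, and a complex manifold is Levi-flat.

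To reach an arbitrary CR point $q$ of $E$, recall that $\sing(\sF)$ is a complex subvariety of codimension at least two. If $E \not\subset \sing(\sF)$, then $E \setminus \sing(\sF)$ is dense in $E$, and by the previous paragraph $E$ is a complex manifold near each of its points off $\sing(\sF)$; Corollary~\ref{cor:localcplxisglobalcplx} then shows that $E$ is a local complex analytic subvariety, so at any CR point it is a complex submanifold and hence Levi-flat.

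The remaining, and most delicate, case is $E \subset \sing(\sF)$, which is exactly what occurs when $E$ is a generic Levi-flat submanifold of dimension $2N-2$: here the straightening is unavailable and $E$ is genuinely not complex. If $E$ is generic this case is already settled by Lemma~\ref{lemma:genericsing}. If $E$ is not generic, I would pass to the intrinsic complexification $X$ of $E$ at $q$ (the smallest germ of a complex submanifold containing $E$, in which $E$ is automatically generic) and apply Lemma~\ref{lemma:interlemma} to $H \cap X$: this either exhibits $q$ as a leaf-degenerate point, places us inside a genuinely lower-dimensional Levi-flat real-hypervariety of $X$ to which induction on $N$ applies, or forces $X \subset H$ and hence $E = X$ complex. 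Carrying out this reduction cleanly, and in particular verifying that the induction hypothesis applies to $H \cap X$ and returns the Levi-flatness of $E$, is the main technical obstacle.
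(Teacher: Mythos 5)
Your first two cases are fine and essentially match the paper's (much terser) treatment: where $\sF$ is nonsingular, the leaves through $\overline{H^*}^{\mathit{rel}}$ straighten to $\{z_N = c\}$, so $\overline{H^*}^{\mathit{rel}}$ is locally $C \times \C^{N-1}$ and $E$ is complex, hence Levi-flat. But the case you defer -- $E \subset \sing(\sF)$ with $E$ neither complex nor generic -- is not a loose end; it is the entire content of the lemma as it is used in the proof of the main theorem (the generic case is disposed of there by Lemma~\ref{lemma:genericsing} before this lemma is ever invoked). Your proposed reduction via the intrinsic complexification $X$ and Lemma~\ref{lemma:interlemma} has several unverified steps that do not obviously go through: $H \cap X$ need not be a Levi-flat real-hypervariety of $X$, $\sF$ need not restrict to a foliation of $X$ extending its Levi-foliation (since $X$ need not be $\sF$-invariant), and even if the induction applied to $H \cap X$ one would still need to identify $E$ with (a CR piece of) the singular set of $\overline{(H\cap X)^*}$. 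You correctly flag this as the main obstacle, but that means the proof is incomplete precisely where it matters.

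The paper closes this case by a different mechanism. Since $\sF$ is nonsingular along $(\overline{H^*}^{\mathit{rel}})_{reg}$, one has $\sing(\sF) \cap \overline{H^*}^{\mathit{rel}} \subset E$, and since $E$ is assumed not complex, the first case forces equality: $E = \sing(\sF) \cap \overline{H^*}^{\mathit{rel}}$. Hence for every complex hypersurface $W \subset \Sigma'_p(H)$ the set $E \cap W = \sing(\sF) \cap W$ is a \emph{complex} subvariety. Now take $p$ where $\Orb_p(E)$ has maximal dimension; Lemma~\ref{lemma:orbinsigma} (which you do not use at all) places $\Orb_p(E)$ inside some such $W$, so $\Orb_p(E) = \Orb_p(E \cap W) = E \cap W$ is complex, and a CR manifold whose maximal-dimensional local CR orbit is complex is Levi-flat. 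The missing idea in your write-up is exactly this pair of observations: that $E$ coincides with the trace of $\sing(\sF)$ on $\overline{H^*}^{\mathit{rel}}$, and that the CR orbit of $E$ is trapped in a leaf. One smaller point: your appeal to Corollary~\ref{cor:localcplxisglobalcplx} in the second case is stated for a set that is a complex manifold near \emph{every} regular point, whereas you only have this on the dense complement of $\sing(\sF)$; this is repairable (the $J$-invariance of $T_qE$ is a closed condition on the connected manifold $E$), but as written it is not an application of that corollary.
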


\begin{proof}
We can assume that 
$E$ is a connected real-analytic CR submanifold.  Let us suppose
for contradiction that $E$ is not a Levi-flat submanifold (that of course
also means $E$ is not a complex submanifold).

Take $p \in E$.
As we are interested in $(\overline{H^*}^{\mathit{rel}})_s$, we can
without loss of generality
assume that all irreducible components of the germ $(H,p)$ are
of dimension $2N-1$.
If the foliation is nonsingular at $p$,
then it is easy to show that any $2N-1$ dimensional component of $H$ at
$p$ must be locally biholomorphic to $C \times \C^{N-1}$ for 
a one dimensional real-analytic curve $C \subset \C$.  We simply look
in a coordinate patch of the foliation where the leaves are
given by $\{ z_N = c \}$ for a constant $c$, and we note
that the leaves of the foliation must agree with leaves of $H^*$.
Therefore
$E$ must be either empty or a complex hypersurface.

Therefore suppose that the foliation $\sF$ is singular at $p \in E$.
The singular set $\sing(\sF)$ of the foliation is a complex subvariety.
As noted above
we can assume that $\sing(\sF)$ is of complex dimension $N-2$ or less and
that no point of $\sing(\sF)$ is a removable singularity.

Since at the nonsingular points $(\overline{H^*}^{\mathit{rel}})_{reg}$
the foliation must be nonsingular we see that
$\sing(\sF) \cap \overline{H^*}^{\mathit{rel}}$ must be a subset of $E$.

As we are assuming that $E$
is not a complex hypersurface, then $E = \sing(\sF) \cap
\overline{H^*}^{\mathit{rel}}$,
as where $\sF$ is nonsingular $E$ would be a complex hypersurface
or empty.  Thus for any complex hypersurface $W \subset \Sigma'_p(H)$
we obtain
\begin{equation}
E \cap W = \sing(\sF) \cap W .
\end{equation}
$\sing(\sF) \cap W$ is a complex subvariety.  We can assume that
$\Orb_p(E)$ is of maximal dimension.  By Lemma~\ref{lemma:orbinsigma}
we have that $\Orb_p(E) \subset W$ for some $W$.
Thus $\Orb_p(E) = \Orb_p(E \cap W)$ and 
$\Orb_p(E \cap W) = E \cap W$ as it is complex.
Therefore $\Orb_p(E)$ is a complex subvariety and $E$
must be Levi-flat (as $\Orb_p(E)$ was of maximal dimension).
\end{proof}

%%%%%%%%%%%%%%%%%%%%%%%%%%%%%%%%%%%%%%%%%%%%%%%%%%%%%%%%%%%%%%%%%%%%%%%%%%%%

\section{Proof of the Theorem}

First a technical lemma.

\begin{lemma} \label{lemma:gethartogs}
Suppose that $V$ is a complex manifold of (complex) dimension 2 or more.
Suppose that $M \subset V$ is a real-analytic CR submanifold,
which is not Levi-flat (therefore also not complex analytic).

Then there exists a point $p \in M$ and
a generalized Hartogs figure $K \subset V \setminus M$ such that
$p \in \widehat{K}$.
\end{lemma}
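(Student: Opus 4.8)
The plan is to reduce the statement to the classical Hartogs--Levi extension phenomenon across a pseudoconcave hypersurface. Concretely, I would produce, near a suitable point of $M$, a real-analytic real hypersurface $M'$ with $M \subset M'$ that fails to be pseudoconvex from one of its two sides at a point $p$. Once this is in hand, the continuity principle (Kontin\"uitatssatz) applies: if one side $\Omega$ of $M'$ is not pseudoconvex at $p$, then holomorphic functions on $\Omega$ extend across $p$, equivalently there is a generalized Hartogs figure $K \subset \Omega$ with $p \in \widehat{K}$ (see \cites{Siu:extbook,JarnickiPflug:book}). Since $\Omega \subset V \setminus M' \subset V \setminus M$, such a $K$ is exactly what the lemma requires. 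Everything here is local, so I would work in a holomorphic chart and regard $V$ as an open subset of $\C^N$.

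To produce $M'$ I would first move to a good point of $M$. Since $M$ is not Levi-flat, its Levi form does not vanish identically (at a point of maximal orbit dimension one has $q<d$ in the notation of Theorem~\ref{nicenormcoord}, so the minimal orbit is a non-complex CR manifold and cannot have identically vanishing Levi form); hence the open set of points where the Levi form is nonzero is nonempty. Intersecting it with the dense open set of points of maximal CR-orbit dimension, I choose such a point $p$ and apply Theorem~\ref{nicenormcoord} to obtain coordinates $(z,w,w',w'')$ in which $M$ is given by $\Im w=\varphi(z,\bar z,\Re w,\Re w')$, $\Im w'=0$, $w''=0$, with $M$ generic in its intrinsic complexification $\{w''=0\}$. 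Non-vanishing of the Levi form at $p$ means that some component $\varphi_1$ of the $w$-block has nonzero complex Hessian $\partial^2\varphi_1/\partial z_\alpha\partial\bar z_\beta$ at $p$ (the $w'$-equations being flat).

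Now set $\rho := \Im w_1-\varphi_1(z,\bar z,\Re w,\Re w')$, regarded as a real-analytic function on all of $V$ (it does not involve $w''$), and let $M':=\{\rho=0\}$. The $\Im w_1$ term forces $d\rho\neq 0$ near $p$, so $M'$ is a real hypersurface, and $M\subset M'$ because $\rho$ vanishes on $M$. The Levi form of $M'$ at $p$ restricted to the $z$-directions equals $-\partial^2\varphi_1/\partial z_\alpha\partial\bar z_\beta$, a nonzero Hermitian form, so it has a nonzero eigenvalue; choosing the side of $M'$ on which this eigenvalue is negative makes that side non-pseudoconvex at $p$, and the continuity principle above yields the desired $K$.

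The main obstacle, and the real content, is the passage from ``$M$ is a non-Levi-flat CR submanifold,'' where $M$ may have high codimension and only a vector-valued Levi form, to ``a single real hypersurface $M'\supset M$ with an honest concave side.'' Extracting one component $\rho=\Im w_1-\varphi_1$ of the normal form and extending it trivially in the complexification variables $w''$ accomplishes this, and crucially keeps $M'$ (hence its pseudoconcave side) full-dimensional in $V$, so that the Hartogs figure genuinely avoids all of $M$ rather than a mere slice of it. The remaining ingredient — that a real hypersurface pseudoconcave from one side at $p$ admits a Hartogs figure on that side whose envelope of meromorphy contains $p$ — is the classical Hartogs--Levi extension, which I would simply cite.
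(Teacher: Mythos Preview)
Your argument is correct and takes a genuinely different route from the paper's. The paper proceeds by \emph{slicing}: it intersects $M$ with a suitably chosen $2$-dimensional complex submanifold $W$ and then runs a short case analysis on $M\cap W$ (dimension $0$ or $1$ when $M$ is not generic; totally real when $M$ is generic of codimension $\ge 2$; a strictly pseudoconvex curve in $W$ when $M$ is a hypersurface), constructing a $2$-dimensional Hartogs figure in each case and fattening it to full dimension. You instead \emph{thicken} $M$ to a single real hypersurface $M'\supset M$ by retaining only one scalar equation $\Im w_1=\varphi_1$ from the normal form, and then invoke the classical Levi extension at a boundary point where the Levi form of $M'$ has a nonzero eigenvalue. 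Your approach avoids the case split entirely and makes the reduction to the hypersurface situation conceptually transparent; the paper's approach, in exchange, is more explicit about the shape of the Hartogs figure and does not need to analyze the Levi form of an auxiliary hypersurface in mixed variables.

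One minor remark: your parenthetical justification that the Levi form of $M$ is somewhere nonzero, via the orbit being a non-complex minimal CR manifold, is a bit roundabout. It is cleaner to argue directly: if the Levi form of $M$ vanished identically, the distribution $T^{1,0}M\oplus T^{0,1}M$ would be involutive, its complex integral leaves would coincide with the CR orbits, and $M$ would be Levi-flat. Everything else (the normalization $\varphi(z,0,s,s')\equiv 0$ forcing $d\varphi(0)=0$ so that $d\rho(0)\ne 0$; the complex Hessian of $\rho$ at $0$ reducing to $-\partial^2\varphi_1/\partial z\partial\bar z$ in the $z$-block; and the inclusion $V\setminus M'\subset V\setminus M$) is exactly as you describe.
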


\begin{proof}
This is a local theorem and hence we can assume
that $0 \in M$ and that $V$ is a small neighborhood
of the origin in $\C^k$, where we can apply Theorem~\ref{nicenormcoord}
on $M$.

Let $W \subset V$ be a 2 (complex) dimensional complex submanifold
through the origin,
we look at $M \cap W$.  If we can construct the required Hartogs figure
in $W$, then we can ``fatten'' it up to be of dimension $k$.

If the intersection $M \cap W$ is of dimension 0 or 1, then
it is not hard to construct the required Hartogs figure with $p$
being the origin.
If $M$ is not a generic submanifold, then as it
is not a complex submanifold, then we can
find such a $W$ (simply setting all variables
except $w_1$ and $w_1''$ to zero).

If $M$ is generic, then if the real codimension of $M$ is 2 or more,
we can simply set $z = 0$ and all but $2$ of the $w$ or $w'$ to zero
and we have that $M \cap W$ is a totally real submanifold of $W$.
It is then again not hard to construct the Hartogs figure with $p$
being the origin.

Hence what is left is the case when $M \subset V$ is a hypersurface.
As $M$ is not Levi-flat, then the Levi-form of $M$ is not
identically zero, then there must exist a point $p$ and
an affine complex subspace $W$ of (complex) dimension 2 such, $p \in W$
such that $M \cap W$ is strictly pseudoconvex in $W$ and
hence on the strictly pseudoconcave side of $M \cap W$ we 
can construct the Hartogs figure.
\end{proof}

Let us restate Theorem~\ref{mainthm} for reader convenience.

\begin{thmnonum}
\mainthmbody
\end{thmnonum}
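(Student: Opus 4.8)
The plan is to read the statement as two assertions and handle them separately. The second (``furthermore'') assertion is exactly Lemma~\ref{lemma:genericsing}: once $E := (\overline{H^*}^{\mathit{rel}})_s$ is assumed generic, that lemma gives that $E$ is a generic Levi-flat submanifold of dimension $2N-2$, with no further work. So the substance is the first assertion, and I would work locally, assuming $E$ is a connected real-analytic CR submanifold and arguing by contradiction: suppose $E$ is not Levi-flat (hence in particular not a complex submanifold). The engine of the whole argument is Lemma~\ref{lemma:folandsing}: if the Levi-foliation of $H^*$ extends to a possibly singular codimension one holomorphic foliation near a point of $E$, then $E$ is forced to be Levi-flat there. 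Thus the entire task reduces to \emph{extending the foliation} across a well-chosen point of $E$, which will contradict the assumption.

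First I would dispose of the degenerate situations. If the leaf-degenerate points are dense in $E$, then Lemma~\ref{lemma:degensingarevar} already yields that $E$ is a complex submanifold of dimension $N-2$, hence Levi-flat, and there is nothing to prove. So I may assume the leaf-degenerate points are not dense; since such points are Segre-degenerate (Lemma~\ref{lemma:sprimeprop}) and the Segre-degenerate locus lies in a complex subvariety of dimension $N-2$ (Proposition~\ref{prop:degennm2}), I can move to a point $p \in E$ that is not a Segre-degenerate singularity and at which $\Orb_p(E)$ has maximal dimension. Because $E$ is assumed to be neither generic Levi-flat of dimension $2N-2$ nor complex of dimension $N-1$, Proposition~\ref{prop:tech} applies and lets me replace $H$ by an irreducible germ with the same singular set, so that Lemma~\ref{lemma:folextendH} becomes available.

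The core step uses the CR orbit. Since $p$ carries no degenerate singularity, Lemma~\ref{lemma:orbinsigma} gives $\Orb_p(E) \subset \Sigma'_p(H)$, so the orbit sits inside a complex hypersurface $W \subset \overline{H^*}^{\mathit{rel}}$ of dimension $N-1$ through $p$ (Lemma~\ref{dimnleaf:lemma}). As $E$ is not Levi-flat, $\Orb_p(E)$ is not complex; being a CR orbit it is minimal, and a minimal CR submanifold that is not complex is not Levi-flat. Hence $\Orb_p(E)$ is a non-Levi-flat real-analytic CR submanifold of the complex manifold $W$. When $N \geq 3$ we have $\dim_{\C} W = N-1 \geq 2$, so Lemma~\ref{lemma:gethartogs} produces a point $q \in \Orb_p(E)$ and a generalized Hartogs figure $K \subset W \setminus \Orb_p(E)$ with $q \in \widehat{K}$. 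Granting that $K$ misses all of $E$, Lemma~\ref{lemma:folextendH} extends the Levi-foliation to a neighborhood of $q$, and Lemma~\ref{lemma:folandsing} then forces $E$ to be Levi-flat near $q$; by connectedness and real-analyticity this contradicts the standing assumption. The leftover case $N=2$ I would settle by a direct count: there $\dim_{\R} E \leq 2$, and $E$ must be a complex curve, a totally real submanifold, or a generic submanifold of dimension $2N-2 = 2$ (the last handled by Lemma~\ref{lemma:genericsing}), all of which are Levi-flat. For genuinely small singular sets one could instead invoke Theorem~\ref{thm:extsmallsing} directly, but the orbit argument is uniform.

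The step I expect to be the main obstacle is guaranteeing that the Hartogs figure $K$, built in $W$ to avoid the orbit $\Orb_p(E)$, actually avoids the \emph{entire} singular set $E$, i.e.\ that locally near $q$ one has $E \cap W = \Orb_q(E)$ (this is precisely the hypothesis $K \subset W \setminus (\overline{H^*}^{\mathit{rel}})_s$ that Lemma~\ref{lemma:folextendH} demands). The reason this should hold is that near a point of maximal orbit dimension $E$ is foliated by its CR orbits, each of which lies in a single leaf of $\overline{H^*}^{\mathit{rel}}$ by Lemma~\ref{lemma:orbinsigma}; since $W$ is the leaf carrying $\Orb_q(E)$ and distinct orbits occupy distinct leaves, no other part of $E$ can meet $W$ close to $q$. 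Making this precise, together with checking that the generic, degenerate, and low-dimensional cases all funnel into either ``$E$ is complex'' or ``the foliation extends,'' is where the technical care concentrates; the rest is assembly of the lemmas already established.
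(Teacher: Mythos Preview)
Your overall architecture matches the paper's: dispose of the generic case via Lemma~\ref{lemma:genericsing}, handle dense leaf-degeneracy via Lemma~\ref{lemma:degensingarevar}, and otherwise manufacture a Hartogs figure inside a leaf $W \subset \overline{H^*}^{\mathit{rel}}$ so that Lemmas~\ref{lemma:folextendH} and~\ref{lemma:folandsing} yield the contradiction. The divergence is in \emph{which} submanifold of $W$ you feed to Lemma~\ref{lemma:gethartogs}, and this is precisely where your acknowledged obstacle becomes a genuine gap.

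You apply Lemma~\ref{lemma:gethartogs} with $M = \Orb_p(E)$ and then need the resulting $K \subset W \setminus \Orb_p(E)$ to actually lie in $W \setminus E$. Your justification, that ``distinct orbits occupy distinct leaves,'' is not correct. In the normal coordinates of Theorem~\ref{nicenormcoord}, when $E \not\subset \Sigma'_p(H)$ one may arrange $W = \{w'_1 = 0\}$; then every CR orbit of $E$ with parameter $s'_1 = 0$ lies in $W$, so $E \cap W$ is a whole $(q-1)$-parameter family of orbits, not a single one. Worse, when $E \subset \Sigma'_p(H)$ one has $E \cap W = E$ itself. So there is no reason your Hartogs figure avoids $E$, and the hypothesis of Lemma~\ref{lemma:folextendH} is not met. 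The paper avoids this entirely by applying Lemma~\ref{lemma:gethartogs} to $M = E \cap W$ rather than to the orbit: using the coordinates of Theorem~\ref{nicenormcoord} and the explicit form $W = \{w'_1 = 0\}$ or $W = \{w''_1 = 0\}$, one checks that $E \cap W$ is a CR submanifold that is still not Levi-flat, and then the lemma hands back $K \subset W \setminus (E \cap W) = W \setminus E$ directly.

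There is a second point you skip: Lemma~\ref{lemma:gethartogs} requires $W$ to be a complex \emph{manifold}, whereas Lemma~\ref{dimnleaf:lemma} only produces an irreducible hypervariety. The paper devotes a paragraph to reducing to the case where $\Sigma'_p(H;U)$ is nonsingular: if it is singular, its singular locus lies in $E$, and either it coincides with $E$ (done) or at a nearby point $\Sigma'_q(H)$ is reducible with an $(N-2)$-dimensional singular component inside $E$, forcing the CR dimension of $E$ to be exactly $N-2$ and $\Orb_q(E)$ to be complex, whence $E$ is Levi-flat after all. Without this reduction you cannot invoke Lemma~\ref{lemma:gethartogs} at all.
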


\begin{proof}%[Proof of Theorem~\ref{mainthm}]
Let $E = (\overline{H^*}^{\mathit{rel}})_s$.

If $N=1$, then the theorem has no content and is trivially true.
If $N=2$, then $E$ can be of dimension $1$ or $2$.
Such real subvarieties are automatically
Levi-flat near CR points (either totally-real or complex).
Hence the theorem is true
automatically for $N=2$.  From now on suppose that $N \geq 3$.

We only need to prove that near points where
$E$ is
a real-analytic CR submanifold it is Levi-flat.  Thus we can
assume without loss of generality
that $E$ is a connected real-analytic CR submanifold.

If $E$ is a generic submanifold of $\C^N$, then
by Lemma~\ref{lemma:genericsing} we have that $E$ is a generic
Levi-flat submanifold of codimension $2N-2$
and we are done.  
We thus suppose that $E$ is not a generic submanifold.

By Lemma~\ref{lemma:degensingarevar} we have that if the
set $S$ of leaf-degenerate points of $H$ is dense in $E$,
then $E$ is a complex submanifold of dimension $N-2$.
If the set $S$ is not dense in $E$, we can move to a neighborhood
of a generic point of $E$ and assume that no point of $H$
is leaf-degenerate.

Let us suppose for contradiction
that $E$ is not a complex nor a Levi-flat
submanifold.  Furthermore, by moving
to a generic point $p$ of $E$ we can assume that $\Orb_p(E)$
is of maximal possible dimension.
Near $p$ we can work in a small neighborhood $U$ of $p$
and assume that any germs of complex varieties extend to the whole
neighborhood $U$ by Lemma~\ref{DF:lemma}.  Thus as before, we 
write $\Sigma'_q(H;U)$ when we are talking
about the smallest
(closed) complex subvariety of $U$ contained in $H$ and containing $\Sigma'_q(H)$.
We also simply assume
that $H$ and $E$ are closed subsets of $U$.

If $\Sigma'_p(H;U)$ is singular, then the singular set $S$ of
$\Sigma'_p(H;U)$ would be a subset of $E$, by
Proposition~\ref{prop:nonsingsigma}.
If $(S,p) = (E,p)$ as germs,
then we are done.
We pick another point $q \in \Sigma'_p(H;U) \cap E \setminus S$.
We note that $\Sigma'_q(H)$ must contain (as a germ at $q$) a nonsingular
complex hypersurface as one of its components.  Therefore, either
$\Sigma'_q(H)$ is nonsingular as a germ, or it is reducible and
has a singularity
of complex dimension $N-2$ which must be contained in $E$.
If $\Sigma'_q(H)$ is singular (reducible) then the CR
dimension of $E$ is at least $N-2$ (as it contains
the singularity of $\Sigma'_q(H)$).  As $\Orb_q(E) \subset
\Sigma'_q(H)$, the CR dimension of $E$ is exactly $N-2$ and
$\Orb_q(E)$ is a complex submanifold of dimension $N-2$.  Because
$\Orb_q(E)$ is of maximal dimension, $E$ must be Levi-flat.

Therefore let us assume that the germ $\Sigma'_p(H;U)$ is nonsingular,
and in fact we can assume that the germ $\Sigma'_q(H)$ is nonsingular
as germ at $q$ for all $q \in \Sigma'_p(H;U)$.

As we are assuming $\Orb_p(E)$ is of
maximal dimension, we can pick coordinates vanishing at $p$ as in
Theorem~\ref{nicenormcoord}.  Furthermore if
$E \subset \Sigma'_p(H;U)$, then we can pick coordinates
such that $\Sigma'_p(H) = \{ w''_1 = 0 \}$ as germs at $p$.

If $E \not\subset \Sigma'_p(H;U)$,
then as $\Orb_p(E) \subset \Sigma'_p(H)$ by
Lemma~\ref{lemma:orbinsigma}, we can pick coordinates
such that $\Sigma'_p(H) = \{ w'_1 = 0 \}$ as germs at $p$.

In either case, if $E$ was not Levi-flat, then $E \cap \Sigma'_p(H;U)$
is not Levi-flat.
We now appeal to Lemma~\ref{lemma:gethartogs} to obtain
a Hartogs figure $K$ inside $\Sigma'_p(H;U)$.
To do so,
we may have needed
to perhaps move to yet another point $p' \in E \cap \Sigma'_p(H;U)$.
This move is allowed as we are assuming that $\Sigma'_{p'}(H)$ is
nonsingular.

As $E$ is a connected
real-analytic CR submanifold that is neither
generic Levi-flat nor complex analytic,
we can apply Proposition~\ref{prop:tech}
and assume that $H$ is irreducible at $p$.
We can now
appeal to Lemma~\ref{lemma:folextendH} to obtain
a foliation $\sF$ near $p$.  Next we appeal to
Lemma~\ref{lemma:folandsing} to get a contradiction ($E$ is Levi-flat
though we assumed it was not).
\end{proof}

%%%%%%%%%%%%%%%%%%%%%%%%%%%%%%%%%%%%%%%%%%%%%%%%%%%%%%%%%%%%%%%%%%%%%%%%%%%%

%FIXME: else I don't get links, weird
%\renewcommand\MR[1]{\relax\ifhmode\unskip\spacefactor3000 \space\fi
  %\def\@tempa##1:##2:##3\@nil{%
    %\ifx @##2\@empty##1\else\textbf{##1:}##2\fi}%
  %\href{http://www.ams.org/mathscinet-getitem?mr=#1}{MR \@tempa#1:@:\@nil}}
\def\MR#1{\relax\ifhmode\unskip\spacefactor3000 \space\fi%
  \href{http://www.ams.org/mathscinet-getitem?mr=#1}{MR#1}}

\begin{bibdiv}
\begin{biblist}

\bib{BER:craut}{article}{
      author={Baouendi, M.~Salah},
      author={Ebenfelt, Peter},
      author={Rothschild, Linda~Preiss},
     title = {C{R} automorphisms of real analytic manifolds in complex
              space},
   journal = {Comm.\ Anal.\ Geom.},
  %FJOURNAL = {Communications in Analysis and Geometry},
    volume = {6},
      year = {1998},
    number = {2},
     pages = {291--315},
      %ISSN = {1019-8385},
   %MRCLASS = {32F25 (32C16)},
  review = {\MR{1651418}},
   note = {\href{http://www.arxiv.org/abs/math/9603201}{arXiv:math/9603201}}
}
    %note   = {preprint math.CV/9603201},
    %eprint = {arxiv.org/abs/math.CV/9603201},
  %MRNUMBER = {MR1651418 (99i:32024)},

\bib{BER:book}{book}{
      author={Baouendi, M.~Salah},
      author={Ebenfelt, Peter},
      author={Rothschild, Linda~Preiss},
       title={Real submanifolds in complex space and their mappings},
      series={Princeton Mathematical Series},
   publisher={Princeton University Press},
     address={Princeton, NJ},
        date={1999},
      volume={47},
        ISBN={0-691-00498-6},
      review={\MR{1668103}},
}

\bib{Bedford:flat}{article}{
   author={Bedford, Eric},
   title={Holomorphic continuation of smooth functions over Levi-flat
   hypersurfaces},
   journal={Trans.\ Amer.\ Math.\ Soc.},
   volume={232},
   date={1977},
   pages={323--341},
   issn={0002-9947},
   review={\MR{0481100}},
}

%
%\bib{bedford:boundaries}{article}{
%   author={Bedford, Eric},
%   title={Levi flat hypersurfaces in ${\bf C}\sp{2}$ with prescribed
%   boundary: stability},
%   journal={Ann.\ Scuola Norm.\ Sup.\ Pisa Cl.\ Sci.\ (4)},
%   volume={9},
%   date={1982},
%   number={4},
%   pages={529--570},
%   issn={0391-173X},
%   review={\MR{0693779}},
%}

%\bib{BG:envhol}{article}{
%      author={Bedford, Eric},
%      author={Gaveau, Bernard},
%       title={Envelopes of holomorphy of certain {$2$}-spheres in {${\bf
%  C}\sp{2}$}},
%        date={1983},
%        ISSN={0002-9327},
%     journal={Amer.\ J.\ Math.},
%      volume={105},
%      number={4},
%       pages={975\ndash 1009},
%      review={\MR{0708370}},
%}

\bib{BM:semisub}{article}{
      author={Bierstone, Edward},
      author={Milman, Pierre~D.},
       title={Semianalytic and subanalytic sets},
        date={1988},
        ISSN={0073-8301},
     journal={Inst.\ Hautes \'Etudes Sci.\ Publ.\ Math.},
      number={67},
       pages={5\ndash 42},
      review={\MR{0972342}},
}

\bib{Brunella:lf}{article}{
   author={Brunella, Marco},
   title={Singular Levi-flat hypersurfaces and codimension one foliations},
   journal={Ann.\ Sc.\ Norm.\ Super.\ Pisa Cl.\ Sci.\ (5)},
   volume={6},
   date={2007},
   number={4},
   pages={661--672},
   issn={0391-173X},
   review={\MR{2394414}},
   note = {\href{http://www.arxiv.org/abs/math/0701607}{arXiv:math/0701607}}
   %review={\MR{2394414 (2009c:32065)}},
}

\bib{burnsgong:flat}{article}{
      author={Burns, Daniel},
      author={Gong, Xianghong},
       title={Singular {L}evi-flat real analytic hypersurfaces},
        date={1999},
        ISSN={0002-9327},
     journal={Amer.\ J.\ Math.},
      volume={121},
      number={1},
       pages={23\ndash 53},
      review={\MR{1704996}},
}

\bib{CamachoNeto:book}{book}{
    AUTHOR = {Camacho, C{\'e}sar},
    AUTHOR = {Lins Neto, Alcides},
     TITLE = {Geometric theory of foliations},
      NOTE = {Translated from the Portuguese by Sue E. Goodman},
 PUBLISHER = {Birkh\"auser Boston Inc.},
   ADDRESS = {Boston, MA},
      YEAR = {1985},
    PAGES = {vi+205},
      ISBN = {0-8176-3139-9},
    review = {\MR{0824240}},
}

%\bib{CaoShaw:nolf3}{article}{
%   author={Cao, Jianguo},
%   author={Shaw, Mei-Chi},
%   title={The $\overline\partial$-Cauchy problem and nonexistence of
%   Lipschitz Levi-flat hypersurfaces in $\C P\sp n$ with $n\ge3$},
%   journal={Math. Z.},
%   volume={256},
%   date={2007},
%   number={1},
%   pages={175--192},
%   issn={0025-5874},
%   review={\MR{2282264}},
%}

\bib{CerveauLinsNeto}{article}{
   author={Cerveau, D.},
   author={Lins Neto, A.},
   title={Local Levi-flat hypersurfaces invariants by a codimension one
   holomorphic foliation},
   journal={Amer.\ J.\ Math.},
   volume={133},
   date={2011},
   number={3},
   pages={677--716},
   issn={0002-9327},
   %review={\MR{2808329 (2012e:32052)}},
   review={\MR{2808329}},
   doi={10.1353/ajm.2011.0018},
}

\bib{DAngelo:CR}{book}{
      author={D'Angelo, John~P.},
       title={Several complex variables and the geometry of real
  hypersurfaces},
      series={Studies in Advanced Mathematics},
   publisher={CRC Press},
     address={Boca Raton, FL},
        date={1993},
        ISBN={0-8493-8272-6},
      review={\MR{1224231}},
}

%\bib{DL:families}{article}{
%      author={D'Angelo, John~P.},
%      author={Lebl, Ji{\v r}\'{\i}}},
%       title={On the complexity of proper mappings between balls},
%      journal={Complex Var.\ Elliptic Equ.},
%        volume={52},
%       number={2--3},
%	pages={187--204},
%	year={2009},
%       note = {\href{http://arxiv.org/abs/0802.1739}{arXiv:0802.1739}}
%}

\bib{DF:realbnd}{article}{
   author={Diederich, Klas},
   author={Forn\ae ss, John E.},
   title={Pseudoconvex domains with real-analytic boundary},
   journal={Ann.\ Math.\ (2)},
   volume={107},
   date={1978},
   number={2},
   pages={371--384},
   review={\MR{0477153}},
}

%\bib{DTZ:CRmath}{article}{
%   author={Dolbeault, Pierre},
%   author={Tomassini, Giuseppe},
%   author={Zaitsev, Dmitri},
%   title={On boundaries of {L}evi-flat hypersurfaces in ${\mathbb{C}}^n$},
%   %language={English, with English and French summaries},
%   journal={C.\ R.\ Math.\ Acad.\ Sci.\ Paris},
%   volume={341},
%   date={2005},
%   number={6},
%   pages={343--348},
%   issn={1631-073X},
%   review={\MR{2169149}},
%}

%\bib{DTZ:boundfull}{unpublished}{
      %author={Dolbeault, Pierre},
      %author={Tomassini, Giuseppe},
      %author={Zaitsev, Dmitri},
       %title={On boundaries of {L}evi-flat hypersurfaces in
  %{${\mathbb{C}}^n$}},
        %note={preprint},
%}

\bib{FernPer:norm}{article}{
   author={Fern{\'a}ndez-P{\'e}rez, Arturo},
   title={On normal forms of singular Levi-flat real analytic hypersurfaces},
   journal={Bull.\ Braz.\ Math.\ Soc.\ (N.S.)},
   volume={42},
   date={2011},
   number={1},
   pages={75--85},
   issn={1678-7544},
   %review={\MR{2774175 (2012d:32047)}},
   review={\MR{2774175}},
   doi={10.1007/s00574-011-0004-9},
   note = {\href{http://arxiv.org/abs/1003.4999}{arXiv:1003.4999}}
}

\bib{JarnickiPflug:book}{book}{
   author={Jarnicki, Marek},
   author={Pflug, Peter},
   title={Extension of holomorphic functions},
   series={de Gruyter Expositions in Mathematics},
   volume={34},
   publisher={Walter de Gruyter \& Co.},
   place={Berlin},
   date={2000},
   pages={x+487},
   isbn={3-11-015363-7},
   %review={\MR{1797263 (2001k:32017)}},
   review={\MR{1797263}},
}

%\bib{jou:pfaff}{book}{
%   author={Jouanolou, J. P.},
%   title={\'Equations de Pfaff alg\'ebriques},
%   language={French},
%   series={Lecture Notes in Mathematics},
%   volume={708},
%   publisher={Springer},
%   place={Berlin},
%   date={1979},
%   pages={v+255},
%   isbn={3-540-09239-0},
%   review={\MR{537038}},
%}

\bib{kohn:subell}{article}{
   author={Kohn, J. J.},
   title={Subellipticity of the $\bar \partial $-Neumann problem on
   pseudo-convex domains: sufficient conditions},
   journal={Acta Math.},
   volume={142},
   date={1979},
   number={1-2},
   pages={79--122},
   issn={0001-5962},
   review={\MR{512213}},
}

\bib{Krantz:book}{book}{
   author={Krantz, Steven G.},
   title={Function theory of several complex variables},
   note={Reprint of the 1992 edition},
   publisher={AMS Chelsea Publishing, Providence, RI},
   date={2001},
   pages={xvi+564},
   isbn={0-8218-2724-3},
   %review={\MR{1846625 (2002e:32001)}},
   review={\MR{1846625}},
}

%\bib{Lai:immerse}{article}{
%   author={Lai, Hon Fei},
%   title={Characteristic classes of real manifolds immersed in complex
%   manifolds},
%   journal={Trans.\ Amer.\ Math.\ Soc.},
%   volume={172},
%   date={1972},
%   pages={1--33},
%   issn={0002-9947},
%   review={\MR{0314066}},
%}

%\bib{Lebl:ext}{article}{
%    AUTHOR = {Lebl, Ji{\v r}\'{\i}},
%     TITLE = {Extension of {L}evi-flat hypersurfaces past {C}{R} boundaries},
%   JOURNAL = {Indiana Univ.\ Math.\ J.},
%  %FJOURNAL = {Indiana University Mathematics Journal},
%      status = {to appear},
%	note = {preprint \href{http://www.arxiv.org/abs/math.CV/0612071}{arXiv:math/0612071}}
%      %eprint = {http://www.arxiv.org/abs/math.CV/0612071},
%}

\bib{Lebl:lfnm}{article}{
      author={Lebl, Ji{\v r}\'{\i}},
       title={Nowhere minimal {C}{R} submanifolds and {L}evi-flat
  hypersurfaces},
     journal={J.\ Geom.\ Anal.},
      pages = {321--342},
     volume = {17},
       year = {2007},
     number = {2},
	review={\MR{2320166}},
	note = {\href{http://www.arxiv.org/abs/math.CV/0606141}{arXiv:math/0606141}}
%        %eprint={http://www.arxiv.org/abs/math.CV/0606141},
}

\bib{Lebl:thesis}{thesis}{
       author={Lebl, Ji{\v r}\'{\i}},
        title={Singularities and complexity in {C}{R} geometry},
        date = {Spring 2007},
        type = {Ph.D.\ Thesis},
organization = {University of California at San Diego}
}

\bib{Lebl:projlf}{article}{
       author={Lebl, Ji{\v r}\'{\i}},
   title={Algebraic Levi-flat hypervarieties in complex projective space},
   note={\href{http://arxiv.org/abs/0805.1763}{arXiv:0805.1763}},
   %note={arXiv:0805.1763},
      journal={J.\ Geom.\ Anal.},
      volume={22},
       year={2012},
     pages={410--432},
}

\bib{linsneto:note}{article}{
   author={Lins Neto, Alcides},
   title={A note on projective Levi flats and minimal sets of algebraic
   foliations},
   %language={English, with English and French summaries},
   journal={Ann.\ Inst.\ Fourier (Grenoble)},
   volume={49},
   date={1999},
   number={4},
   pages={1369--1385},
   issn={0373-0956},
   review={\MR{1703092}},
}

\bib{Siu:extbook}{book}{
   author={Siu, Yum Tong},
   title={Techniques of extension of analytic objects},
   note={Lecture Notes in Pure and Applied Mathematics, Vol. 8},
   publisher={Marcel Dekker Inc.},
   place={New York},
   date={1974},
   pages={iv+256},
   %review={\MR{0361154 (50 \#13600)}},
   review={\MR{0361154}},
}

\bib{Whitney:book}{book}{
   author={Whitney, Hassler},
   title={Complex analytic varieties},
   publisher={Addison-Wesley Publishing Co., Reading, Mass.-London-Don
   Mills, Ont.},
   date={1972},
   pages={xii+399},
   review={\MR{0387634}},
}

\end{biblist}
\end{bibdiv}

\end{document}